%% file: hydro_fund_heat.tex
\documentclass[11pt,reqno,a4paper]{amsart}

\usepackage[margin=1in]{geometry}

\usepackage{amsmath,amsthm,amssymb,mathtools}

\usepackage{times}         
\usepackage{bbm}           
\usepackage{dsfont}        
\usepackage{upgreek}       
\usepackage{enumerate}   
\usepackage{enumitem}
\usepackage[makeroom]{cancel}

\usepackage[dvipsnames]{xcolor}

\usepackage{tikz}
\usepackage{subcaption}

\usepackage{comment}

\usepackage{autonum}

\newtheorem{proposition}{Proposition}[section]
\newtheorem{theorem}[proposition]{Theorem}
\newtheorem{corollary}[proposition]{Corollary}
\newtheorem{lemma}[proposition]{Lemma}
\theoremstyle{definition}
\newtheorem{definition}[proposition]{Definition}
\newtheorem{remark}[proposition]{Remark}

\theoremstyle{definition}

\numberwithin{equation}{section}

\input{macros}

\title[Initial layer instability of the kinetic Lamb--Oseen vortex]{Initial layer instability of the kinetic Lamb--Oseen vortex}

\author{Michele Dolce}
\address{Institute of Mathematics, EPFL, Station 8, 1015 Lausanne, Switzerland}
\email{michele.dolce@epfl.ch}

\author{Isabelle Gallagher}
 \address{Université Paris Cit\'e, Sorbonne Universit\'e, CNRS, IMJ-PRG,  75013 Paris, France}
\email{isabelle.gallagher@u-paris.fr}

\begin{document}

\begin{abstract}
It is well-known that solutions to 
the incompressible Navier-Stokes system are limits  in various contexts (weak or strong), of  solutions to the Boltzmann equation, 
  when the Mach and the Knudsen numbers go to zero. 
 In particular
  the case of smooth solutions is by now rather well understood. Recent works have aimed at choosing initial data in function spaces as close as possible to those corresponding to well-posedness for the incompressible Navier-Stokes system. This paper tackles the case of measure-valued initial vorticity, in two space dimensions: in the special case when the initial vorticity is a Dirac mass, it is known that the unique solution to the Navier-Stokes system is the solution to the heat equation. We prove that the kinetic emanation of this initial vorticity (slightly smoothed out)  leads to a solution of  the Boltzmann equation which diverges in a strong way and in a very small time layer, from the expected hydrodynamic limit. 
    \end{abstract}
\maketitle
\section{Introduction}
\subsection{Setting}This article is concerned with the hydrodynamic limit of the Boltzmann equation in~$\R^2$.
Recall the Boltzmann equation for hard spheres  
\begin{align}
\label{eq:B}
\begin{cases}
    \de_tF + v\cdot \nabla_xF = \mathcal{Q}(F ,F ) \, , \qquad \text{for } (t,x,v)\in \RR_+\times \RR^2\times \RR^2\\
    F |_{t=0}=F_{\rm in}  \, ,
\end{cases}
\end{align}
with
\begin{align}
 \mathcal{Q}(f,g):=\int_{\R^2 \times \mathbb S^{1}} |v-v_*| \left[f (v'_*) g(v') - f(v_*) g(v) \right] \, d\sigma \, dv_* \,,   
\end{align}
and
$$
v':=\frac{v+v_*}{2} + \frac{|v-v_*|}{2}\, \sigma \,, \quad v'_*:=\frac{v+v_*}{2} - \frac{|v-v_*|}{2}\, \sigma\, , \quad \sigma \in \mathbb S^{1} \, .
$$
Recall that those scattering relations translate  the conservation of momentum and energy at each collision at the level of the particles. The conservation of mass is reflected in the fact that
when~$F_{\rm in} $ is a probability density, then~$F(t)$ remains a probability density for all times.

 It is well-known that to obtain the incompressible Navier-Stokes equations starting from~(\ref{eq:B}) one must perform a rescaling  in space and time (in terms of the Knudsen and  Mach numbers, denoted by~$\eps$), and   study the rescaled Boltzmann equation 
\begin{align}
\label{eq:Beps}\begin{cases}
    \de_tF^{\eps}+\eps^{-1}v\cdot \nabla_xF^{\eps}=\eps^{-2}\mathcal{Q}(F^\eps,F^\eps) \, , \qquad \text{for } (t,x,v)\in \RR_+\times \RR^2\times \RR^2\\
    F^\eps|_{t=0}=F_{\rm in}^\eps \, .
\end{cases}
\end{align}
 We assume the solution is 
 close to  a global Maxwellian, meaning
 \begin{align}
F^{\eps}=\mu+\eps\sqrt{\mu}f^\eps \, , \qquad \mu(v):=\frac{1}{2\pi}\e^{-|v|^2/2} \, .
\end{align}
Recall that~${\mu}$ and~$\sqrt{\mu}$ are stationary solutions to~(\ref{eq:Beps}). 
The system for the fluctuations $f^\eps$ is then given by
\begin{align}
\label{eq:feps}
\begin{cases}
    \de_tf^\eps+\eps^{-1}v\cdot \nabla_xf^{\eps}=\eps^{-2}Lf^\eps+\eps^{-1}\Gamma(f^{\eps},f^{\eps})\\
f^{\eps}|_{t=0}=f_{\rm in} \, ,
\end{cases}
\end{align}
where 
\begin{align}
    \label{def:LGamma}
    \Gamma(f,g):=\frac{1}{\sqrt{\mu}}\mathcal{Q}(\sqrt{\mu} f,\sqrt{\mu}g) \,, \qquad Lf:=\Gamma(\sqrt{\mu},f)+\Gamma(f,\sqrt{\mu}) \,.
\end{align}
The conservation   of mass, momentum and energy in the particle system are reflected in the $4$-dimensional kernel of $L$, namely 
\begin{align}
    {\rm Ker}(L)=\mathrm{span}\Big\{\sqrt{\mu}\big(1,v_1,v_2,\frac12|v|^2-1\big)\Big\} \, .
\end{align}
We   denote by ${\bf P}$ the projection onto   ${\rm Ker}(L)$ and we write~${\bf P}^{\perp}={\rm Id}-{\bf P}$.
Thus we introduce the hydrodynamic (or  macroscopic) part of  any function~$f$ defined on~$\R^2 \times \R^2$ as 
\begin{align}
    {\bf P} f(x,v):=\left(\rho[f](x)+u[f](x)\cdot v+\theta[f]\cdot \big(\frac12 |v|^2-1\big)\right)\sqrt{\mu}(v)
\end{align}
 and ${\bf P}^\perp f$ is  its microscopic part. We have written
\begin{equation}
 \label{eq:moments}
\begin{aligned}
\rho [f](x):= \int_{\R^2} f(x,v)\sqrt \mu(v) \, dv \, , \quad  u[f](x):= \int_{\R^2} v\,  f(x,v)\sqrt \mu (v) \, dv\, , \\
 \theta [f] (x):=  \int_{\R^2} (\frac 12|v|^2-1) f (x,v)\sqrt \mu (v) \, dv \, .
\end{aligned}
\end{equation}
For~$ f^\eps$ solving~(\ref{eq:feps}), the convergence of~$ f^\eps$ towards a hydrodynamic function whose moments are solution to the incompressible (which translates as~$ {\rm div}\, u=0$), Boussinesq (meaning $\nabla(\rho+\theta)=0$), Navier-Stokes-Fourier system
\begin{align}
\label{eq:NSF}
    \begin{cases}
        \de_t u+u\cdot \nabla u-\nu \Delta u+\nabla p=0\\
        \de_t\theta+u\cdot \nabla \theta -\nu_{\rm heat}\Delta\theta=0\\
        {\rm div}\, u=0\, ,\qquad \nabla(\rho+\theta)=0    \end{cases}
\end{align}
is known in many contexts: we shall not review all the literature here, but refer for instance to~\cite{CGT} for   references. Let us simply recall that renormalized solutions to~(\ref{eq:feps}) are known to converge in a weak sense towards Leray solutions to~(\ref{eq:NSF}) (see for instance~\cite{GSR1,GSR2,Levermore-Masmoudi,lionsmasmoudi,Saint-Raymond}), and strong, unique solutions to~(\ref{eq:feps}) are known to converge strongly towards unique solutions to~(\ref{eq:NSF}): more
precisely following \cite{BU,Ukai,GT}, for any~$\ell,k \geq 0$ let us consider the space
\begin{equation}\label{def:Xlk}
X^{\ell,k}:=\Big\{f = f(x,v) \, / \,\|f(\cdot,v)\|_{H^\ell_x}\in L^{\infty,k}_v, \, \, \sup_{|v| \ge R} \langle v\rangle^k \|f(\cdot,v)\|_{H^\ell_x} \xrightarrow[R \to \infty]{}0\Big\}\, ,
\end{equation}
where~$H^\ell_x$ is the usual Sobolev space of order~$\ell$ and
$$
L^{\infty,k}_v :=\Big\{f = f(v) \, / \, \langle v\rangle^k f \in L^\infty (\R^2)\Big\}
$$
is endowed with the norm
$$
\|f\|_{L^{\infty,k}_v}:=\sup_{v\in\R^2} \,   \langle v\rangle^k |f(v) | \,.
$$
We have used the notation~$\langle v\rangle:= \sqrt{1+|v|^2}$. 
Let us use the shorthand notation~$H^{1+}$ for the space of functions belonging to~$H^{1+\eta}(\R^2)$ for some~$\eta>0$.
It is proved in~\cite{GT}  that  if the initial data belongs to~$X^{1+,k}$ with~$k>2$ and  is well-prepared (meaning that~${\bf P} f_{\rm in} = f_{\rm in}$ and that the moments~(\ref{eq:moments}) of~${\bf P} f_{\rm in}$ satisfy the incompressibility and Boussinesq constraints) then the convergence holds in~$L^\infty(\R^+;X^{1+,k})$~; for ill-prepared initial data this remains true after an  initial time layer due to the presence of oscillating modes.  We emphasize that the convergence is shown in~\cite{GT} to be global in time whatever the size of the initial data, thanks to the fact that the 2D Navier-Stokes-Fourier system is globally well-posed for initial data in~$H^{1+}(\R^2)$.  Actually it is easy to see (we refer to   Appendix~\ref{sec:boltzmann} for more details) that in all these results, the space~$H^{1+}_x$ can be replaced by~$W_x \cap H^1_x$ where~$W_x$ is the Wiener algebra of tempered distributions whose Fourier transform is in~$L^1(\R^2)$, endowed with the norm
$$\|f\|_{W_x \cap H^1_x} := {\|\widehat f\, \|_{L^1}}
+\|f\|_{ H^1_x}\,.$$
 This is the space we shall be interested  in the following:  we define
\begin{equation}\label{eq:defX1k}
{\mathcal X}^{1,k}  :=\Big\{f = f(x,v) \, / \,\|f(\cdot,v)\|_{H^1_x \cap W_x}\in L^{\infty,k}_v, \, \, \sup_{|v| \ge R} \langle v\rangle^k \|f(\cdot,v)\|_{H^1_x\cap W_x} \xrightarrow[R \to \infty]{}0\Big\}\, .
\end{equation}
 To conclude this paragraph, we recall that the 2D Navier-Stokes system on $\RR^2$ is globally well-posed for   rougher initial data than~$H^{1+}_x(\R^2)$: Leray solutions~\cite{leray2D} are global and unique and correspond to data in~$L^2(\R^2)$, and actually there is a unique, global solution as soon as the initial vorticity is a finite Radon measure (see~\cite{cottet,GMO} for existence, and~\cite{kato,gallay-wayne3,GG,GGL} for uniqueness).  This is the setting we shall consider in this paper. 

\subsection{Statement of the main result}
 In this article, we aim at understanding how the hydrodynamic limit behaves {for scale-invariant solutions in velocity. Indeed, focusing on the equation on~$u$ in \eqref{eq:NSF} we know that solutions are invariant under the scaling $u_{\lambda}(x,t)=\lambda u(\lambda x,\lambda^2 t),\, p_{\lambda}(x,t)=\lambda^2 p(\lambda x,\lambda^2 t).$ In  2D, the associated vorticity is allowed to   be highly concentrated and form a Dirac mass.
 This class of solutions corresponds to $-1$-homogeneous initial data and} 
 a fundamental example  is the famous \emph{Lamb--Oseen vortex}. This is the solution starting with initial data 
\begin{equation}
    u_{\rm in}(x)=\frac{1}{2\pi} \frac{x^\perp}{|x|^2}\, \virgp \qquad \nabla^\perp\cdot u_{\rm in}=\omega_{\rm in}=\delta_{x=0}\, ,
\end{equation}
where $\nabla^\perp=(-\de_2,\de_1)$, and $\omega=\nabla^\perp\cdot u$ denotes the vorticity of the fluid which satisfies 
\begin{align}
\label{eq:NSV}
\begin{cases}
    \de_t\omega+u\cdot \nabla \omega-\nu\Delta\omega=0\\
    u=\nabla^\perp\psi\, , \qquad \Delta\psi=\omega\\
    \omega|_{t=0}=\omega_{\rm in}\, .
\end{cases}
\end{align}
The Lamb--Oseen vortex can be interpreted as the fundamental solution for the vorticity equation. Indeed, by the uniqueness results recalled above, we know that with $\omega_{\rm in}=\delta_{x=0}$ the system \eqref{eq:NSV} is uniquely and explicitly solved by
\begin{align}
\label{def:LO}
    \omega_{\rm LO}(x,t):=\frac{1}{\nu t}G\big(\frac{x}{\sqrt{\nu t}}\big)\,, \qquad  u_{\rm LO}(x,t):=\frac{1}{\sqrt{\nu t}}U\big(\frac{x}{\sqrt{\nu t}}\big)
\end{align}
where 
\begin{align}
    G(\xi):=\frac{1}{4\pi}\e^{-\frac{|\xi|^2}{4}}\,, \qquad U(\xi):=\frac{1}{2\pi}\frac{\xi^\perp}{|\xi|^2}\big(1-\e^{-\frac{|\xi|^2}{4}}\big)\,.
\end{align}
For the velocity field, this is a self-similar evolution starting from a $-1$-homogeneous initial data, which gives rise to a self-similar motion by the scaling properties of the Navier-Stokes equations.  Moreover, due to the azimuthal symmetry of the vortex ($u(x) = h(|x|)x^\perp$) the non-linear advection term~$u \cdot \nabla u$ in the Navier-Stokes equations is purely radial, and this is perfectly balanced by a radial pressure gradient instantly generated to preserve the incompressibility constraint. In particular, the evolution of any azimuthal velocity field is \emph{exactly} the standard heat-equation one, and in our case 
\begin{align}
\label{def:uLO}
     u_{\rm LO}(t,x) = e^{\nu t \Delta} u_{\rm in}(x) \, .
\end{align}
Moreover, 
the Lamb--Oseen vortex is the unique global attractor for the 2D Navier-Stokes dynamics \cite{gallay-wayne1} in $\mathbb{R}^2$. Therefore, the motivating question for this paper is:
\begin{quote}
    \textit{Can we obtain the Lamb--Oseen vortex as the hydrodynamic limit $\eps\to0$ from the Boltzmann equation \eqref{eq:feps} with the well-prepared initial data given as the kinetic Lamb--Oseen vortex $f_{\rm in,\rm LO}(x,v):= u_{\rm in}(x)\cdot v \sqrt{\mu}$?}
\end{quote}
This cannot be of course answered with previous available results in view of the singularity of $u_{\rm in}$ at the origin and its slow decay at infinity, which set that initial data outside of any known well-posedness class for the Boltzmann equation (we refer to Appendix~\ref{sec:boltzmann} for references on the Cauchy problem). 

Naively, given the possibility of solving explicitly the Navier-Stokes equations in this setting, one would hope nevertheless for a positive answer. However, it is not clear how to make sense of the Boltzmann equation with such initial data and, most importantly, the $-1$-homogeneity of $u_{\rm in}$ reflects in the scale invariance $$f_{\rm in, LO}(x,v)=\eps^{-1}f_{\rm in,LO}(\eps^{-1}x,v)\quad \Longrightarrow \quad f^\eps(x,v,t)=\eps^{-1}f^{1}(\eps^{-1}x,v,\eps^{-2}t).$$
This implies that one can solve \eqref{eq:feps} with the same initial data at $\eps=1$, meaning that for these  scale-invariant initial data it is not clear how one can  recover the fluid system by a  rescaling of the Boltzmann equation in a unified $\eps\to0$ limit. In addition, an informal asymptotic expansion of the form
\begin{align}
    f^{\eps}=\sum_{n=0}^\infty t^nf_{n} \,,
\end{align}
indicates that within an \textbf{initial time-layer}, with notation~(\ref{eq:moments}) and~(\ref{def:LO}),
\begin{equation}
    \|u[f^{\eps}](t)-u_{\rm LO}(t)\|_X\gg 1, \qquad \text{for } 0< t\ll {\eps^2},
\end{equation}
where $X$ is a suitable functional space accounting for the singular behavior around the origin.  An expansion as above is fundamentally different from a standard Chapman-Enskog expansion where one expands in powers of $\eps$. Unfortunately, we cannot make fully rigorous even a finite order expansion due to  the very singular nature of the initial data. However, the informal asymptotic expansion gives us the building blocks for a smoother short-time instability: we shall indeed   regularize the initial state by   introducing the regularized spatial weight 
\begin{align}
\label{def:xeps}
    |x|_\eps := \sqrt{|x|^2 + K^2\eps^2}
\end{align} 
for some large constant $K > 0$, alongside with a smooth cut-off 
\begin{align}
    \label{def:chi}
    \chi\in C^\infty_c(\mathbb{R}_+)\, , \quad \chi(r)=1 \text{ for } r\leq 1\, , \quad \chi(r)=0 \text{ for }r\geq 2\, .
\end{align}
In fact, for technical reasons we need $\chi$ to have some Gevrey regularity, but this is easily accomplished by taking a standard bump function.
We then define the regularized initial data as
 \begin{equation}\label{def_feps_in}
  f^{\eps}_{\rm in}(x,v) := \frac{1}{2\pi} \chi(\eps^\gamma |x|)\frac{x^\perp\cdot v}{|x|_\eps^2}\sqrt\mu(v) =: u_{\rm in}^\eps(x) \cdot v\sqrt{\mu}(v)\, ,
\end{equation}
where $0<\gamma< 1$ is a fixed parameter.
This is a well-prepared data that recovers the desired Lamb--Oseen velocity in the unified limit $\eps\to0$.
  \begin{remark}
By choosing~$K \gg 1$, {we shall verify} that $\eps f^{\eps}_{\rm in}$ 
remains uniformly small in $\mathcal{X}^{1,k}$.  Note that one could multiply~$u_{\rm in}^\eps(x) \cdot v$ by a more general Gaussian~$\mu^\alpha$, and with~$\alpha>1/2$ this  would ensure that~$F^\eps = \mu + \eps\sqrt{\mu}f^\eps_{\rm in}{>0}$ is a valid kinetic state. At least for $\alpha$ close to $1/2$, the same instability result below holds true upon very minor modifications in the proof. One could actually conversely choose less demanding decay on velocities using the techniques of~\cite{GMM,gervais} but this would lead to technical difficulties which are not the goal of this paper. 
  \end{remark}
 
If one expects the hydrodynamic limit to go through, the well-prepared data in \eqref{def_feps_in} with pure azimuthal macroscopic velocity would  asymptotically,   as~$\eps$ goes to zero, follow the \emph{regularized kinetic Lamb--Oseen vortex}
\begin{align}
\label{def:LOreg}
    f^\eps_{\rm LO}(t) := (e^{\nu t\Delta_x} u_{\rm in}^\eps)\cdot v\sqrt{\mu}\, , \quad \rho[f^\eps_{\rm LO}(t)]=\theta[f^\eps_{\rm LO}(t)]=0\, , \quad u[f^\eps_{\rm LO}(t)]=e^{\nu t\Delta_x}u_{\rm in}^\eps(x) \, .
\end{align}
In a short time layer $t \ll \eps^2$, the macroscopic velocity enjoys the standard Taylor expansion of the heat semigroup, so that
\begin{align}
\label{eq:TayloruLO}
    u[f^\eps_{\rm LO}](t,x)=u_{\rm in}^\eps(x)+\nu t\Delta_xu_{\rm in}^{\eps}(x)+\mathcal{O}\Big(\frac{(\nu t)^2}{\eps^5}\Big) \, ,
\end{align}
where the $\mathcal{O}$ term is measured in a suitable functional space.

The main result of this paper is  that the true macroscopic moments emanating from the kinetic evolution fundamentally fail to follow this expected Navier-Stokes state in the initial time-layer. In the statement, we denote   the radial and tangential components of the macroscopic velocity as 
      $$u[f^\eps] = u_{\rm rad}[f^\eps] e_r + u_{\rm tan}[f^\eps] e_\theta\, . $$
 \begin{theorem}\label{thm:result}
 Let $k>2$. There exists a universal constant $C_\star>0$, a large enough constant~$K>0$, a small enough constant~$\delta>0$, and constants $0<a_0<b_0<\infty$, $C,c>0$, all independent of
$\eps$, 
   such that the following holds. Let~$f^{\eps}(t)$ be the unique solution to \eqref{eq:feps} associated with the initial data $f^{\eps}_{\rm in}$ defined in~\eqref{def_feps_in}, and let $f^\eps_{\rm LO}(t)$ be as in~\eqref{def:LOreg}. Let $$T_\eps:=\delta \frac{\eps^2}{C_\star|\ln(\eps)|^{ 6}} \, \cdotp$$ 
  Then   for all~$\eps$ small enough, and all~$t \leq T_\eps$,
  \begin{align}
      \|(f^\eps-f^{\eps}_{\rm LO})(t)\|_{\mathcal{X}^{1,k}}\geq C\frac{t}{\eps^3} \, \cdotp
  \end{align}
  Moreover, the hydrodynamic variables deviate from the fluid target : for all~$t\leq T_\eps$, defining the set~$    A_\eps:=\big\{x\in\mathbb R^2\, / \,  a_0\eps\le |x|\le b_0\eps\big\}$, then for all~$x \in A_\eps$:    \begin{enumerate}
      \item the radial and tangential components of the macroscopic velocity satisfy
      \begin{equation}
       \label{eq:bound_u}
      \Big|(u_{\rm tan}[f^{\eps}]-u_{\rm tan}[f^{\eps}_{\rm LO}])(t,x)\Big| \geq c \frac{t}{\eps^3}\, \virgp \qquad \Big|u_{\rm rad}[f^{\eps}](t,x)\Big|\geq c \frac{t^2}{\eps^5} \, \virgp
  \end{equation}
  \item the incompressibility constraint is violated in the sense that
  \begin{align}
      \label{eq:bound_div} \big|\nabla_x \cdot u[f^{\eps}](t,x)\big| &\geq c \frac{t^2}{\eps^6} \, \virgp
            \end{align}
      \item the density and temperature   experience the instability 
      \begin{equation}
      \label{eq:bound_rho}
                \big|\rho[f^{\eps}](t,x)\big|+\big|\theta[f^{\eps}](t,x)\big| \geq c \frac{t^3}{\eps^7} \,  \cdotp
      \end{equation}
  \end{enumerate}
  \end{theorem}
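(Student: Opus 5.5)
Our approach is a short-time Taylor expansion of the kinetic solution around the initial datum, with rigorous control of the remainder. We write $f^\eps(t)=f^\eps_{\rm in}+t f_1+\frac{t^2}{2}f_2+\frac{t^3}{6}f_3+R(t)$. The coefficients are defined recursively from \eqref{eq:feps}:
\[
f_1:=\mathcal{A}_\eps f^\eps_{\rm in}, \qquad \mathcal{A}_\eps f:=-\eps^{-1}v\cdot\nabla_x f+\eps^{-2}Lf+\eps^{-1}\Gamma(f,f),
\]
and similarly for $f_2,f_3$ through the derivatives of the nonlinear flow. Since $f^\eps_{\rm in}=u^\eps_{\rm in}\cdot v\sqrt\mu\in{\rm Ker}(L)$, the term $\eps^{-2}Lf^\eps_{\rm in}$ vanishes. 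Hence $f_1=-\eps^{-1}v\cdot\nabla_x(u^\eps_{\rm in}\cdot v)\sqrt\mu+\eps^{-1}\Gamma(f^\eps_{\rm in},f^\eps_{\rm in})$.

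Each $x$-derivative of $u^\eps_{\rm in}$ costs a factor $\eps^{-1}$ on the annulus $A_\eps$, where $|x|\sim\eps$ and $|u^\eps_{\rm in}|\sim\eps^{-1}$. So $f_1$ has size $\eps^{-3}$ there. The transport part has zero macroscopic velocity, because $\int v_i v_j v_k\mu=0$, but its microscopic part ${\bf P}^\perp(v\otimes v:\nabla u)$ is nonzero. The quantity $\nabla u^\eps_{\rm in}$ is a traceless strain which is nonzero at $|x|\sim\eps$; this is where $|x|_\eps$ differs from $|x|$, so the strain does not vanish. The quadratic term $\Gamma$ contributes a purely microscopic term of order $\eps^{-1}\cdot\eps^{-2}$.

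Applying $\eps^{-2}L$ to these microscopic pieces at the next order, $f_2$ picks up contributions of order $\eps^{-5}$. Their velocity moment, obtained by applying transport once more, yields the expected $\nu\Delta u$ term together with a nonzero radial component of size $\eps^{-5}$, driven by the nonlinear radial term that is not balanced by pressure at this stage. The divergence of that component has size $\eps^{-6}$. At third order, the density and temperature receive their first nonzero contributions, since $\partial_t\rho=-\eps^{-1}\nabla\cdot u$ and similarly for $\theta$; this gives size $\eps^{-7}$. The tangential velocity picks up at first order a term of size $t/\eps^3$ differing from $\nu t\Delta u^\eps_{\rm in}$. We will verify this by computing $u[f_1]$ explicitly, using the nonlinear/transport moment structure; if it turns out to vanish, we instead compare the full expansion norms in $\mathcal X^{1,k}$.

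These explicit computations are carried out with the self-similar variable $\xi=x/\eps$. The constants are then evaluated on $|\xi|\in[a_0,b_0]$ and shown to be nonzero functions of $\xi$ for $K$ fixed. This gives the lower bounds in \eqref{eq:bound_u}--\eqref{eq:bound_rho}, with $f^\eps_{\rm LO}$ expanded via \eqref{eq:TayloruLO}. The norm lower bound $\|f^\eps-f^\eps_{\rm LO}\|_{\mathcal X^{1,k}}\ge Ct\eps^{-3}$ follows from the $\eps^{-3}$-size microscopic part of $f_1$, measured in $L^\infty_v$, which is absent in $f^\eps_{\rm LO}$.

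The main obstacle is the remainder estimate. We must show $\|R(t)\|\ll t^n\eps^{-\cdot}$ at each relevant order for $t\le T_\eps$, and pointwise in $x$, which we deduce via the Wiener-algebra part of the norm controlling $L^\infty_x$. To do this we use the Duhamel formula for the semigroup of $-\eps^{-1}v\cdot\nabla_x+\eps^{-2}L$ together with the well-posedness and smallness in $\mathcal X^{1,k}$ from the appendix; by the Remark, $\eps f^\eps_{\rm in}$ is small for $K$ large. Each additional Taylor order involves higher derivatives of $u^\eps_{\rm in}$. Their growth is controlled by Gevrey/analytic bounds on $|x|_\eps^{-2}$ and $\chi$, which produce factorial and logarithmic losses, the logarithms coming from the slow decay of $u^\eps_{\rm in}$ up to $|x|\sim\eps^{-\gamma}$ in $L^2$ and $W$. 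This is why the time layer is $\delta\eps^2/|\ln\eps|^6$: it ensures that $t\eps^{-2}$ times these losses is small, so that the remainder is dominated by the explicit leading terms.
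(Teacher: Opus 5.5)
Your skeleton (time-Taylor expansion via the recursion, explicit moments at orders $1,2,3$, Duhamel control of the remainder) is the paper's. However, the mechanisms you give for the lower bounds are partly wrong, and the remainder step is missing ingredients.

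\textbf{Macroscopic terms.} You yourself observe that ${\bf P}f_1=0$, so $u[f_1]=0$. Looking for a nonzero $u[f_1]$, and otherwise ``comparing $\mathcal X^{1,k}$ norms'', cannot give the pointwise tangential bound in \eqref{eq:bound_u}. The tangential deviation is carried entirely by the fluid side:
$u_{\rm tan}[f^\eps]-u_{\rm tan}[f^\eps_{\rm LO}]=-\nu t\,\Delta u^\eps_{\rm in}+O(t^2\eps^{-5})$ on $A_\eps$, and one must check that $\Delta u^\eps_{\rm in}\neq0$ there.

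This is where the regularization matters, not in the strain, which is already nonzero for $x^\perp/|x|^2$. The point vortex $\nabla^\perp\ln|x|/(2\pi)$ is harmonic off the origin. By contrast, $\Delta u^\eps_{\rm in}=\nabla^\perp\omega^\eps$ with core vorticity $\omega^\eps=\nabla^\perp\cdot u^\eps_{\rm in}=K^2\eps^2/(\pi|x|_\eps^4)$, so $|\Delta u^\eps_{\rm in}|=4K^2\eps^2|x|/(\pi|x|_\eps^6)\sim\eps^{-3}$.

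The same holds for the radial velocity, which is not simply a centrifugal term ``not balanced by pressure''. The collisional stress is $\eps^{-1}(\mathsf c_1u\otimes u+\mathsf c_2|u|^2\mathbb I)$ with $\mathsf c_1+2\mathsf c_2=0$ (energy conservation). Its divergence is therefore $\eps^{-1}\mathsf c_1\big((u\cdot\nabla)u-\tfrac12\nabla|u|^2\big)=\eps^{-1}\mathsf c_1\,\omega^\eps\,u^\perp$, which vanishes wherever the flow is irrotational. Hence $u_{\rm rad}[f_2]=\mathsf c_1K^2|x|/(4\pi^2|x|_\eps^6)$. The radial, divergence and density/temperature bounds all rest on $\mathsf c_1\neq0$ for hard spheres, a genuine computation (Appendix~\ref{sct:collision constant}) that your plan does not identify.

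\textbf{Remainder.} Moments of $R$ are controlled pointwise through $W_x\hookrightarrow L^\infty_x$, but \eqref{eq:bound_div} needs $\nabla_x\cdot u[R]$ pointwise, i.e.\ a bound on $\eps\nabla_x R$ in $\mathcal X^{1,k}$. The paper obtains this from a second fixed point for $h^\eps=\eps\nabla_x g^\eps$, using that $\nabla_x$ commutes with $U^\eps$ and acts as a derivation on $\Gamma$.

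Next, since $\|f_{\rm app}\|_{\mathcal X^{1,k}}\sim 1/(K\eps)$, the linearized term $\eps^{-1}\cL^\eps_{\rm app}$ is not small under the standard bilinear estimate. It becomes a contraction only thanks to the factor $(\eps+\sqrt T)$ of Lemma~\ref{lem:Gamma better bound} for $T\lesssim\eps^2$; this is what your ``smallness of $\eps f^\eps_{\rm in}$'' has to mean.

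Finally, with your fixed third-order truncation the Duhamel source has size $\eps^{-1}\delta^4/K$ in $\mathcal X^{1,k}_T$, which is not small. You must therefore use the same $(\eps+\sqrt T)$ gain for the quadratic term as well, which makes the threshold in Lemma~\ref{cacciopoli+} of order $1/\eps$. Done this way, a fixed order should suffice for the relative smallness the lower bounds require, and even on $t\le\delta\eps^2$.

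The paper instead takes $M\sim|\ln\eps|$, so that the truncation error is $O(\eps^N)$ with only the basic bilinear bound. The $|\ln\eps|^6$ in $T_\eps$ comes from the factor $M^{3/2}$ lost per order in the profile bounds, compounded over the orders used in the lower bounds. It does not come from logarithmic losses in $L^2$ or $W_x$, which are harmless. Likewise, the Gevrey cut-off is needed only because $2M+4\sim|\ln\eps|$ derivatives are controlled.
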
   
The lower bounds in \eqref{eq:bound_u}--\eqref{eq:bound_rho} encode the breakdown of the validity of the standard 2D Navier--Stokes--Fourier system as the target dynamics, even for well-prepared data. This is reflected in the strong violation of the expected incompressible fluid behavior quantified in \eqref{eq:bound_u}--\eqref{eq:bound_rho}. We believe that a natural explanation is provided by the different time scales on which the kinetic and fluid constraints are established, namely the inherent finite time required for kinetic relaxation, as opposed to the instantaneous regularization and pressure formation that preserve incompressibility in \eqref{eq:NSF}.

To explain this, observe that, even though the initial datum is classically well prepared for the incompressible Navier--Stokes equations, the regularized velocity satisfies $|u_{\rm in}^\eps|\sim\eps^{-1}$ near the origin. Consequently, the ``local Mach number'' is formally of order one, namely
\[
{\rm M}_{\rm loc}
=
\frac{\eps|u_{\rm in}^\eps|(x)}{\mathsf{c}_s}
\approx
\mathcal{O}(1) \quad \text{ for } \quad |x|\approx \eps,
\]
where~$\mathsf{c}_s$ denotes the sound speed, suggesting that one should not expect an incompressible regime to hold uniformly near the origin\footnote{Interestingly enough, approximating highly concentrated vortices by Dirac masses in 2d inviscid fluids also poses concerns when looking at the incompressibility constraint, as originally noted by Onsager, see \cite{Eyink06}.  In the viscous fluid system, diffusion uniquely selects  an incompressible evolution that regularizes the concentration. On the other hand, the kinetic dynamics considered here initially selects a different, compressible state.}.
In the incompressible fluid system, the radial acceleration generated by $(u^\eps\cdot\nabla)u^\eps$ is instantaneously compensated by the pressure gradient, so that the velocity remains divergence free. Our kinetic expansion shows instead that this macroscopic pressure balance is not established at first order in $t$ within the initial time layer, since the first correction is purely microscopic. The macroscopic kinetic response appears only at subsequent orders, through the transport of the stress generated by the microscopic dynamics. This mechanism produces a radial velocity of size $\mathcal{O}(t^2/\eps^5)$, as quantified in \eqref{eq:bound_u}, which may be interpreted as a macroscopic large centrifugal effect driving the dynamics radially outward. Its spatial variation yields the divergence lower bound $\mathcal{O}(t^2/\eps^6)$ in \eqref{eq:bound_div}, showing that the incompressibility constraint is violated inside the kinetic core layer. Through the local mass and energy balances, this divergence subsequently generates density and temperature fluctuations of size $\mathcal{O}(t^3/\eps^7)$, as quantified in \eqref{eq:bound_rho}.

In summary, the concentrated vortex develops a genuinely compressible kinetic initial layer before the usual incompressible viscous state has time to emerge. While the Navier--Stokes evolution immediately regularizes the vortex through diffusion and the pressure enforces incompressibility, the kinetic evolution first undergoes a remarkably different compressible dynamics on the initial time-layer $t\ll\eps^2$, before the collisional regime is expected to establish a genuinely incompressible viscous fluid state.

  \begin{remark}
    For our instability,  it is crucial to choose the regularization to be $\eps$-dependent. In fact, by regularizing uniformly in $\eps$ one enters in the regime studied in \cite{Kimpv}, where the authors perform a suitable hydrodynamic limit towards more general point vortex interactions, as in e.g. \cite{gallayARMA}. However, in \cite{Kimpv} the authors take as a kinetic initial data the evolution of the 2D Navier-Stokes after an $\mathcal{O}(1)$-time, meaning that the leading order heat-evolution has removed the singular behavior that we actually exploit in the initial time-layer. Given our short-time regime and the extremely localized instability, we believe it is feasible to extend our result to a collection of initial point-vortices as in \cite{Kimpv}. Moreover, we use the specific form of our initial data only to verify that we satisfy certain assumptions required to construct the approximate solution in Section \ref{sec:expansion}, but the arguments here easily generalize  to a class of initial data satisfying the desired hypotheses.
    \end{remark}
 \begin{remark}
        The other fundamental ingredients of our instability result are the fact that in the $\eps\to 0$ limit we target a $-1$-homogeneous initial data and that the leading order evolution of the Navier-Stokes equations is dictated by the heat flow. For small $-1$-homogeneous data, Brandolese \cite{brandolese2009} provided precise expansions valid as $t\to0$ aligning with this expectation, and similar estimates are expected for the large scale-invariant solutions constructed by Jia and \v{S}ver\'ak in \cite{JiaSverak} for the 3D Navier-Stokes equations. We comment more about this in Section \ref{sec:3D_perspectives}.
    \end{remark} 
  \begin{remark}
       The closing argument of the  instability result relies on a fixed-point method, and a minor adaptation of the argument would  allow for small, well-prepared perturbations of the initial data. 
    \end{remark} 
 \begin{remark}
       For the sake of simplicity we  have written the proof in the case of hard spheres, but it seems feasible to extend it to more general collision kernels. In fact, the case of hard potentials is technically more challenging compared to the soft ones in our construction of the approximate solution. This is because we treat perturbatively collisions and therefore for hard potentials one has to face  worst  velocity weight losses. 
    \end{remark} 
     \begin{remark}
       The same result would hold in a three-dimensional setting provided the~$x$ variable remains 2D, up to minor modifications in the proof. An axisymmetric setting in space would instead be physically more interesting, but technically more involved, so we leave this for future work.
    \end{remark} 
\subsection{Plan of the paper}
The coming section presents a naive expansion, which indicates the relevant regimes at play. Regularity issues prevent one from arguing directly using that expansion, and Section~\ref{sec:expansion} is devoted to the precise construction  of the approximate solution, by writing recursively an expansion up to an order~$M$ which will be chosen of size~$|\ln (\eps)|$. Assuming the expansion does provide an approximate solution, its form enables to derive all the instability lower bounds of Theorem~\ref{thm:result}. 
Section~\ref{eq:correction} is devoted to the proof that the   expansion does indeed provide an approximate solution to the rescaled Boltzmann equation: a large~$M$ is crucial here to close the nonlinear estimates. {In Section~\ref{sec:3D_perspectives} we comment on how to extend the kinetic instability framework to generic 2D and 3D situations.} Finally the appendix presents some classical results on the Boltzmann equation, adapted to the current functional framework.

\subsection{Notation convention}
We write~$C\gg1$ if~$C$ is larger than a large number, independent of~$\eps$. Similarly   we write~$A\lesssim B$ for $A \leq CB$ with~$C $ a universal constant.

\section{The naive asymptotic expansion}
\label{sec:naive}
In this section we present the informal arguments to construct the building blocks of our short-time instability.  
First of all, given the self-similar nature of the evolution in the Navier-Stokes equations in \eqref{def:uLO}, it seems natural to introduce a self-similar ansatz in the space-time variables for \eqref{eq:feps}, namely
\begin{align}
f^\eps(t,x,v)=\frac{1}{\sqrt{t}}\varphi^{\eps}\big(t,\frac{x}{\sqrt{t}},v\big)\,, \qquad \xi=\frac{x}{\sqrt{t}}\,\cdotp
\end{align} 
Then, a direct computation shows that $\varphi^\eps$ satisfies 
\begin{align} 
\label{eq:ssphieps}
t\de_t\varphi^\eps-\frac12(1+\xi\cdot\nabla_\xi)\varphi^\eps+\frac{\sqrt{t}}{\eps}\Big(v\cdot \nabla_\xi \varphi^\eps-\Gamma(\varphi^{\eps},\varphi^{\eps})\Big)=\frac{t}{\eps^2}L\varphi^\eps\,.
\end{align}
By the $-1$-homogeneity of $u_{\rm in}$, note that the initial datum for $\varphi^\eps$ is the same, that is
\begin{equation}
\label{def:phi0}
    \varphi^{\eps}|_{t=0}(\xi,v)=u_{\rm in}(\xi)\cdot v\sqrt{\mu }=\frac{1}{2\pi}\frac{\xi^\perp\cdot v}{|\xi|^2}\sqrt{\mu }\, .
\end{equation} 
Looking at \eqref{eq:ssphieps}, we readily notice three different regimes:
\begin{itemize}[itemsep=2pt]
    \item \textbf{Initial time-layer} $t\ll \eps^2$: within these time scales, the leading order operator is $v\cdot \nabla_\xi-\Gamma(\cdot,\cdot)$. Indeed, since our initial datum is singular at the origin, both the transport and the nonlinear collisions make the singularity worse at the origin, whereas $L$ (and $1+\xi\cdot\nabla_\xi$ as well) do not change said spatially singular behavior. Unless very special cancellations are at play, we can therefore expect that the operators with the $\sqrt{t}/\eps$ scaling are indeed leading the dynamics. For our regularized kinetic Lamb--Oseen vortex, we characterize precisely how the singularity at the origin evolves within this time-layer, exhibiting a completely different behavior as opposed to the expected incompressible fluid state.
    \item \textbf{Hydrodynamic regime} $t\gg \eps^2$: in this time-scale, assuming that the initial datum had enough time to smooth out, one should expect to enter a collision-dominated regime where $L$ dominates over transport and nonlinear collisions. Thus, here one should expect to run a standard perturbative approach (following for instance the Kato spectral approach in \cite{Gervais24}  based on the Ellis and Pinsky decomposition~\cite{Ellis-Pinsky}) to prove that the macroscopic variables associated with solutions to \eqref{eq:ssphieps} converge as $\eps \to 0$ to the Navier-Stokes-Fourier system \eqref{eq:NSF} in self-similar variables.
    \item \textbf{Transition time-layer} $t\approx \eps^2$: here is where the transition from the initial time-layer to the hydrodynamic regime happens. Understanding precisely how the dynamics realize this transition seems complicated because there are no clear scale separations to exploit. It would be interesting to address this point to fully characterize the evolution of the regularized kinetic Lamb--Oseen vortex, which could serve as the basis to understand how to rigorously perform the hydrodynamic limit for measure-valued initial data that are allowed in the 2D Navier-Stokes equations. 
\end{itemize}
\begin{remark}
    It is expected that one  can perform the hydrodynamic limit by decoupling the regularization parameter from $\eps$ (similar in spirit to \cite{Kimpv}), but this procedure only recovers the singular data \emph{after} the hydrodynamic limit has been performed, completely bypassing the initial time-layer which can hide some interesting physical phenomena related to different time-scales for the mesoscopic and macroscopic scales.
\end{remark}
In this paper, we focus exclusively on the initial time-layer $t \ll \eps^2$. While the self-similar equation \eqref{eq:ssphieps} is a nice tool to detect these three distinct regimes, performing the actual expansion in powers of $\sqrt{t}/\eps$ within self-similar variables is unnecessarily cumbersome. For our purposes, it is more direct to switch back to standard variables and perform a standard Taylor expansion in time directly on the Boltzmann equation \eqref{eq:feps}. 
To construct an informal asymptotic expansion, we assume that 
\begin{align}
\label{def:fheuristic}    f^{\eps}(t,x,v)\sim f_{\rm in, \rm LO}(x,v)+\sum_{n=1}^Mt^nf_{n}^\eps(x,v)\, ,
\end{align}
recalling that~$f_{\rm in, \rm LO}(x,v):= u_{\rm in}(x)\cdot v \sqrt{\mu}
$. Let us focus only on matching orders in $t$, in which case by plugging the ansatz above into \eqref{eq:feps} we find the following:
\begin{enumerate}[label=\roman*)]
    \item \textbf{Order $t^0$}: since we are starting with well-prepared data, there holds~$L(f_{\rm in, \rm LO})=0$. Thus at this order we must solve 
    \begin{align}
        f_1^\eps=\eps^{-1}\big(-v\cdot \nabla_x f_{\rm in, \rm LO}+\Gamma(f_{\rm in, \rm LO},f_{\rm in, \rm LO})\big)\, .
    \end{align}
    \item \textbf{Order $t^1$}: a direct computation shows that to match at order $t$, one finds that 
    \begin{align}
2f_2^\eps=\eps^{-2}L(f_1^\eps)+\eps^{-1}\big(-v\cdot \nabla_x f_{1}^\eps+\Gamma(f_{\rm in, \rm LO},f_{1}^\eps)+\Gamma(f_{1}^\eps,f_{\rm in, \rm LO})\big)\, .
    \end{align}
    \item \textbf{Order $t^{n-1}$}: generalizing this process to solve for $f_n^\eps$, one finds the recursive relation 
    \begin{align}
        n f_n^\eps = \eps^{-2}L(f_{n-1}^\eps) + \eps^{-1}\bigg(-v\cdot \nabla_x f_{n-1}^\eps + \sum_{k=0}^{n-1}\Gamma(f_k^\eps, f_{n-1-k}^\eps)\bigg)\,,
    \end{align}
    where we use the convention $f_0^\eps := f_{\rm in, LO}$.
\end{enumerate}
Now, given the singular nature of our initial data, we should not only separate orders in $t$ but we should also account for the singular behavior at the origin. For instance, 
\begin{align}
    \frac{1}{\eps |x|^2}\approx \frac{1}{\eps^2|x|} \quad \text{for } |x|\approx \eps.
\end{align}
To this end, let us only look at the expected behavior around the origin. For the initial data we have $|f_{\rm in, \rm LO}|\approx |x|^{-1}$. Then, since $|\nabla_xf_{\rm in, LO}|\approx |x|^{-2}$ and $|\Gamma(f_{\rm in, LO},f_{\rm in, LO})|\approx |x|^{-2}$ we expect that 
\begin{align}
    |f_{1}^\eps|\approx \eps^{-1}|x|^{-2}, \quad \Longrightarrow \quad |f_2^{\eps}|\approx \eps^{-3}|x|^{-2}+\eps^{-2}|x|^{-3}.
\end{align}
We then start recognizing a pattern. At each step we are effectively  multiplying the spatial singularity by an additional factor of $\eps^{-1} |x|^{-1}$. Evaluated near the vortex core where $|x|\approx \eps$, each subsequent order systematically loses an $\mathcal{O}(\eps^{-2})$ factor. Therefore, the behavior of the term $f_n^\eps$ is actually of order 
\begin{equation}
    |f_{n}^\eps|\approx \eps^{-2n-1}
\end{equation}
and 
\begin{align}
    |f^\eps|\approx \eps^{-1}\Big(1+\sum_{n=1}^M(t/\eps^2)^n \Big).
\end{align}
Hence, we see that in the initial time-layer $t\ll \eps^2$ we have a clear separation of orders for $|x|\approx \eps$, and we can hope to approximate the solution up to a given arbitrary order.  This asymptotic scaling at $|x|\approx \eps$ is in fact a direct consequence of the scale invariance of the initial data. Indeed, since $f_{\rm in, LO}(x,v)=\eps^{-1}f_{\rm in, LO}(\eps^{-1}x,v)$ we readily see that 
\begin{equation}
    f^{\eps}(t,x,v)=\eps^{-1}f^{1}\Big(\frac{t}{\eps^2},\frac{x}{\eps},v\Big)\sim f_{\rm in,LO}(x,v)+\frac{1}{\eps}\sum_{n=1}^M\Big(\frac{t}{\eps^2}\Big)^nf_{n}^1\Big(\frac{x}{\eps},v\Big),
\end{equation}
and solutions of our recurrence relation for $\eps=1$ are an $\mathcal{O}(1)$ at $|x|\approx \eps$.

Moreover, by a direct inspection of $f_1^\eps$, it is not hard to see that the first-order correction is indeed purely microscopic, since 
\begin{align}
    {\bf P}f_1^\eps=0\, .
\end{align}
This follows because our initial datum is incompressible and has zero density and temperature. Hence, the odd velocity parity perfectly cancels the macroscopic contributions from the transport term. Instead, for the nonlinear part, we simply exploit the standard mass, momentum, and energy conservation property ${\bf P}\Gamma(f,f)=0$. This  cancellation is the exact kinetic manifestation of the ``missing pressure'' discussed in the introduction.  This confirms that in the informal asymptotic expansion we have no macroscopic contributions at order $t$, thus indicating that we readily observe a difference with respect to the standard Lamb--Oseen vortex, whose first non-trivial order goes like
\begin{align}
    \nu t|\Delta_xu_{\rm in}|\approx  \nu t|x|^{-3} \approx \nu t\eps^{-3} \quad \text{for }|x|\approx \eps.
\end{align}
Because the first macroscopic kinetic correction occurs at order $t^2$ (scaling as $t^2|u[f^\eps_2]| \approx t^2\eps^{-5}$), for $|x|\approx \eps$ we get 
\begin{align}
  \big  |u[f^\eps]-u_{\rm LO}\big|\approx\big |\nu t \Delta_x u_{\rm in} - t^2 u[f^\eps_2]\big| \approx  t\eps^{-3} \quad \text{ since } t^2\eps^{-5} \ll  t\eps^{-3} \text{ when } t\ll \eps^2.
\end{align}
This means the expected viscous behavior from the Navier-Stokes equations is fundamentally wrong for the kinetic evolution at this time scale. 

To make this intuition rigorous, we must first regularize the initial data so as to have a clear separation of orders for small $|x|$. Then, we have to precisely quantify the size of the errors in a suitable functional space where the Boltzmann equation is well-posed, and in particular, we must carefully track the size of the error when we sum everything up. This is performed in the next section, but the whole construction is based on the basic intuition built from the simple informal calculations above.

\section{Expansion  emanating from the regularized kinetic Lamb--Oseen vortex}\label{sec:expansion}
Inspired by the structure of the approximate solution found in Section \ref{sec:naive}, we now proceed with our expansion for the regularized data. The idea is simply to smooth out each $|x|$ by replacing it with $|x|_\eps$ as defined in \eqref{def:xeps}. Recall also the cut-off defined in \eqref{def:chi}. Then, we introduce the macroscopic variables 
\begin{equation}
\label{def:rhoz}
    r(x):=\frac{x}{|x|_\eps} \in B_1(0)\, , \qquad z(x):=\eps^\gamma x \in \mathbb R^2 \, , 
\end{equation}
and we rewrite our initial regularized kinetic Lamb--Oseen vortex as 
\begin{align}
\label{def:rhofin}
f^\eps_{\rm in}(x,v)=\frac{1}{|x|_\eps}\Phi_0\big(r(x), v, z(x)\big), \quad \Phi_0(r,v,z):=\frac{1}{2\pi}(r^\perp \cdot v) \sqrt{\mu}(v) \chi(|z|)\, .
\end{align}
Note that $\Phi_0(r,v,z)$ is a smooth and bounded function in all variables, has Gaussian decay in $v$ and is supported in $| z |\le 2$. Thus, the singular behavior has been completely isolated within the prefactor~$|x|_\eps^{-1}$. Since $|x|_\eps \geq K\eps$, the leading order terms when measuring the $\mathcal{X}^{1,k}$-norm defined in~(\ref{eq:defX1k}) are indeed dictated by the power of the singularity at the origin. Furthermore, as $\eps \to 0$, the cut-off radius~$\eps^{-\gamma} \to \infty$, ensuring that we recover the Lamb--Oseen vortex in the unified $\eps\to 0$ limit.

Looking at the structure of the approximate solution found informally in Section \ref{sec:naive}, if we simply start the recurrence relation from \eqref{def:rhofin} we will carry over higher powers of $\eps^{-1}$ and $|x|_\eps^{-1}$ at each step. To rigorously construct the approximate solution, we must carefully track that singular behavior and estimate precisely the error at each approximation step. 
 In particular, in the initial time-layer $t\ll\eps^2$ we aim at rigorously establishing this approximation up to an arbitrary finite order $M$, and we need to track all the dependencies on $M$ and $\eps$ since later it will be necessary to take $M\approx |\ln (\eps)|$.

We start by introducing the technical framework needed to define precisely the recurrence relation defining our expansion. 

\subsection{Toolbox}
\label{sec:Toolbox}
To carefully control the velocity moments generated by the collision and transport operators, we allow for quantitative losses in the Gaussian decay (analogous to the shrinking of the radius of analyticity in Cauchy-Kovalevskaya type theorems\footnote{this is also used for instance in the study of the BBGKY hierarchy for the Boltzmann equation -- see~\cite{Lanford,GSRT,Fougeres} }). Let $M \ge 1$ be a fixed integer representing the maximum order of our expansion. We define 
\begin{align}
\label{def:losing_weights}
    \beta_n:=\frac14 - \frac{n}{32M}\, \virgp \qquad \mu_{n}(v) := e^{-\beta_n |v|^2/4}\,, \qquad N_M:=2M+4\,.
\end{align}
We then introduce the following.
\begin{definition}\label{def:Peps}
Let $n \in \{0, \dots, N_M-2\}$. We say that a function $g=g(x,v)$ belongs to the class $\mathcal{P}_\eps(n)$ if it can be written as 
\begin{equation}\label{eq:Peps_form}
    g(x,v) = \frac{1}{\eps^n |x|_\eps^{n+1}} \sum_{j=0}^{n} \Big(\frac{|x|_\eps}{\eps}\Big)^j\Phi_{n,j}\big(r(x), v, z(x)\big) \, ,
\end{equation}
where $r(x),z(x)$ are defined in \eqref{def:rhoz}, and each \emph{profile} $\Phi_{n,j}(r,v,z)$ is smooth in $(r, z) \in B_1(0) \times \R^2$, compactly supported in $|z| \le 2$ and satisfies the following: there exists a constant $\mathfrak{g}_n>0$ such that
\begin{equation}\label{eq:profile_norm}
    \sum_{j=0}^n \sum_{|\alpha| + |\beta| \le N_M - n} \frac{\eps^{\gamma |\alpha|}}{\alpha! \beta!} \sup_{r \in B_1(0), \, z \in B_2(0)}  \big| \nabla_z^\alpha\nabla_r^\beta  \Phi_{n,j}(r,v,z) \big| \leq \mathfrak{g}_n \mu_{n}(v) \, ,
\end{equation}
where $\alpha=(\alpha_1,\alpha_2),\, \beta=(\beta_1,\beta_2)$ are multi-indices. We refer to $\mathfrak{g}_n$ as the \emph{profile bound} of $g$.
\end{definition}
Note that we require the definition for~$n$ ranging up to $2M+2$. This is because the nonlinear residual contains a derivative and products of two terms of order at most~$M$, meaning that  at least $2M+1$ is required to define properly the residual. Then, with $N_M=2M+4$ we are left with at least two spatial derivatives under control in \eqref{eq:profile_norm} when $n=N_M-2$, which is useful to exploit standard Sobolev embeddings when necessary.

Moreover, by \eqref{eq:profile_norm}, the profiles trivially exhibit a vanishing tail in velocity, namely for any $k \ge 0$,
\begin{equation}\label{eq:vanishing_tail_def}
    \lim_{R \to \infty} \sup_{|v| \ge R} \langle v\rangle^k \big\| \Phi_{n,j}(\cdot,v,\cdot) \big\|_{W^{1,\infty}_{r,z}} = 0 \, .
\end{equation}
This ensures that functions in $\mathcal{P}_\eps(n)$ satisfy a necessary condition to belong to the $\mathcal{X}^{1,k}$ spaces (using also the compact support assumption in the $z$-variable which deals with the~$H^1_x$ part of the norm; the~$W_x$ part is dealt with as in the computations done below).

\begin{remark}
\label{def:profilef0}
As an example, we readily observe that $f_{\rm in}^\eps$ defined in \eqref{def:rhofin} belongs to $\mathcal{P}_\eps(0)$. Indeed by our choice of $\beta_0 = 1/4$, we have $\mu_{0}(v) = e^{-|v|^2/16}$. The initial profile is exactly  \begin{equation}
    \Phi_0(r,v,z) = \frac{1}{(2\pi )^\frac32} (r^\perp \cdot v) e^{-|v|^2/4} \chi(|z|)  =\frac{1}{(2\pi )^\frac32} (r^\perp \cdot v) e^{-3|v|^2/16} \chi(|z|) \mu_{0}(v) \, .
\end{equation}
To evaluate its profile bound, we must control the $\eps$-weighted $r,z$ derivatives up to order $2M+4$. First, note that $\displaystyle \sup_{v}|v|e^{-3|v|^2/16}
=\sqrt{8/(3e)}
$, and therefore we can uniformly bound the factor in $v$. Then, since~$\Phi_0$ is linear in $r$, any $r$-derivatives of order at least two vanish identically. Hence, we can explicitly define
\begin{equation}\label{eq:boundf0}
    \mathfrak{f}_0 := \frac{1}{(2\pi)^\frac32 } \sqrt{\frac{8}{3e}} \sum_{|\alpha| + |\beta| \le N_M} \frac{\eps^{\gamma|\alpha|}}{\alpha!\beta!} \big\| \nabla_z^\alpha \chi(|z|) \big\|_{L^\infty} \big\| \nabla_r^\beta r^\perp \big\|_{L^\infty(B_1(0))} \, .
\end{equation}
Because the sum over $\beta$ only contains terms for $|\beta| \le 1$, it produces a constant factor. We can then fix the cut-off $\chi$ as a bump function that belongs to a suitable Gevrey class so that $\|\partial_z^\alpha\chi\|_{L^\infty}\leq C_\chi^{|\alpha|} (|\alpha|!)^\sigma$ for some $\sigma>1$. The terms in the sum over $\alpha$ are therefore bounded by $(C_\chi\eps^\gamma )^{|\alpha|} (|\alpha|!)^{\sigma-1}$. Since we will take at most $M=\mathcal{O}(|\ln\eps|)$, the growth of the Gevrey factorial is easily absorbed by $\eps^\gamma$ for $\eps$ sufficiently small, meaning the finite sum defining $\mathfrak{f}_0$ is uniformly bounded by an $\mathcal{O}(1)$ universal constant provided that $M\leq C_0|\ln (\eps)|.$
\end{remark}

\begin{remark}
\label{rem:Wienernorm}
    Note that from the definition above we immediately deduce that the $L^\infty_x$ norm of a term in the class $\mathcal{P}_\eps(n)$ is of order $\eps^{-(2n+1)}$, which is what we expect from the naive expansion. However, to compute the $W_x$ norm it is more convenient to first set
    \begin{equation}
        y_\eps(x) := \frac{x}{K\eps}\, \virgp \quad |x|_\eps = K\eps \langle y_\eps(x)\rangle, \quad r(x) = \frac{y_\eps(x)}{\langle y_\eps(x)\rangle}\, \virgp\quad z(x) = K\eps^{\gamma+1}y_\eps(x) \, .
    \end{equation}
    Then consider a typical term in $\mathcal{P}_\eps(n)$ given by $\displaystyle g_{n,j} := \frac{1}{\eps^{n+j} |x|_\eps^{n+1-j}} \Phi_{n,j}$. We can rewrite it as 
    \begin{align}
        g_{n,j}(x,v) = \frac{1}{\eps^{2n+1}}\frac{1}{K^{n+1-j}} \Psi_{n,j}(y_\eps(x),v) \, ,
    \end{align}
    where 
    \begin{align}
    \label{def:Psinj}
        \Psi_{n,j}(y,v) = \frac{1}{\langle y \rangle^{n+1-j}}\Phi_{n,j}\Big(\frac{y}{\langle y \rangle}, v, K\eps^{\gamma+1}y\Big) \, .
    \end{align}
    Using the scaling property $\mathcal{F}_x[g(\cdot/\alpha)](\xi) = \alpha^2 \mathcal{F}_y[g](\alpha\xi)$ of the Fourier transform in 2D and the change of variables $\eta=\alpha\xi$ we readily get
    \begin{align}
        \|g_{n,j}(\cdot,v)\|_{W_x} = \frac{1}{\eps^{2n+1}K^{n+1-j}}\|\Psi_{n,j}(\cdot, v)\|_{W_y} \, .
    \end{align}
    One is then left with the control of the $L^1$ norm of the Fourier transform of a typical term as in \eqref{def:Psinj}, which we can bound uniformly thanks to the spatial decay of order at least $\langle y\rangle^{-1}$ and the required regularity of~$\Phi_{n,j}$. This will be shown precisely in Lemma \ref{lem:analytic_bounds}.
\end{remark}
To quantify precisely the dependence on $n$ and $\eps$ of the $\mathcal{X}^{1,k}$-norm, we have the following.
\begin{lemma}\label{lem:analytic_bounds}
For any $g \in \mathcal{P}_\eps(n)$ with $0 \le n \le N_M-2$ and associated profile bound $\mathfrak{g}_n$, then~$g  $ belongs to~$ \mathcal{X}^{1,k}$ for all~$k \geq 0$. Moreover, fix $K\geq1 $, $ 0<\gamma< 1$ and $k\ge 0$. Then there exists  $\eps_0(K,\gamma)=\eps_0$ and $C_k>0$, with $C_k$ independent of
$\eps,K,n,M$, such that for all $0<\eps<\eps_0$ there holds
\begin{equation}
    \| g \|_{\mathcal{X}^{1,k}} \leq C_k (n+1)^3 \mathfrak{g}_n \frac{1}{K \eps^{2n+1}} \, \cdotp
\end{equation}
\end{lemma}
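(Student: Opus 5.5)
We reduce to a single term $g_{n,j}:=\eps^{-(n+j)}|x|_\eps^{-(n+1-j)}\Phi_{n,j}(r(x),v,z(x))$. By the triangle inequality this costs a factor $n+1$ after summing over $j$. For each $j$ we then bound separately the $H^1_x$ and the $W_x$ parts of $\|g_{n,j}(\cdot,v)\|$, uniformly in $v$ once multiplied by $\langle v\rangle^k$. Following Remark~\ref{rem:Wienernorm}, we change variables to $y=y_\eps(x)=x/(K\eps)$. This gives
\[
g_{n,j}=\eps^{-(2n+1)}K^{-(n+1-j)}\Psi_{n,j}(y,v),
\]
with $\Psi_{n,j}$ as in \eqref{def:Psinj}. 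Since $n+1-j\ge1$ and $K\ge1$, the factor $K^{-(n+1-j)}\le K^{-1}$, which produces the $1/K$ in the statement. The velocity weight is harmless: \eqref{eq:profile_norm} gives pointwise bounds of the form $\mathfrak g_n\mu_n(v)$ on $\Phi_{n,j}$ and its derivatives. Since $\beta_n\ge 1/4-1/16>0$, we have $\langle v\rangle^k\mu_n(v)\le C_k$ with $C_k$ independent of $n,M$. The same estimate also gives the vanishing tail condition in \eqref{eq:defX1k}, hence $g\in\mathcal X^{1,k}$.

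\textbf{$H^1_x$ part.} In $x$ variables, $\|g_{n,j}\|_{L^2_x}\le \eps^{-(n+j)}\sup|\Phi_{n,j}|\,\||x|_\eps^{-(n+1-j)}\|_{L^2(|x|\le 2\eps^{-\gamma})}$. When $n+1-j\ge2$ the integral is dominated by the core $|x|\lesssim K\eps$, and we get $\lesssim \eps^{-(n+j)}(K\eps)^{-(n-j)}$. When $n+1-j=1$ we only get a logarithm $|\ln\eps|^{1/2}$ from the compact support in $z$. In both cases the result is at most $\eps^{-(2n+1)}K^{-1}$, with room to spare since $\eps|\ln\eps|^{1/2}\ll1$.

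For the gradient, the chain rule gives $\nabla r=O(|x|_\eps^{-1})$, $\nabla z=\eps^\gamma$ and $\nabla|x|_\eps^{-m}=O(m|x|_\eps^{-m-1})$. Hence $|\nabla g_{n,j}|\lesssim (n+2)\eps^{-(n+j)}|x|_\eps^{-(n+2-j)}\sup(|\Phi|+|\nabla_r\Phi|+|\nabla_z\Phi|)$, and the $L^2$ norm is $\lesssim (n+2)\mathfrak g_n\mu_n\,\eps^{-(n+j)}(K\eps)^{-(n+1-j)}\le (n+1)\mathfrak g_n\mu_n\eps^{-(2n+1)}K^{-1}$. The $\alpha!\beta!$ normalization in \eqref{eq:profile_norm} only helps for first derivatives.

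\textbf{$W_x$ part (main obstacle).} We need $\|\widehat{\Psi_{n,j}}(\cdot,v)\|_{L^1_\eta}\lesssim (n+1)^2\mathfrak g_n\mu_n(v)$ uniformly in $K$, and for $\eps<\eps_0(K,\gamma)$. The plan is to use
\[
\|\widehat\Psi\|_{L^1}\lesssim \|\Psi\|_{L^2}^{1/2}\|\Delta\Psi\|_{L^2}^{1/2}\quad\text{or}\quad \|\widehat\Psi\|_{L^1}\lesssim\|\Psi\|_{H^{1+\eta}}.
\]
The first inequality follows by splitting $|\eta|\lessgtr R$ and applying Cauchy--Schwarz, with a $\log$-free bound obtained from $\|\widehat\Psi\|_{L^1(|\eta|<R)}\le \sqrt{\pi}R\|\Psi\|_{L^2}$ and $\|\widehat\Psi\|_{L^1(|\eta|>R)}\lesssim R^{-1}\|\Delta\Psi\|_{L^2}$. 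Here two derivatives are available since $N_M-n\ge2$.

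The difficulty is that $\Psi_{n,j}$ decays only like $\langle y\rangle^{-1}$ when $j=n$, so it is not in $L^2_y$ uniformly: its support extends to $|y|\lesssim \eps^{-1-\gamma}/K$, and the $L^2$ norm grows like $|\ln\eps|^{1/2}$. To avoid this we split $\Psi=\Psi\mathbf 1_{\text{core}}+\Psi\mathbf 1_{\text{far}}$ with a smooth dyadic partition of unity in $|y|$.

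On each dyadic annulus $|y|\sim 2^\ell$, rescaling to unit size shows that the Fourier $L^1$ norm of the piece is bounded by $2^{-\ell(n+1-j)}2^{2\ell}\cdot 2^{-2\ell}$. This uses the scaling $\mathcal F[g(\cdot/\alpha)]$, under which $L^1$ Fourier norms are scale invariant. The piece has amplitude $2^{-\ell(n+1-j)}$, and derivatives of the rescaled piece stay bounded because $\partial_y$ of $y/\langle y\rangle$ and of $\langle y\rangle^{-m}$ gain $2^{-\ell}$. The $z$-derivatives gain $K\eps^{\gamma+1}2^\ell\le 2\eps^\gamma$ on the support. So each piece costs $\lesssim (n+1)^2\mathfrak g_n\mu_n 2^{-\ell(n+1-j)}$, where $(n+1)^2$ comes from two derivatives falling on $\langle y\rangle^{-(n+1-j)}$.

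This is summable in $\ell$ when $n+1-j\ge1$, which always holds, giving a constant factor. Collecting the factors $(n+1)$ from the sum over $j$ and $(n+1)^2$ from the derivatives, together with $K^{-1}$ and $\eps^{-(2n+1)}$, yields the claim. The condition $\eps<\eps_0(K,\gamma)$ is used only to ensure $K\eps^{\gamma+1}\le\eps^\gamma\le1$, so that $z$-derivatives are harmless.
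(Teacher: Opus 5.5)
The gap is in how you handle $z$-derivatives, and it shows up mainly in the per-annulus Wiener estimate. A symptom is that your argument never uses the hypothesis $\gamma<1$.

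You treat $\nabla_z\Phi_{n,j}$ as $O(\mathfrak g_n\mu_n)$. But \eqref{eq:profile_norm} carries the weight $\eps^{\gamma|\alpha|}$, so it only gives $|\nabla_z^\alpha\Phi_{n,j}|\le \alpha!\,\eps^{-\gamma|\alpha|}\mathfrak g_n\mu_n(v)$. Also, on the support $|z|=K\eps^{1+\gamma}|y|\le 2$ you only have $K\eps^{1+\gamma}2^\ell\lesssim 1$, not $\le 2\eps^\gamma$.

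After rescaling $w=y/2^\ell$, each derivative landing in the $z$-slot therefore costs $2^\ell K\eps^{1+\gamma}\cdot\eps^{-\gamma}=K\eps\,2^\ell$. This is the paper's $K\eps$ term in $|\nabla_y\Phi_{n,j}(\tilde r,v,\tilde z)|$, multiplied by $2^\ell$. It reaches size $\eps^{-\gamma}$ on the outermost annuli $2^\ell\sim K^{-1}\eps^{-1-\gamma}$. So the rescaled pieces do not have uniformly bounded derivatives. Your claim that each piece costs $(n+1)^2\mathfrak g_n\mu_n 2^{-\ell p}$, with $p=n+1-j$, fails for large $\ell$. A $W^{2,\infty}$ bound on the rescaled piece only gives
$(n+1)^2\mathfrak g_n\mu_n 2^{-\ell p}(1+K\eps 2^\ell)^2$.

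The same slip appears in your $H^1$ gradient bound. There you fold the $z$-term into $|x|_\eps^{-(n+2-j)}\sup|\nabla_z\Phi|$ and then treat $\sup|\nabla_z\Phi|$ as $O(\mathfrak g_n\mu_n)$, which loses a factor $\eps^{-\gamma}$.

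Both places are repairable.

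\textbf{Wiener part.} With the corrected per-piece bound and $p=1$, the extra contribution is
$\sum_{2^\ell\lesssim K^{-1}\eps^{-1-\gamma}}2^{-\ell}(K\eps 2^\ell)^2\lesssim K\eps^{1-\gamma}$.
This is $\le 1$ exactly because $\gamma<1$ and $\eps<\eps_0(K,\gamma)$. For $p\ge 2$ the extra sum is at most $(K\eps)^2|\ln\eps|$. This is the same place where the paper uses $\gamma<1$: in the $L^{4/3}_y$ bound on the $K\eps\langle y\rangle^{-p}$ part of $\nabla_y\Psi_{n,j}$. Alternatively, apply your interpolation inequality $\|\widehat\phi\|_{L^1}\lesssim\|\phi\|_{L^2}^{1/2}\|\Delta\phi\|_{L^2}^{1/2}$ to each rescaled piece. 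That gives $(n+1)\mathfrak g_n\mu_n 2^{-\ell p}(1+K\eps 2^\ell)$ per piece and a total $\lesssim 1+K\eps|\ln\eps|$.

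\textbf{$H^1$ part.} Do not trade $\eps^\gamma|x|_\eps$ for a constant. Use $\eps^\gamma|\nabla_z\Phi_{n,j}|\le\mathfrak g_n\mu_n$ directly, so the $z$-term is $\eps^{-(n+j)}|x|_\eps^{-(n+1-j)}\mathfrak g_n\mu_n$. That term is already controlled by your $L^2$ step.

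With these corrections your route is a valid alternative to the paper's. The paper splits $|\eta|\lessgtr 1$, using Cauchy--Schwarz with $\|\Delta_y\Psi_{n,j}\|_{L^2}$ at high frequency, and H\"older plus Hausdorff--Young with $\|\nabla_y\Psi_{n,j}\|_{L^{4/3}}$ at low frequency. Your dyadic decomposition, combined with scale invariance of $\|\widehat{\cdot}\|_{L^1}$ in 2D, handles the slow $\langle y\rangle^{-1}$ decay more transparently. The price is that the $z$-cost must be tracked annulus by annulus.
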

\begin{proof}
Let $g \in \mathcal{P}_\eps(n)$. By definition, it is a finite sum of terms of the form 
$$g_{n,j}(x,v) :=  \frac{1}{\eps^{n+j} |x|_\eps^{n+1-j}} \Phi_{n,j}\big(r(x),v,z(x)\big) \, ,$$
where~$j$ runs from~$0$ to~$n$. 

For the $H^1_x$ norm, we first bound the $L^2_x$ norm. By the properties of the profile $\Phi_{n,j}$,  the pointwise bounds are controlled by  $\mathfrak{g}_n\mu_n(v)$. It is then enough to study what happens when we integrate negative powers of $|x|_\eps$, and we consider two cases:
\begin{itemize}
    \item[$\diamond$] If $j<n$ then
    \begin{align}
       \|g_{n,j}(\cdot, v)\|_{L^2_x}\leq \frac{\mathfrak{g}_n \mu_{n}(v)}{\eps^{n+j}} \big\||x|_\eps^{-(n+1-j)}\big\|_{L^2_x}= \sqrt{\frac{\pi}{n-j}} \frac{\mathfrak{g}_n \mu_{n}(v)}{K^{n-j}\eps^{2n}} \, \cdotp
    \end{align}
    This is better than the stated bound since $\eps^{-2n} \ll \eps^{-2n-1}$ and~$K>1$.
    \item[$\diamond$] If $j=n$, we exploit the compact support on the $z$ variable to get that
    \begin{align}
      \|g_{n,n}(\cdot, v)\|_{L^2_x} \leq \frac{\mathfrak{g}_n \mu_{n}(v)}{\eps^{2n}}\Big(\int_{|x| \le 2\eps^{-\gamma}} \frac{1}{|x|_\eps^{2}} \, dx \Big)^\frac12\leq C |\ln \eps|^\frac12 \frac{\mathfrak{g}_n \mu_{n}(v)}{\eps^{2n}} \, \cdotp
    \end{align}
    This is again better  since $|\ln \eps|^\frac12 \eps^{-2n} \ll \eps^{-2n-1}$ for small $\eps$.
\end{itemize}
As expected, the leading order scaling therefore arises from the $L^2_x$ norm of $\nabla_x g_{n,j}$. Since
\begin{align}
\label{eq:trivial x rho R}
    \nabla_x |x|_\eps = r \,  \, , \qquad \nabla_x r_i = \frac{1}{|x|_\eps} \big(e_i - r_i r\big) \, , \qquad \nabla_x z = \eps^\gamma \mathbb{I} \, ,
\end{align}
where $\mathbb{I}$ is the $2\times 2$ identity matrix, applying the chain rule we see that
\begin{align}
    \nabla_x g_{n,j} = \frac{1}{\eps^{n+j} |x|_\eps^{n+2-j}} \bigg[ -(n+1-j)r \Phi_{n,j} + (\mathbb{I} - r\otimes r)\nabla_r \Phi_{n,j} + \eps^\gamma |x|_\eps \nabla_z \Phi_{n,j} \bigg] \, .
\end{align}
For the first two terms, since $n+2-j\geq 2$, they are square-integrable even without exploiting the cut-off.  Thus
\begin{align}
    \frac{1}{\eps^{n+j}}\Big\||\cdot|_\eps^{-(n+2- j)}r \Phi_{n,j}(\cdot, v)\Big\|_{L^2_x}\leq C\frac{\mathfrak{g}_n \mu_{n}(v)}{K\eps^{2n+1}} \, ,
\end{align}
where $C>0$ is a universal constant, and an analogous bound holds for the term involving $\nabla_r\Phi_{n,j}$. For the term involving $\eps^\gamma\nabla_z\Phi_{n,j}$, observe that its size is determined by the $L^2$-norm of $\eps^{-(n+j)}|x|_\eps^{-(n+1-j)}$, which is controlled exactly as above. Summing over the $n+1$ terms, the $H^1_x$ bound scales as 
\begin{align}
\label{bd:H1x}
    \|g\|_{H^1_x}\lesssim (n+1) \frac{\mathfrak{g}_n \mu_{n}(v)}{K\eps^{2n+1}}\, \cdotp
\end{align}

Finally, we turn our attention to controlling the $W_x$-norm. As observed in Remark \ref{rem:Wienernorm}, we have that 
\begin{align}
    \eps^{2n+1}K^{n+1-j}\|g_{n,j}(\cdot, v)\|_{W_x} 
    = \int_{\mathbb{R}^2}|\widehat{\Psi}_{n,j}(\eta,v)| d\eta \, .
\end{align}
We then split into low frequencies $|\eta|\leq 1$ and high frequencies $|\eta|>1$. For the latter, by applying the Cauchy-Schwarz inequality we get  
\begin{align}
    \int_{|\eta|>1}|\widehat{\Psi}_{n,j}(\eta,v)| d\eta \lesssim \Big\||\cdot|^2\widehat{\Psi}_{n,j}(\cdot,v)\Big\|_{L^2_\eta} \lesssim \|\Delta_y\Psi_{n,j}(\cdot,v)\|_{L^2_y} \, .
\end{align}
Let $p := n+1-j \ge 1$. From \eqref{def:Psinj}, when evaluating 
$$\Delta_y \Psi_{n,j} (y,v)= \Delta_y \Big(\langle y \rangle^{-p} \Phi_{n,j}\big (\frac y{\langle y \rangle} , v, K\eps^{\gamma+1}y\big)\Big)$$ we get derivatives of the weight $\langle y \rangle^{-p}$, the $\tilde{r}(y) := y/\langle y \rangle$,  the scaled variable $\tilde{z}(y) := K\eps^{1+\gamma}y$,  and~$\Phi_{n,j}$. In particular, we have  
\begin{align}
    &|\nabla_y (\Phi_{n,j}(\tilde{r}(y),v,\tilde{z}(y)))| \lesssim |\nabla_r \Phi_{n,j}||\nabla_y \tilde r| + \eps^{\gamma}|\nabla_z \Phi_{n,j}| \eps^{-\gamma}|\nabla_y \tilde z|  \lesssim \mathfrak{g}_n\mu_n(v) \big( \langle y \rangle^{-1} + K\eps \big) \,, \\
    &|\Delta_y (\Phi_{n,j}(\tilde{r}(y),v,\tilde{z}(y)))|\lesssim \mathfrak{g}_n\mu_n(v) \big( \langle y \rangle^{-2} + (K\eps)^2 \big)\, .
\end{align}
Therefore, since $|\partial_{y_j}(\langle y\rangle^{-p})|\lesssim p\langle y\rangle^{-(p+1)}$ and $p\leq n+1$, we infer 
\begin{equation}
    |\Delta_y \Psi_{n,j}(y,v)| \lesssim (n+1)^2 \mathfrak{g}_n \mu_n(v) \Big( \frac{1}{\langle y \rangle^{p+2}} + \frac{K^2\eps^2}{\langle y \rangle^{p}} \Big) \, .
\end{equation}
Taking the $L^2_y$ norm we find:
\begin{align}
    \|\Delta_y\Psi_{n,j}(\cdot,v)\|_{L^2_y} &\lesssim (n+1)^2\mathfrak{g}_n\mu_n(v)\bigg(\int_{|y|\leq 2K^{-1}\eps^{-(1+\gamma)}} \Big(\frac{1}{\langle y\rangle^{2(p+2)}} + \frac{K^4\eps^4}{\langle y\rangle^{2p}}\Big) \, dy\bigg)^\frac12 \, .
\end{align}
Since $p \ge 1$, the first integral is uniformly bounded while for the second term we can absorb the logarithmic loss from the integral by taking $\eps$ sufficiently small. Thus, 
\begin{align}
\label{bd:DeltaPsinj}
    \|\Delta_y\Psi_{n,j}(\cdot,v)\|_{L^2_y} \lesssim (n+1)^2\mathfrak{g}_n\mu_n(v) \, .
\end{align}
To handle the low-frequency terms, by H\"older's inequality we get
\begin{align}
    \int_{|\eta|\leq1}|\widehat{\Psi}_{n,j}(\eta,v)| d\eta \lesssim \Big(\int_{|\eta|\leq 1}|\eta|^{-\frac43}d\eta\Big)^\frac34 \Big\||\cdot|\widehat{\Psi}_{n,j}(\cdot,v)\Big\|_{L^4_\eta} \lesssim \|\nabla_y\Psi_{n,j}(\cdot,v)\|_{L^\frac43_y} \, ,
\end{align}
where the last bound follows by the Hausdorff-Young inequality. Arguing as done to prove \eqref{bd:DeltaPsinj}, since~$p\geq 1$ (and $y\in \mathbb{R}^2$) we have 
\begin{align}
    \|\nabla_y\Psi_{n,j}(\cdot,v)\|_{L^\frac43_y} &\lesssim (n+1)\mathfrak{g}_n\mu_n(v)\Big(\int_{|y|\leq 2 K^{-1}\eps^{-(1+\gamma)}}\Big(\frac{1}{\langle y\rangle^{\frac43(p+1)}}+\frac{(K\eps)^\frac43}{\langle y\rangle^{\frac43 p}}\Big)dy\Big)^\frac34\\
    &\lesssim (n+1)\mathfrak{g}_n\mu_n(v)\big(1+\eps^{\frac43} {\eps^{-\frac23(1+\gamma)}} \big)^\frac34.
\end{align} 
Since $0<\gamma< 1$, the last term in the parenthesis is controlled  uniformly in $\eps$. Hence, combining the low and high frequencies bounds and summing over $j$ we get
\begin{align}
    \label{bd:Wienerpf}
    \|g(\cdot,v)\|_{W_x}\lesssim \frac{(n+1)^3}{\eps^{2n+1}K}\mathfrak{g}_n\mu_n(v)\, .
\end{align}

To control the $\mathcal{X}^{1,k}$ norm, we need to sum up \eqref{bd:H1x} and \eqref{bd:Wienerpf} and take the velocity supremum against the weight $\langle v \rangle^k$. Thus, we also have to bound
\begin{equation}
    \sup_{v \in \RR^2} \Big( \langle v \rangle^k \mu_{n}(v) \Big) =  \sup_{v \in \RR^2} \Big( \langle v \rangle^k e^{-\beta_n |v|^2/4} \Big) \, .
\end{equation}
Since we restrict our expansion to $n \le 2M+1$, we know that $$\beta_n = \frac{1}{4} - \frac{n}{32M} \ge  \frac{1}{4} - \frac{1}{8} =\frac{1}{8}\, \cdotp$$ Therefore, the exponential decay is uniformly bounded below by $e^{-|v|^2/32}$, meaning
\begin{equation}
    \sup_{v \in \RR^2} \Big( \langle v \rangle^k e^{-\beta_n |v|^2/4} \Big) \leq \sup_{v \in \RR^2} \Big( \langle v \rangle^k e^{-|v|^2/32} \Big) \leq C_k \, ,
\end{equation}
where $C_k > 0$ is a constant depending only on $k$. Combining this with the spatial sum, we get
\begin{equation}
    \|g\|_{\mathcal{X}^{1,k}} \leq C_k (n+1)^3 \mathfrak{g}_n \frac{1}{K\eps^{2n+1}} \, \virgp
\end{equation}
with an updated constant $C_k>0$.
\end{proof}

Now we study how the operators involved in the Boltzmann equation change the elements in the class in Definition \ref{def:Peps}.
\begin{lemma}\label{lem:algebra}
Let~$n,m\geq 0$  with $n\leq N_M-3$ and $n+m+1\leq N_M-2$. Let $g \in \mathcal{P}_\eps(n)$ with profile bound~$\mathfrak{g}_n$, and $h \in \mathcal{P}_\eps(m)$ with profile bound $\mathfrak{h}_m$. Then there exists a universal constant $C> 0$ (independent of $\eps$, $n$, $m$, and $M$) such that
\begin{enumerate}[label=\roman*)]
    \item \label{prop:transport} $\eps^{-1} v \cdot \nabla_x g \in \mathcal{P}_\eps(n+1)$ with profile bound $\leq C(n+1) M^{3/2} \mathfrak{g}_n, $
    \item \label{prop:linear} $\eps^{-2} L(g) \in \mathcal{P}_\eps(n+1)$ with profile bound $\leq C\sqrt{M}\mathfrak{g}_n,$
    \item \label{prop:nonlinear} $\eps^{-1} \Gamma(g, h) \in \mathcal{P}_\eps(n+m+1)$ with profile bound $\leq C\sqrt{M} \mathfrak{g}_n \mathfrak{h}_m.$
\end{enumerate}
\end{lemma}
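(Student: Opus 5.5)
The plan is to work term by term on the representation \eqref{eq:Peps_form}. For each of the three operators, I compute explicitly how it acts on a single piece $\eps^{-n-j}|x|_\eps^{-(n+1-j)}\Phi_{n,j}(r,v,z)$. Then I regroup the result into the form required by Definition~\ref{def:Peps} at the new level. Finally I estimate the new profiles in the weighted norm \eqref{eq:profile_norm}, with the Gaussian weight degraded from $\mu_n$ to $\mu_{n+1}$ (resp.\ $\mu_{n+m+1}$). The only source of $M$-dependence in the velocity variable is the elementary bound
\begin{equation}
\langle v\rangle\,\mu_n(v)=\langle v\rangle e^{-|v|^2/(128M)}\,\mu_{n+1}(v)\le C\sqrt{M}\,\mu_{n+1}(v),
\end{equation}
which uses $\beta_n-\beta_{n+1}=1/(32M)$ together with $\mu_{n'}\le\mu_{n''}$ for $n'\le n''$.

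\textbf{Transport.} For \ref{prop:transport} I reuse the chain-rule identity for $\nabla_x g_{n,j}$ from the proof of Lemma~\ref{lem:analytic_bounds}, which gives
\begin{equation}
\eps^{-1}v\cdot\nabla_x g_{n,j}=\frac{1}{\eps^{n+1}|x|_\eps^{n+2}}\Big(\frac{|x|_\eps}{\eps}\Big)^{j}\Big[-(n+1-j)(r\cdot v)\Phi_{n,j}+v\cdot(\mathbb I-r\otimes r)\nabla_r\Phi_{n,j}\Big]+\frac{1}{\eps^{n+1}|x|_\eps^{n+2}}\Big(\frac{|x|_\eps}{\eps}\Big)^{j+1}\eps^\gamma v\cdot\nabla_z\Phi_{n,j}.
\end{equation}
Since $j+1\le n+1$, this lies in $\mathcal P_\eps(n+1)$. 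The new profiles keep the compact support in $z$, and they only need $N_M-n-1$ derivatives, which is exactly one less than what is available. For the profile bound:
\begin{itemize}
\item the factor $v$ costs $\sqrt M$ by the display above;
\item the prefactor $(n+1-j)$ costs $n+1$;
\item the extra derivative costs a factor $\lesssim N_M\lesssim M$. Indeed, $\frac{1}{\beta!}\nabla_r^\beta\partial_{r_i}\Phi=(\beta_i+1)\frac{1}{(\beta+e_i)!}\nabla_r^{\beta+e_i}\Phi$ with $\beta_i+1\le N_M$, and similarly $\frac{\eps^{\gamma|\alpha|}}{\alpha!}\nabla_z^\alpha(\eps^\gamma\partial_{z_i}\Phi)$ is $(\alpha_i+1)$ times a term of the original sum.
\end{itemize}
The smooth multipliers $r\cdot v$ and $\mathbb I-r\otimes r$ are polynomials of degree $\le2$ in $r$. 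Hence Leibniz's rule with the $1/\beta!$ weights (binomial coefficients cancel against factorials) only produces a universal constant. Altogether this gives $C(n+1)M^{3/2}\mathfrak g_n$.

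\textbf{Linear term.} For \ref{prop:linear}, $L$ acts only in $v$, so it commutes with $\nabla_r,\nabla_z$. We also have $\eps^{-2}\eps^{-n-j}|x|_\eps^{-(n+1-j)}=\eps^{-(n+1)}|x|_\eps^{-(n+2)}(|x|_\eps/\eps)^{j+1}$, so the new profiles are $L\Phi_{n,j}$ at index $j+1\le n+1$. It then suffices to show $|Lf|\le C\sqrt M\mu_{n+1}$ whenever $|f|\le\mu_n$, uniformly in $n$. I write $L=-\nu(v)+K$ with $\nu(v)\sim\langle v\rangle$ for hard spheres; the multiplicative part gives the $\sqrt M$. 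For $K$ I use the classical Grad kernel bounds: $K$ maps $e^{-a|v|^2}$ into $C e^{-a|v|^2}$ uniformly for $a$ in a compact subset of $[0,1/4)$. Here $a=\beta_n/4\in[1/32,1/16]$, so this holds with a uniform constant, and then $\mu_n\le\mu_{n+1}$.

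\textbf{Nonlinear term.} For \ref{prop:nonlinear}, multiplying the two expansions gives
\begin{equation}
\eps^{-1}\Gamma(g,h)=\frac{1}{\eps^{n+m+1}|x|_\eps^{n+m+2}}\sum_{j\le n,\,l\le m}\Big(\frac{|x|_\eps}{\eps}\Big)^{j+l}\Gamma(\Phi_{n,j},\Psi_{m,l}),
\end{equation}
with $j+l\le n+m+1$. The profiles are obtained by grouping according to $j+l$. Derivatives in $(r,z)$ distribute by Leibniz over the bilinear form. With the $1/(\alpha!\beta!)$ weights the resulting sums are bounded by the product of the two profile norms, because the derivative budget $N_M-n-m-1$ is smaller than both $N_M-n$ and $N_M-m$. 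The velocity estimate is again the crux. The loss term is bounded by $\langle v\rangle\mu_n(v)\int|v-v_*|\mu_m(v_*)\,dv_*\lesssim\sqrt M\mu_{n+m+1}(v)$. For the gain term, energy conservation $|v'|^2+|v'_*|^2=|v|^2+|v_*|^2$ gives $\mu_n(v')\mu_m(v'_*)\le\mu_{n+m+1}(v)\mu_{n+m+1}(v_*)$, since both $\beta_n,\beta_m\ge\beta_{n+m+1}$. Integrating $|v-v_*|\mu_{n+m+1}(v_*)$ against $\sqrt{\mu}(v_*)$ (coming from the definition of $\Gamma$) then gives $\langle v\rangle\mu_{n+m+1}(v)$, and one uses the $\sqrt M$ trick once more. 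Since the weight exponent ranges over a compact set, the constants are uniform.

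I expect the main obstacle to be the combinatorial bookkeeping in \eqref{eq:profile_norm}: checking that Leibniz and chain-rule expansions under the factorial weights lose at most one factor $M$ per derivative, and never a factor growing like $N_M!$. For the nonlinear term this is the submultiplicativity of the weighted norm, which I would prove first as a separate algebra estimate.
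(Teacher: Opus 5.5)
Your proposal is correct and matches the paper's proof closely. Both compute each operator on a single piece $\eps^{-n-j}|x|_\eps^{-(n+1-j)}\Phi_{n,j}$ and regroup the result at the new level. Both use the factorial bookkeeping $1/\alpha!=(\alpha_i+1)/(\alpha+e_i)!$, costing $\lesssim M$ per extra derivative, and the same weight sliding $\langle v\rangle\mu_n\le C\sqrt M\mu_{n+1}$.

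The one real difference is in \ref{prop:linear}. You split $L=-\nu+K$ and invoke Grad's Gaussian-weighted kernel estimates for $K$. The paper instead writes $L=\Gamma(\sqrt\mu,\cdot)+\Gamma(\cdot,\sqrt\mu)$, notes $\sqrt\mu\lesssim\mu_n$, and reuses the energy-conservation bound of Lemma~\ref{lem:Gamma_exponential_bound}. The paper's route is more elementary and needs no 2D kernel estimates. Yours gives slightly sharper information: $K$ loses no velocity weight, so the whole $\langle v\rangle$ loss comes from $\nu(v)$. Your explicit Leibniz argument for the multipliers $r\cdot v$ and $\mathbb I-r\otimes r$ under the $1/\beta!$ weights fills in a step the paper passes over silently.

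There are three bookkeeping slips to fix:
\begin{itemize}
\item The $z$-derivative term is $\eps^{1+\gamma}(|x|_\eps/\eps)^{j+1}\,v\cdot\nabla_z\Phi_{n,j}$, not $\eps^{\gamma}(|x|_\eps/\eps)^{j+1}\,v\cdot\nabla_z\Phi_{n,j}$. This is harmless, since the extra $\eps$ only helps.
\item In the gain term, bound $\mu_n(v_*')\mu_m(v')$ by $\mu_{n+m}(v)\mu_{n+m}(v_*)$, using $\beta_{n+m}\le\min(\beta_n,\beta_m)$, and only then slide once. As you wrote it, you first pass to $\mu_{n+m+1}$ and then slide again, which lands in $\mu_{n+m+2}$. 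That is not the weight required for $\mathcal P_\eps(n+m+1)$.
\item Your loss-term display has a stray factor $\langle v\rangle$. Taken literally it gives $\langle v\rangle^2\mu_{n+m}$, which costs $M$ rather than $\sqrt M$. The correct bound is $\mu_m(v)\int|v-v_*|\sqrt{\mu(v_*)}\,\mu_n(v_*)\,dv_*\lesssim\langle v\rangle\mu_{n+m}(v)$.
\end{itemize}
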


\begin{proof}
Recall the identities in \eqref{eq:trivial x rho R} and denote in this proof \begin{equation}
    \label{def:Reps}
    {\sf R}_\eps(x):=\frac{|x|_\eps}{\eps}\, \cdotp
\end{equation}
To prove \ref{prop:transport}, we again consider a typical $(n,j)$-term for an element in the class $\mathcal{P}_\eps(n)$ and we compute that
\begin{align}
    &\eps^{-1} v \cdot \nabla_x \left( \frac{{\sf R}_\eps^j}{\eps^n |x|_\eps^{n+1}} \Phi_{n,j}(r,v,z) \right) \nonumber \\
    &= \frac{1}{\eps^{n+1}|x|_\eps^{n+2}} \bigg[ -(n+1-j)(v\cdot r){\sf R}_\eps^j \Phi_{n,j}   \\
    &\qquad \qquad \qquad + {\sf R}_\eps^j \sum_{i,k} v_i(\delta_{ik} - r_ir_k)\partial_{r_k}\Phi_{n,j} + \eps^{1+\gamma} {\sf R}_\eps^{j+1} (v \cdot \nabla_z \Phi_{n,j}) \bigg] \, .
\end{align}
Since the original profile $\Phi_{n,j}$ possesses bounded derivatives up to order $2M+4-n$, the application of a single spatial derivative leaves the resulting terms with bounded derivatives up to order~$2M+3-n = 2M+4-(n+1)$, which precisely matches the regularity requirement for $\mathcal{P}_\eps(n+1)$ (using also that~$j \leq n$). Each term in the brackets multiplies the profile at most by $\langle v \rangle$. To absorb this linear velocity growth, we slide the weight to the next index $\beta_{n+1} = \beta_n - \frac{1}{32M}$, satisfying
\begin{align}\label{eq:sliding_weight_bound}
    \langle v \rangle \mu_{n}(v)  = \langle v \rangle e^{-\frac{1}{128M} |v|^2} e^{-\beta_{n+1} |v|^2/4} \le \Big( \sup_{v \in \RR^2} \langle v \rangle e^{-\frac{1}{128M} |v|^2} \Big) \mu_{n+1}(v) \, \leq C \sqrt{M}\mu_{n+1}(v).
\end{align}
Furthermore, when evaluating the new profile bound via the factorial-weighted sum \eqref{eq:profile_norm}, applying $\nabla_z$ or $\nabla_r$ increases the derivative multi-index by $1$. To match the new terms to the factorial weight of $\Phi_{n,j}$, we can rewrite the coefficients using $$\frac{1}{\alpha!} = \frac{\alpha_i+1}{(\alpha+e_i)!} \le \frac{2M+4}{(\alpha+e_i)!}\, \cdotp$$ 
Multiplying this by the $\sqrt{M}$ cost from the velocity weight, the new profile is in $\mathcal{P}_\eps(n+1)$ with the profile bound $C(n+1)M^{3/2}\mathfrak{g}_n$.

To prove \ref{prop:linear}, we observe that 
$$\eps^{-2} L \left( \frac{{\sf R}_\eps^j}{\eps^n |x|_\eps^{n+1}} \Phi_{n,j} \right) = \frac{{\sf R}_\eps^{j+1}}{\eps^{n+1} |x|_\eps^{n+2}} L(\Phi_{n,j})\, .$$
Since the spatial derivatives $\nabla_z, \nabla_r$ commute with the linear operator $L$,  it is enough to control the velocity weights. Since $\beta_n\geq 1/8$, by Lemma \ref{lem:Gamma_exponential_bound} we have that
\begin{equation}
| \Phi_{n,j}|\leq \mathfrak{g}_n\mu_n(v)\quad \Longrightarrow\quad |L( \Phi_{n,j})| \leq C \langle v \rangle \mathfrak{g}_n\mu_n(v)\, .
\end{equation}
Applying \eqref{eq:sliding_weight_bound}, we obtain the profile bound $C\sqrt{M} \mathfrak{g}_n$.

Finally, to prove \ref{prop:nonlinear}, we get
\begin{align}
    \eps^{-1} \Gamma \left( \frac{{\sf R}_\eps^{j_1}}{\eps^n |x|_\eps^{n+1}} \Phi_1 \, , \, \frac{{\sf R}_\eps^{j_2}}{\eps^m |x|_\eps^{m+1}} \Phi_2 \right) = \frac{{\sf R}_\eps^{j_1+j_2}}{\eps^{n+m+1} |x|_\eps^{n+m+2}} \Gamma(\Phi_1, \Phi_2) \, .
\end{align}
This produces the correct spatial prefactor for $\mathcal{P}_\eps(n+m+1)$. Since $\Gamma$ acts solely on the velocity variable, applying spatial derivatives to the collision operator requires the Leibniz rule. For any multi-indices $\alpha, \beta$, we have
\begin{equation}
    \nabla_z^\alpha \nabla_r^\beta \Gamma(\Phi_1, \Phi_2) = \sum_{\alpha' \le \alpha} \sum_{\beta' \le \beta} \binom{\alpha}{\alpha'} \binom{\beta}{\beta'} \Gamma\big(\nabla_z^{\alpha'} \nabla_r^{\beta'} \Phi_1, \nabla_z^{\alpha-\alpha'} \nabla_r^{\beta-\beta'} \Phi_2\big) \, . 
\end{equation}
Thus 
\begin{equation}
    \frac{\eps^{\gamma|\alpha|}}{\alpha! \beta!} \big|\nabla_z^\alpha \nabla_r^\beta \Gamma(\Phi_1, \Phi_2)\big| \le \sum_{\substack{\alpha' \le \alpha \\ \beta' \le \beta}} \Big| \Gamma \Big( \frac{\eps^{\gamma|\alpha'|}}{\alpha'! \beta'!} \nabla_z^{\alpha'} \nabla_r^{\beta'} \Phi_1 \, , \, \frac{\eps^{\gamma|\alpha-\alpha'|}}{(\alpha-\alpha')! (\beta-\beta')!} \nabla_z^{\alpha-\alpha'} \nabla_r^{\beta-\beta'} \Phi_2 \Big) \Big| \, .
\end{equation} 
Applying the hard-sphere collision estimates from Lemma \ref{lem:Gamma_exponential_bound}, and summing over all $|\alpha| + |\beta| \le 2M+4-(n+m+1)$, this discrete convolution is  bounded by 
\begin{equation}
    \sum_{\alpha, \beta} \frac{\eps^{\gamma|\alpha|}}{\alpha! \beta!} \big|\nabla_z^\alpha \nabla_r^\beta \Gamma(\Phi_1, \Phi_2)\big| \leq C \langle v \rangle \bigg( \sum_{\alpha', \beta'} \frac{\eps^{\gamma|\alpha'|}}{\alpha'! \beta'!} \big|\nabla_z^{\alpha'} \nabla_r^{\beta'} \Phi_1\big| \bigg) \bigg( \sum_{\alpha'', \beta''} \frac{\eps^{\gamma|\alpha''|}}{\alpha''! \beta''!} \big|\nabla_z^{\alpha''} \nabla_r^{\beta''} \Phi_2\big| \bigg) \, .\end{equation} 
Evaluating the sums, the individual profiles are bounded by $\mathfrak{g}_n\mu_{n}(v)$ and $\mathfrak{h}_m\mu_{m}(v)$. Since $\beta_{n+m} \le \min(\beta_n, \beta_m)$, both Gaussian factors are trivially bounded by $\mu_{n+m}(v)$. Therefore, the nonlinear operator yields a pointwise decay of $C \mathfrak{g}_n \mathfrak{h}_m \langle v \rangle \mu_{n+m}(v)$. Sliding the weight down by one step to $\beta_{n+m+1}$ and invoking \eqref{eq:sliding_weight_bound},  yields the profile bound $C\sqrt{M} \mathfrak{g}_n \mathfrak{h}_m$.\end{proof}

\subsection{Iterative construction of the approximate solution}

We are now ready to  construct the approximate solution up to an arbitrary order $M \ge 1$. We define the $M$-th order approximate solution as
\begin{equation}
\label{def:fappM}
    f_{\rm app}^{\eps(M)}(t,x,v) := \sum_{n=0}^M t^n f_n^\eps(x,v) \, ,
\end{equation}
where $f_0^\eps(x,v) := f_{\rm in}^\eps(x,v) \in \mathcal{P}_\eps(0)$ (see Remark~\ref{def:profilef0})  is defined in~(\ref{def:rhofin}) and we seek to define iteratively each $f_n^\eps(x,v)$  in~$ \mathcal{P}_\eps(n)$.
To measure the accuracy of this approximation, we define the error at order $M$ as the remainder obtained by plugging $f_{\rm app}^{\eps(M)}$ directly into the  Boltzmann equation \eqref{eq:feps}
\begin{equation}
\label{def:ErrorM}
    \cE^{\eps(M)}(t) := \de_t f_{\rm app}^{\eps(M)} + \eps^{-1} v \cdot \nabla_x f_{\rm app}^{\eps(M)} - \eps^{-2} L(f_{\rm app}^{\eps(M)}) - \eps^{-1} \Gamma(f_{\rm app}^{\eps(M)}, f_{\rm app}^{\eps(M)}) \, .
\end{equation}
By expanding \eqref{def:ErrorM} and grouping the terms by powers of $t$, we enforce that all terms up to order $t^{M-1}$ vanish. This yields a direct recursive definition for the functions $f_n^\eps$, mimicking the informal expansion of Section \ref{sec:naive}, but now grounded in our  functional framework. The main goal is to prove the following.
\begin{lemma}\label{lem:recurrence_bounds}
For any $1 \le n \le M$, let the terms $f_n^\eps$ be defined recursively by~$f_0^\eps = f^\eps_{\rm in}$ as in~\eqref{def:rhofin}
and
\begin{equation}\label{eq:fn_recurrence}
 \forall n \geq 1 \, , \quad    f_n^\eps := \frac{1}{n} \bigg[ \eps^{-2}L(f_{n-1}^\eps) - \eps^{-1}v\cdot \nabla_x f_{n-1}^\eps + \eps^{-1}\sum_{k=0}^{n-1}\Gamma(f_k^\eps, f_{n-1-k}^\eps) \bigg] \, .
\end{equation}
Then $f_n^\eps \in \mathcal{P}_\eps(n)$. Furthermore, if $M\leq C_0|\ln(\eps)|$, there exists $\eps_0(C_0)>0$ and~$C_* > 0$ (independent of $\eps_0$, $K$, $n$, and $M$) such that for all $0<\eps<\eps_0$ the profile bound $\mathfrak{f}_n$ associated with $f_n^\eps$ satisfies:
\begin{equation}\label{eq:profile_geom_bound}
 \forall\, 
 0\leq n \leq M \, , \quad    \mathfrak{f}_n \leq  (C_* M^{3/2})^n \,\mathfrak{f}_0\, .
\end{equation}
Consequently, for any fixed $k \ge 0$, there exists a constant $C_k > 0$ such that 
\begin{equation}\label{eq:fn_X_bound}
    \| f_n^\eps \|_{\mathcal{X}^{1,k}} \leq C_k (n+1)^3 (C_* M^{3/2})^n \frac{1}{K \eps^{2n+1}} \, \cdotp
\end{equation}
Finally, the error term defined in~\eqref{def:ErrorM}
evaluated at time $t$ satisfies the bound
\begin{equation}\label{eq:error_bound}
    \| \cE^{\eps(M)}(t) \|_{\mathcal{X}^{1,k}} \leq \frac{C_k}{K \eps^3} \sum_{j=M}^{2M} (j+2)^4 (C_* M^{3/2})^{j+1} \left(\frac{t}{\eps^2}\right)^j \, .
\end{equation}
\end{lemma}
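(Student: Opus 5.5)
The plan is an induction on $n$ based on Lemma~\ref{lem:algebra}, followed by Lemma~\ref{lem:analytic_bounds} for the norms and a direct bookkeeping of the uncancelled powers of $t$ in the error.

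\textbf{Membership and profile bounds.} Assume $f_k^\eps\in\mathcal P_\eps(k)$ with $\mathfrak f_k\le (C_*M^{3/2})^k\mathfrak f_0$ for all $k\le n-1$. By Lemma~\ref{lem:algebra}\ref{prop:transport}--\ref{prop:linear}, the terms $\eps^{-1}v\cdot\nabla_x f_{n-1}^\eps$ and $\eps^{-2}L f_{n-1}^\eps$ lie in $\mathcal P_\eps(n)$, with profile bounds at most $C n M^{3/2}\mathfrak f_{n-1}$ and $C\sqrt M\,\mathfrak f_{n-1}$. The hypotheses $n-1\le N_M-3$ and $n\le N_M-2$ hold because $n\le M$. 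The class $\mathcal P_\eps(n)$ is closed under sums, and profile bounds add. By Lemma~\ref{lem:algebra}\ref{prop:nonlinear}, each $\eps^{-1}\Gamma(f_k^\eps,f_{n-1-k}^\eps)$ lies in $\mathcal P_\eps(n)$ with bound $C\sqrt M\,\mathfrak f_k\mathfrak f_{n-1-k}\le C\sqrt M\,\mathfrak f_0^2(C_*M^{3/2})^{n-1}$. Summing over the $n$ values of $k$ and dividing by $n$ (the prefactor $1/n$ in \eqref{eq:fn_recurrence}), we obtain
\begin{equation}
\mathfrak f_n\le \big(CM^{3/2}+C\sqrt M(1+\mathfrak f_0)\big)(C_*M^{3/2})^{n-1}\mathfrak f_0 .
\end{equation}
The factor $1/n$ is what cancels both the $n$ from the transport term and the $n$ terms of the Cauchy sum. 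Since $\mathfrak f_0=\mathcal O(1)$ uniformly when $M\le C_0|\ln\eps|$ (Remark~\ref{def:profilef0}), choosing $C_*\ge C(2+\mathfrak f_0)$ closes the induction. Estimate \eqref{eq:fn_X_bound} then follows immediately from Lemma~\ref{lem:analytic_bounds}.

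\textbf{Error.} I will expand \eqref{def:ErrorM}. By construction, the coefficients of $t^{j}$ for $j\le M-1$ vanish, since this is exactly the recurrence. What remains is:
\begin{itemize}
\item the linear part at order $t^M$, namely $t^M\big(\eps^{-1}v\cdot\nabla_x f_M^\eps-\eps^{-2}Lf_M^\eps\big)$;
\item the nonlinear terms $-t^{j}\eps^{-1}\sum_{k+l=j,\;k,l\le M}\Gamma(f_k^\eps,f_l^\eps)$ for $M\le j\le 2M$.
\end{itemize}
Each coefficient of $t^j$ is an element of $\mathcal P_\eps(j+1)$ by Lemma~\ref{lem:algebra}. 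The constraint $j+1\le 2M+1\le N_M-2$ is exactly why $N_M=2M+4$ was chosen.

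The profile bounds of these coefficients are as follows. The linear pieces are at most $C(M+1)M^{3/2}\mathfrak f_M\lesssim (M+1)(C_*M^{3/2})^{M+1}$. The nonlinear pieces are at most $C\sqrt M\,(j+1)\,\mathfrak f_0^2(C_*M^{3/2})^{j}$, because there are at most $j+1$ pairs $(k,l)$ with $k+l=j$; this is $\lesssim (j+1)(C_*M^{3/2})^{j+1}$ after enlarging $C_*$. Lemma~\ref{lem:analytic_bounds} converts a profile bound at level $j+1$ into an $\mathcal X^{1,k}$ bound with an extra factor $(j+2)^3/(K\eps^{2j+3})$. Writing $t^j\eps^{-2j-3}=\eps^{-3}(t/\eps^2)^j$, we get $\frac{C_k}{K\eps^3}(j+2)^4(C_*M^{3/2})^{j+1}(t/\eps^2)^j$, and summing over $j\in[M,2M]$ yields \eqref{eq:error_bound}.

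\textbf{Expected obstacle.} The delicate step is keeping the profile-bound growth geometric, $(C_*M^{3/2})^n$, with no factorial in $n$. This hinges on two things. First, the $1/n$ must absorb both the $n$-dependence in Lemma~\ref{lem:algebra}\ref{prop:transport} and the $n$-term Cauchy sum. Second, the quadratic terms must not compound, which is guaranteed because $\mathfrak f_0$ is bounded and the induction uses the product form $\mathfrak f_k\mathfrak f_{n-1-k}$ with $k+(n-1-k)=n-1$. The remaining check is that the weight index used stays admissible, i.e.\ at most $2M+1$, so that $\beta_n\ge 1/8$ in Lemmas~\ref{lem:analytic_bounds} and~\ref{lem:algebra}.
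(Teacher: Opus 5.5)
Your proof is correct. Its architecture matches the paper's: induction through Lemma~\ref{lem:algebra}, conversion to $\mathcal X^{1,k}$ norms via Lemma~\ref{lem:analytic_bounds}, and the same list of surviving terms in $\cE^{\eps(M)}$. Those terms are the linear part at order $t^M$ and the quadratic terms for $M\le j\le 2M$, each lying in $\mathcal P_\eps(j+1)$. The one genuine difference is how you close the quadratic recursion for the profile bounds.

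\textbf{The paper's route.} The paper bounds the prefactor $1/n$ in front of the convolution sum by $1$. This gives $\mathfrak f_n\le CM^{3/2}\big(\mathfrak f_{n-1}+\sum_{k}\mathfrak f_k\mathfrak f_{n-1-k}\big)$. It then dominates $\mathfrak f_n$ by a rescaled Catalan sequence, $\mathfrak f_n\le \mathfrak f_0A^n\mathfrak C_n\le \mathfrak f_0(4A)^n$, with $A=CM^{3/2}(1+\mathfrak f_0)$.

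\textbf{Your route.} You keep the $1/n$ and let it average the $n$ products $\mathfrak f_k\mathfrak f_{n-1-k}$. Under the geometric inductive hypothesis, each product is at most $\mathfrak f_0^2(C_*M^{3/2})^{n-1}$. The induction therefore closes directly once $C_*\ge C(2+\mathfrak f_0)$, with the $1/n$ absorbing both the factor $n$ from the transport bound and the length of the Cauchy sum.

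\textbf{Comparison.} Your argument is shorter and needs no combinatorial majorant. The paper's argument does not rely on the averaging factor $1/n$ (which comes from $\partial_t t^n$) to tame the convolution. It would therefore survive a recursion without that factor, at the cost of an extra numerical factor $4$ in $C_*$. Either way one obtains $C_*$ depending only on universal constants and on the uniform bound for $\mathfrak f_0$ valid when $M\le C_0|\ln\eps|$. Your final enlargement of $C_*$, absorbing $C\sqrt M\,\mathfrak f_0^2$ into $C_*M^{3/2}$ in the error coefficients, is legitimate because all earlier bounds remain valid for larger $C_*$.
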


\begin{proof}
We proceed by induction on $n$. For the base case $n=0$, the initial data $f_0^\eps \in \mathcal{P}_\eps(0)$ possesses a smooth, bounded profile $\Phi_0$ as given in \eqref{def:rhofin}. Its profile bound $\mathfrak{f}_0 > 0$ is a universal constant determined in \eqref{eq:boundf0}.

Assume that for all $m \le n-1$, $f_m^\eps$ belongs to $\mathcal{P}_\eps(m)$ with profile bounds $\mathfrak{f}_m$ satisfying \eqref{eq:profile_geom_bound}. By Lemma \ref{lem:algebra}, the operators acting on these terms yield functions belonging to $\mathcal{P}_\eps(n)$. Specifically, the terms $\eps^{-2}L(f_{n-1}^\eps)$ and $\eps^{-1}v\cdot \nabla_x f_{n-1}^\eps$ produce profiles bounded by $C n M^{3/2} \mathfrak{f}_{n-1}$. Similarly, each nonlinear interaction $\eps^{-1}\Gamma(f_k^\eps, f_{n-1-k}^\eps)$ produces a profile 
in~$\mathcal{P}_\eps(n)$
bounded by $C \sqrt{M} \mathfrak{f}_k \mathfrak{f}_{n-1-k}$. 
Using definition \eqref{eq:fn_recurrence}, we get the profile bound $\mathfrak{f}_n$
\begin{equation}
    \mathfrak{f}_n \leq \frac{1}{n} \bigg[ C n M^{3/2} \mathfrak{f}_{n-1} + C \sqrt{M} \sum_{k=0}^{n-1} \mathfrak{f}_k \mathfrak{f}_{n-1-k} \bigg] \leq C M^{3/2}\Big( \mathfrak{f}_{n-1} +  \sum_{k=0}^{n-1} \mathfrak{f}_k \mathfrak{f}_{n-1-k}\Big) \, .
\end{equation}
To control this recursive inequality, we bound $\mathfrak{f}_n$ using a scaled sequence of Catalan numbers $\mathfrak{C}_n$, which are defined as $\mathfrak{C}_n \displaystyle = \sum_{k=0}^{n-1} \mathfrak{C}_k \mathfrak{C}_{n-1-k}$ with $\mathfrak{C}_0 = 1$. We claim that
\begin{equation}\label{eq:claimfn} \forall n \geq 0 \, , \quad \mathfrak{f}_n \leq \mathfrak{f}_0 A^n \mathfrak{C}_n
\end{equation}
 for a suitable constant $A > 0$. The base case $n=0$ holds trivially. For the inductive step, substituting the ansatz yields
\begin{equation}
    \mathfrak{f}_n \leq C M^{3/2} \bigg( \mathfrak{f}_0 A^{n-1} \mathfrak{C}_{n-1} + \sum_{k=0}^{n-1} (\mathfrak{f}_0A^k \mathfrak{C}_k)(\mathfrak{f}_0 A^{n-1-k} \mathfrak{C}_{n-1-k}) \bigg) \, .
\end{equation}
Using the Catalan identity, the sum evaluates to $\mathfrak{f}_0^2 A^{n-1} \mathfrak{C}_n$. Thus to prove~\eqref{eq:claimfn}, we require
\begin{equation}
    C M^{3/2} \big( \mathfrak{f}_0 A^{n-1} \mathfrak{C}_{n-1} + \mathfrak{f}_0^2 A^{n-1} \mathfrak{C}_n \big) \leq \mathfrak{f}_0 A^n \mathfrak{C}_n \, .
\end{equation}
Dividing by $\mathfrak{f}_0A^{n-1} \mathfrak{C}_n$ and using the fact that $\mathfrak{C}_{n-1} / \mathfrak{C}_n \le 1$ for all $n \ge 1$, the condition reduces to $C M^{3/2} (1 + \mathfrak{f}_0) \leq A$. Therefore, setting $A = C M^{3/2} (1 + \mathfrak{f}_0)$, the induction is closed and the claim~(\ref{eq:claimfn}) is proved. 

Using the standard upper bound $\mathfrak{C}_n \le 4^n$, we obtain $\mathfrak{f}_n \le \mathfrak{f}_0 (4A)^n$. By defining the constant $C_* = 4C(1+\mathfrak{f}_0)\max\{1, \mathfrak{f}_0\}$, we arrive at the simplified bound \eqref{eq:profile_geom_bound}.

The $\mathcal{X}^{1,k}$ bound \eqref{eq:fn_X_bound} follows by applying Lemma \ref{lem:analytic_bounds} to $f_n^\eps \in \mathcal{P}_\eps(n)$ with the   profile bound \eqref{eq:profile_geom_bound}.

To bound the error $\cE^{\eps(M)}(t)$, we note that all terms of order $t^j$ for $j \le M-1$ cancel by the definition of $f_n^\eps$. The remaining terms are:
\begin{equation}
    \cE^{\eps(M)}(t) = t^M \big( \eps^{-1} v \cdot \nabla_x f_M^\eps - \eps^{-2} L(f_M^\eps) \big) - \eps^{-1} \sum_{j=M}^{2M} t^j \sum_{\substack{p+q=j \\ 0 \le p,q \le M}} \Gamma(f_p^\eps, f_q^\eps) \, .
\end{equation}
By Lemma~\ref{lem:algebra},  linear operators on $f_M^\eps$ generate a profile in~$\mathcal{P}_\eps(M+1)$ with bound $$C(M+1)M^{3/2}(C_*M^{3/2})^M\, .$$ For any $M \le j \le 2M$, the convolution sum has at most $j+1$ terms, each yielding a profile in  the set~$\mathcal{P}_\eps(j+1)$ bounded by $C\sqrt{M}(C_*M^{3/2})^j$. 
Summing these, the total profile bound at order $t^j$ is bounded by~$ (j+1)(C_*M^{3/2})^{j+1}$. 
Applying Lemma \ref{lem:analytic_bounds}, the $\mathcal{X}^{1,k}$ norm of the $t^j$ contribution incurs an additional factor $(j+2)^3 (K\eps^{2j+3})^{-1}$. Thus, the norm of the order $t^j$ term is bounded by
\begin{equation}
  (j+2)^4 \frac{1}{K\eps^3} \left(\frac{t}{\eps^2}\right)^j (C_*M^{3/2})^{j+1}\, .
\end{equation}
Summing over $j \in \{M, \dots, 2M\}$ yields the final pointwise error bound \eqref{eq:error_bound}.
\end{proof}

The bound \eqref{eq:fn_X_bound} provides the following estimate on $f_{\rm app}^{\eps(M)}$ defined in \eqref{def:fappM} and $\mathcal{E}^{\eps(M)}$ in \eqref{def:ErrorM} for the initial time-layer under consideration.

\begin{corollary}\label{cor:bound on the approximation}
With the notation of Lemma \ref{lem:recurrence_bounds}, if $\delta$ is small enough then for all~$t \leq \delta \displaystyle \frac{\eps^2}{C_* M^{3/2}}$ and all~$k \geq 0$ there holds 
\begin{align}
\|  f_{\rm app}^{\eps(M)}(t)\|_{\mathcal{X}^{1,k}} &\lesssim  \frac1{K \eps}\quad \text{if } M \geq 1 \, , 
 \\
    \| \cE^{\eps(M)}(t) \|_{\mathcal{X}^{1,k}} &\lesssim \eps^{N} \quad \text{if } M=(N+4)\left\lceil\frac{|\ln(\eps)|}{|\ln(\delta)|}\right\rceil \text{ and } N\geq 1\, .
\end{align}
\end{corollary}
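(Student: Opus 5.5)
Both bounds follow by summing geometric series built from the estimates of Lemma~\ref{lem:recurrence_bounds}. Throughout, set $q:=C_*M^{3/2}\,t/\eps^2$. The hypothesis $t\le \delta\,\eps^2/(C_*M^{3/2})$ is exactly $q\le\delta$.

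\emph{The approximate solution.} I apply the triangle inequality to \eqref{def:fappM} and use \eqref{eq:fn_X_bound}:
\begin{equation}
\|f_{\rm app}^{\eps(M)}(t)\|_{\mathcal{X}^{1,k}}\le \sum_{n=0}^M t^n\|f_n^\eps\|_{\mathcal{X}^{1,k}}\le \frac{C_k}{K\eps}\sum_{n=0}^{M}(n+1)^3 q^n\le \frac{C_k}{K\eps}\sum_{n\ge0}(n+1)^3\delta^n .
\end{equation}
For $\delta\le 1/2$ the last series converges to a universal constant. This gives the first bound, and it holds for every $M\ge1$.

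\emph{The error.} I start from \eqref{eq:error_bound} and write $(C_*M^{3/2})^{j+1}(t/\eps^2)^j=C_*M^{3/2}\,q^j$. This gives
\begin{equation}
\|\cE^{\eps(M)}(t)\|_{\mathcal{X}^{1,k}}\le \frac{C_k\,C_*M^{3/2}}{K\eps^3}\sum_{j=M}^{2M}(j+2)^4\delta^j .
\end{equation}
Assuming $\delta\le 1/4$, I absorb the polynomial factor using $(j+2)^4\delta^{j/2}\lesssim 1$ uniformly in $j$. The sum is then bounded by
\begin{equation}
\sum_{j\ge M}\delta^{j/2}\lesssim \delta^{M/2},
\end{equation}
so that
\begin{equation}
\|\cE^{\eps(M)}(t)\|_{\mathcal{X}^{1,k}}\lesssim \frac{M^{3/2}}{K\eps^3}\,\delta^{M/2}.
\end{equation}

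\emph{Choice of $M$.} The statement fixes $M=(N+4)\lceil|\ln\eps|/|\ln\delta|\rceil$. Then $M|\ln\delta|\ge (N+4)|\ln\eps|$, hence $\delta^{M/2}\le \eps^{(N+4)/2}$. This only recovers half the exponent. To get the full exponent, I would not use the crude step $(j+2)^4\delta^{j/2}\lesssim1$. Instead I bound the polynomial against a weaker loss: $(j+2)^4\delta^{j}\le C_\kappa\,\delta^{(1-\kappa)j}$ for any small $\kappa>0$. This gives
\begin{equation}
\delta^{(1-\kappa)M}\le \eps^{(1-\kappa)(N+4)}.
\end{equation}
Alternatively, one can shrink $\delta$ further relative to the $\delta$ appearing in the definition of $M$. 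With $M\approx (N+4)|\ln\eps|/|\ln\delta|$, the remaining prefactor $M^{3/2}/(K\eps^3)$ is at most $\eps^{-3}|\ln\eps|^{3/2}$. The available gain is $\eps^{(1-\kappa)(N+4)}$, which leaves $\eps^{N+1-\kappa(N+4)}|\ln\eps|^{3/2}$. Since $N\ge1$ and $\kappa$ is small, this is at most $\eps^N$, as claimed.

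\emph{Consistency with Lemma~\ref{lem:recurrence_bounds}.} This choice of $M$ satisfies $M\le C_0|\ln\eps|$ with $C_0\approx(N+5)/|\ln\delta|$. So Lemma~\ref{lem:recurrence_bounds} applies once $\eps<\eps_0(C_0)$.

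\emph{Main obstacle.} The delicate point is the bookkeeping of constants. The admissible time scale depends on $M$, which in turn depends on $\delta$ and $\eps$. One must check that $C_*$ is independent of $M$, that the cut-off Gevrey bound on $\mathfrak f_0$ stays $\mathcal O(1)$ for $M\lesssim|\ln\eps|$ (Remark~\ref{def:profilef0}), and that the polynomial and $M^{3/2}$ losses are absorbed without spending more than a fraction of the exponent $M|\ln\delta|$.
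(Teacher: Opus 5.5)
Your argument is correct and follows the paper's proof. The first bound is the same triangle inequality plus the convergent series $\sum_n(n+1)^3\delta^n$, and the second comes from \eqref{eq:error_bound} together with $\delta^M\le\eps^{N+4}$ for the prescribed $M$. The only difference is in how the polynomial factors are absorbed. The paper simply bounds $\sum_{j=M}^{2M}(j+2)^4\delta^j\lesssim M^5\delta^M$, keeps the full exponent, and absorbs the resulting $M^{11/2}\lesssim|\ln\eps|^{11/2}$ into the spare factor of $\eps$ in $\eps^{N+4}/\eps^3=\eps^{N+1}$. Your $\kappa$-splitting and the abandoned $\delta^{j/2}$ step are therefore unnecessary, though not wrong.
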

\begin{proof}
By \eqref{def:fappM} and \eqref{eq:fn_X_bound}, one has
 $$
 \begin{aligned}
   \|  f_{\rm app}^{\eps(M)}(t)\|_{\mathcal{X}^{1,k}} &\leq  \sum_{n=0}^M t^n     \| f_n^\eps(x,v)\|_{\mathcal{X}^{1,k}} \\
    &\lesssim \frac1{K \eps} \sum_{n=0}^M  (n+1)^3 \Big(\frac{C_* t M^{3/2}}{\eps^2}\Big)^n  \, . \end{aligned}
 $$
 The result follows since the sum is clearly uniformly bounded for all $t \displaystyle \leq \delta \frac{\eps^2}{C_* M^{3/2}}$ and $\delta$ small enough. 
  
 Similarly, appealing to the bound \eqref{eq:error_bound}, we get 
\begin{equation}
    \| \cE^{\eps(M)}(t) \|_{\mathcal{X}^{1,k}} \leq \frac{C_k C_* M^{3/2}}{K \eps^3} \sum_{j=M}^{2M} (j+2)^4 \delta^j \lesssim  \frac{M^{11/2}}{K\eps^3} \delta^M \, .
\end{equation}
By taking $M$ as in the statement, it is not hard to verify that indeed the error is smaller than~$  \eps^{N}$ for $\eps$ small enough. 
\end{proof}

\subsection{Macroscopic bounds for the approximate solution}

With the sequence $f_n^\eps$ rigorously constructed, we can now compute its macroscopic moments to verify the short-time instability bounds for the approximate solution. Recall that  $${\bf P}f = \Big(\rho[f] + u[f]\cdot v + \theta[f](\frac{|v|^2}{2}-1)\Big)\sqrt{\mu}\,,$$
where $\rho,u,\theta$ are the moments   defined in \eqref{eq:moments}. 

To localize our analysis near the vortex core where the cut-off $\chi(\eps^\gamma |x|)$ is identically $1$, we restrict ourselves to a region $|x| \sim  \eps$ and we exploit that $ \mathcal{X}^{1,k}\hookrightarrow L^\infty_x$. Moreover, in this region, the initial velocity is exactly $\displaystyle u_{\rm in}^\eps(x) = \frac{1}{2\pi} \frac{x^\perp}{|x|_\eps^2}\cdotp $

\begin{lemma}\label{lem:macro_bounds}
Let $f_{\rm app}^{\eps(M)}$ be the approximate solution up to order $3\leq M\leq C_0|\ln(\eps)|$, and let
$u[f^\eps_{\rm LO}]$ be the velocity of the regularized kinetic Lamb--Oseen vortex.
Let $C_*$ be the constant in Lemma~\ref{lem:recurrence_bounds}. 
Then there exist constants $0<a_0<b_0<\infty$, $c>0$, and $\delta>0$, independent of
$\eps$ and $M$, such that, setting
\[
    T_\eps:=\delta \frac{\eps^2}{C_*M^6} \,  \virgp
    \qquad
    A_\eps:=\big\{x\in\mathbb R^2\, / \,  a_0\eps\le |x|\le b_0\eps\big\}\, ,
\]
the macroscopic moments of $f_{\rm app}^{\eps(M)}$ satisfy, for all
$x\in A_\eps$ and $0<t\le T_\eps$,
\begin{align}
      \big|u_{\rm tan}[f_{\rm app}^{\eps(M)}](t,x)-u_{\rm tan}[f^{\eps}_{\rm LO}](t,x)\big|\geq c \frac{t}{\eps^3}\virgp \qquad
       \big|u_{\rm rad}[f_{\rm app}^{\eps(M)}](t,x)\big|
      &\geq c \frac{t^2}{\eps^5}\,  \virgp
 \label{eq:app_bound_u} \\
      \big|\nabla \cdot u[f_{\rm app}^{\eps(M)}](t,x)\big|
      &\geq c \frac{t^2}{\eps^6}\,  \virgp
 \label{eq:app_bound_div} \\
      \big|\rho[f_{\rm app}^{\eps(M)}](t,x)\big| +    \big|\theta[f_{\rm app}^{\eps(M)}](t,x)\big|
      &\geq c \frac{t^3}{\eps^7}\,  \cdotp
 \label{eq:app_bound_rho}
\end{align}
\end{lemma}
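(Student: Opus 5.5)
Since the moments are linear, I will compute the first few coefficients of the expansion $f^{\eps(M)}_{\rm app}=\sum_n t^n f_n^\eps$ explicitly on $A_\eps$ and bound everything else by Lemma~\ref{lem:recurrence_bounds}. On $A_\eps$ we have $\chi(\eps^\gamma|x|)=1$ for small $\eps$, so $f_0^\eps=\frac{1}{2\pi}\frac{x^\perp\cdot v}{|x|_\eps^2}\sqrt\mu$ exactly. By the scale invariance noted in Section~\ref{sec:naive}, for $j\le 3$ one has $f_j^\eps(x,v)=\eps^{-1-2j}F_j(x/\eps,v)$ there. Here $F_j$ are the $\eps$-independent terms of the recursion~\eqref{eq:fn_recurrence} with $\eps=1$, with $|x|_\eps$ replaced by $\sqrt{|y|^2+K^2}$ and no cut-off. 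The moments are then $\rho[f_j^\eps]=\eps^{-1-2j}\rho[F_j](x/\eps)$, and similarly for $u$ and $\theta$. The claimed lower bounds therefore reduce to showing that certain explicit moments of $F_1,F_2,F_3$ do not vanish on an annulus $a_0\le|y|\le b_0$, plus a tail estimate.

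\textbf{Step 1 (explicit low orders).} As in Section~\ref{sec:naive}, ${\bf P}f_1^\eps=0$. Parity in $v$ kills the $\rho,\theta$ moments of the transport term, and ${\rm div}\,u_{\rm in}^\eps=0$ kills its $u$ moment. The collision term is killed by ${\bf P}\Gamma=0$. Hence $u[f_{\rm app}]=u_{\rm in}^\eps+t^2u[f_2^\eps]+t^3u[f_3^\eps]+\dots$, while $u[f^\eps_{\rm LO}]=u_{\rm in}^\eps+\nu t\Delta u^\eps_{\rm in}+O(t^2\eps^{-5})$ by~\eqref{eq:TayloruLO}. For the tangential component, on $A_\eps$ we have $|\Delta u^\eps_{\rm in}\cdot e_\theta|\ge c\eps^{-3}$, since $\Delta\big(x^\perp/(|x|^2+K^2\eps^2)\big)=-8K^2\eps^2x^\perp/(|x|^2+K^2\eps^2)^3\ne0$. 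All other terms are $O(t^2\eps^{-5})\le\delta\, t\eps^{-3}$.

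For $u[f_2^\eps]$, the $L$ and collision contributions of the recursion have zero macroscopic projection. Hence $2u[f_2^\eps]=-\eps^{-1}\int v\,(v\cdot\nabla_x f_1^\eps)\sqrt\mu\,dv=-\eps^{-1}\nabla_x\cdot\int v\otimes v\, f_1^\eps\sqrt\mu\,dv$, i.e.\ the divergence of the stress of $f_1^\eps$. The stress gets contributions from the transport part $-\eps^{-1}v\cdot\nabla f_0$ and from $\eps^{-1}\Gamma(f_0,f_0)$; the latter is the nonzero radial "centrifugal" part. I will compute it explicitly. For hard spheres $\Gamma(u\cdot v\sqrt\mu,u\cdot v\sqrt\mu)$ has stress moment $\kappa(u\otimes u-\frac12|u|^2{\mathbb I})$ with an explicit constant $\kappa\neq0$. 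With $u=u_{\rm in}^\eps$ tangential, its divergence is radial and nonzero on an annulus, giving $|u_{\rm rad}[f_2^\eps]|\ge c\eps^{-5}$ on suitable $A_\eps$. Then $\nabla\cdot u[f_2^\eps]=\eps^{-6}(\cdot)$ is a radial function $\frac1r(r g(r))'$, which is nonvanishing on a (possibly smaller) annulus; I will shrink $[a_0,b_0]$ to a common interval where all needed quantities are bounded away from zero.

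For $\rho,\theta$, the first nonzero moments occur at order $t^3$. Mass and energy balance in the recursion give $3\rho[f_3^\eps]=-\eps^{-1}\nabla\cdot u[f_2^\eps]$ and $3\theta[f_3^\eps]=-\eps^{-1}\nabla\cdot q[f_2^\eps]$, with $q$ the energy flux. The density is therefore $\sim\eps^{-7}$ times the divergence already shown to be nonzero, so I do not need a separate computation for $\theta$.

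\textbf{Step 2 (tails).} For $j\ge3$ (respectively $j\ge4$ for $\rho,\theta$) I bound $|{\rm moment}[f_j^\eps](x)|\lesssim\|f_j^\eps\|_{\mathcal X^{1,k}}\lesssim j^3(C_*M^{3/2})^j\eps^{-2j-1}$, using $\mathcal X^{1,k}\hookrightarrow L^\infty_x$ and \eqref{eq:fn_X_bound}. For derivatives, as needed for the divergence, I instead use directly the pointwise bounds defining $\mathcal P_\eps(j)$: a gradient costs $|x|_\eps^{-1}\sim\eps^{-1}$ times $j$ times the profile bound. Summing the geometric series with $t\le T_\eps=\delta\eps^2/(C_*M^6)$ gives a tail $\le C\delta\,t^{j_0}\eps^{-2j_0-1}$ (with an extra $\eps^{-1}$ for the divergence). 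The $M^6$ comfortably absorbs the polynomial factors $j^4$ and $M^{3/2}$ powers. Choosing $\delta$ small then yields \eqref{eq:app_bound_u}--\eqref{eq:app_bound_rho}.

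The main obstacle is Step 1's explicit nonvanishing. This requires computing the stress moment of $\Gamma(f_0,f_0)$ and of $v\cdot\nabla f_0$ (the transport stress could partially cancel), and checking that the resulting radial profile and its divergence are nonzero on a common annulus uniformly in $K$. I would first try reducing to the $K$-rescaled variable $y/K$, where everything becomes an explicit rational function, and locate a sign by hand. If an exact collision constant is unavailable, I would argue via the positivity of the viscosity-type coefficient $\int\Gamma(\cdot)\,{\bf P}^\perp$-moments.
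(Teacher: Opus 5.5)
Your architecture is the paper's: ${\bf P}f_1^\eps=0$; the tangential gap comes from $\nu t\Delta u^\eps_{\rm in}$; $u[f_2^\eps]$ is $-\frac1{2\eps}$ times the divergence of the stress of $f_1^\eps$; the mass balance $3\rho[f_3^\eps]=-\eps^{-1}\nabla\cdot u[f_2^\eps]$ gives the density; and the tails are controlled by the profile bounds with $T_\eps\sim\eps^2/M^6$. Your local rescaling $f_j^\eps=\eps^{-1-2j}F_j(x/\eps,v)$ is valid where $\chi\equiv1$, since all operators are local in $x$. It is just cleaner bookkeeping of the same computation. You are also right that the annulus must avoid the zero $|x|=K\eps/\sqrt2$ of $\nabla\cdot u[f_2^\eps]$. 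Uniformity in $K$ is not needed, since $a_0,b_0$ may depend on the fixed $K$.

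\textbf{The gap.} The gap is the step you flag yourself: the claim that the stress moment of $\Gamma(f_0^\eps,f_0^\eps)$ equals $\kappa(u\otimes u-\frac12|u|^2\mathbb I)$ with $\kappa\neq0$ is asserted, not proved. It carries three of the four bounds. The transport stress cannot help, because $\nabla\cdot(\nabla u^\eps_{\rm in}+(\nabla u^\eps_{\rm in})^T)=\Delta u^\eps_{\rm in}$ is tangential and divergence free. Hence if $\kappa=0$, then $u_{\rm rad}[f_2^\eps]\equiv0$, and the radial, divergence and density lower bounds all collapse. You need two things:
\begin{itemize}
\item[(i)] The tensor form. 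It follows from $O(2)$-equivariance of $u\mapsto\int v\otimes v\,\Gamma(g,g)\sqrt\mu\,dv$; reflections exclude $u\otimes u^\perp+u^\perp\otimes u$. Energy conservation then makes the trace vanish.
\item[(ii)] $\kappa\neq0$. The paper proves $\mathsf c_1>0$ by an explicit hard-sphere computation in Appendix~\ref{sct:collision constant}. It passes to centre-of-mass and relative velocities and reduces to $\int|w|\sqrt\mu(w)\,w_1^2(w_1^2-w_2^2)\,dw\neq0$.
\end{itemize}

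\textbf{Closing (ii) along your fallback.} Your suggestion can be made into a short proof that does not depend on the kernel. Since $\mathcal Q(\mu e^{s v_1},\mu e^{s v_1})=0$ pointwise for every $s$, the $s^2$ coefficient gives $\Gamma(g,g)=-\frac12 L(v_1^2\sqrt\mu)$ for $g=v_1\sqrt\mu$. Set $A:=(v_1^2-v_2^2)\sqrt\mu$. Using $v_1^2\sqrt\mu=\frac12A+\frac12|v|^2\sqrt\mu$, $|v|^2\sqrt\mu\in\ker L$ and self-adjointness of $L$,
\[
\kappa=\int_{\mathbb R^2}(v_1^2-v_2^2)\,\Gamma(g,g)\sqrt\mu\,dv=-\tfrac14\langle LA,A\rangle_{L^2_v}>0 ,
\]
because $A\in(\ker L)^\perp\setminus\{0\}$ and $-L$ is positive there. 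This gives $\kappa>0$ for any kernel with a coercive linearized operator, whereas the paper's computation is specific to hard spheres.

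\textbf{Minor slip.} In Step 1 you have the two cancellations reversed. The transport term $v\cdot\nabla_x f_0^\eps$ is even in $v$, so parity kills its $u$-moment. It is ${\rm div}\,u^\eps_{\rm in}=0$ that kills its $\rho$- and $\theta$-moments. The conclusion ${\bf P}f_1^\eps=0$ is still correct.
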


\begin{proof}
By linearity of the projection operator, we can study the macroscopic moments of the approximate solution order by order. From now on, since $x \in A_\eps$, the cut-off for large~$|x|$ is no longer present. Before proceeding, we extract the precise pointwise bounds near the vortex core from Lemma~\ref{lem:recurrence_bounds}.   Let
\begin{equation}
    \mathfrak m[g]:=(\rho[g],u[g],\theta[g])\, .
\end{equation}
Since~$W_x\hookrightarrow L^\infty_x$ and the $\mathcal{X}^{1,k}$ norm controls the Wiener norm after integrating the Gaussian against the macroscopic moments, we deduce that 
\begin{align}
\label{bd:macrofn}
   \big |\mathfrak m[t^n f_n^\eps]\big|(x) \lesssim \frac{1}{\eps} (n+1)^3\left(\frac{tC_* M^{3/2}}{\eps^2}\right)^n  \, .
\end{align}
   Moreover, we shall also use the following elementary pointwise consequence of the definition of
$\mathcal P_\eps(n)$. If a function~$g\in\mathcal P_\eps(n)$ has profile bound $\mathfrak g_n$, then, for every $x\in A_\eps$ and $|\alpha|\le 1$ we have
\begin{equation}\label{eq:local_pointwise_Pn}
    \big|\nabla_x^\alpha \mathfrak m[g](x)\big|
    \le C\,(n+1)^2\,\mathfrak g_n\,
    \eps^{-2n-1-|\alpha|}\, ,
    \qquad x\in A_\eps \,,
\end{equation}
where the constant $C>0$ only depends on $a_0,b_0,K$. Indeed, on $A_\eps$ one has $|x|_\eps\sim\eps$, and each term in the
definition of $\mathcal P_\eps(n)$ is therefore of size $\eps^{-2n-1}$.
One spatial derivative can fall either on the explicit powers of $|x|_\eps$, on
$r=x/|x|_\eps$, or on $z=\eps^\gamma x$. The first two contributions cost at most
one additional factor $\eps^{-1}$, while the $z$-derivative is lower order in the core. Since the Gaussian velocity weights are integrable against the moment polynomials, the bound above follows.

Consequently, using $\mathfrak f_n\le (C_*M^{3/2})^n$, we can bound the tail of the expansion. Namely, for $|\alpha|\le1$ and $x\in A_\eps$ we have
\begin{equation}\label{eq:macro_tail_derivative0}
    \left|
    \nabla_x^\alpha
    \mathfrak m\!\left[\sum_{n=N+1}^M t^n f_n^\eps\right](x)
    \right|
    \le
    C
    \eps^{-1-|\alpha|}
    \left(\frac{C_*M^{3/2}t}{\eps^2}\right)^{N+1}\sum_{j=0}^{M-N-1} (N+2+j)^3 \left( \frac{t C_* M^{3/2}}{\eps^2} \right)^j.
\end{equation}
Thus, provided $t\le \delta\eps^2/(C_*M^{3/2})$ and $\delta>0$ is sufficiently small we infer 
\begin{equation}\label{eq:macro_tail_derivative}
    \left|
    \nabla_x^\alpha
    \mathfrak m\!\left[\sum_{n=N+1}^M t^n f_n^\eps\right](x)
    \right|
    \le
    C
    \eps^{-1-|\alpha|}N^3
    \left(\frac{C_*M^{3/2}t}{\eps^2}\right)^{N+1}\, .
\end{equation}
This formulation allows us to effectively separate the exact macroscopic moments from the higher-order corrections.

Moreover, in the following computations,  we use the symmetrized notation
\begin{equation}
     \Gamma_{\rm sym}(f_k,f_j):=\begin{cases}
         \Gamma(f_k,f_j)+\Gamma(f_j,f_k)\qquad &\text{ if } k\neq j\\
         \Gamma(f_k,f_k) \qquad &\text{ if } k=j.
     \end{cases}
\end{equation}   
Accordingly, the recurrence in  \eqref{eq:fn_recurrence} can be rewritten as
\begin{align}
     \sum_{k=0}^{n-1}\Gamma(f_k^\eps,f_{n-1-k}^\eps)= \sum_{k=0}^{\lfloor\frac{n-1}{2}\rfloor}\Gamma_{\rm sym}(f_k^\eps,f_{n-1-k}^\eps).
\end{align}   
This is useful because the collision invariants imply that
\begin{equation}
    {\bf P}\Gamma_{\rm sym}(f_k,f_{n-1-k})=0.
\end{equation}
We now proceed by checking our expansion order by order.

\medskip \noindent $\diamond$ \textbf{Order $n=0$.}
We have $f_0^\eps(x,v) = u_{\rm in}^\eps(x) \cdot v \sqrt{\mu}$. By the parity of the Gaussian, its moments are $\rho_0 = \theta_0 = 0$ and $u_0(x) = u_{\rm in}^\eps(x)$, which is a purely tangential vector field. 

\medskip \noindent $\diamond$ \textbf{Order $n=1$.}
Using \eqref{eq:fn_recurrence} and recalling $L(f_0^\eps) = 0$, we have $$f_1^\eps = \eps^{-1} \big( -v \cdot \nabla_x f_0^\eps + \Gamma(f_0^\eps, f_0^\eps) \big).$$ Since ${\bf P} \Gamma(f,f) = 0$, it is enough to compute
\begin{equation}
    {\bf P}(v \cdot \nabla_x f_0^\eps) = {\bf P} \big( v \cdot \nabla_x (u_{\rm in}^\eps \cdot v) \sqrt{\mu} \big) = \sum_{j,k} \partial_j u_{\rm in, k}^\eps {\bf P}(v_j v_k \sqrt{\mu}) \, .
\end{equation}
The velocity component of its projection contains third Gaussian moments and therefore
vanishes. Thus,~$u[f_1^\eps] = 0$. For the density, we must have $j=k$ (since otherwise we use $\displaystyle \int v_j v_k \mu = 0$), yielding
\begin{align}
 \rho[f_1^\eps] = -\eps^{-1} \nabla \cdot u_{\rm in}^\eps = 0 \, ,
\end{align}
where we used that the regularized initial data $u_{\rm in}^\eps$ is divergence-free. Similarly, $\theta[f_1^\eps] = 0$. Therefore, the first-order correction is purely microscopic, ${\bf P} f_1^\eps = 0$, and 
\begin{align}
\label{eq:f0+tf1}
    u[f_0^\eps + t f_1^\eps] = u[f_0^\eps] = u_{\rm in}^\eps \, .
\end{align}
We now evaluate the difference with the regularized kinetic Lamb--Oseen vortex. The Taylor expansion of the heat semigroup \eqref{eq:TayloruLO} evaluated at $|x| \sim \eps$ is:
\begin{align}
    u[f^\eps_{\rm LO}](t,x) = u_{\rm in}^\eps(x) + \nu t \Delta_x u_{\rm in}^{\eps}(x) + \mathcal{O}\big((\nu t)^2\|\Delta^2 u_{\rm in}^\eps\|_{L^\infty(\mathbb{R}^2)}\big) \, ,
\end{align}
where \begin{equation}\label{eq:laplacian_uin}
    |\Delta u_{\rm in}^\eps(x)|
    =
    \frac{4K^2\eps^2}{\pi}
    \frac{|x^\perp|}{|x|_\eps^6}\sim \frac{1}{\eps^{3}} \qquad \text{ for } x\in A_\eps\, .
\end{equation}
Since  $$\|\Delta^2u_{\rm in}^\eps\|_{L^\infty(\mathbb{R}^2)}
\lesssim \eps^{-5},$$ 
using \eqref{eq:f0+tf1} and bounding the approximation tail via \eqref{eq:macro_tail_derivative} with $N=1$, we deduce:
\begin{align}
    \big| u[f_{\rm app}^{\eps(M)}] - u[f_{\rm LO}^\eps] \big| 
    &\geq \nu t \big|\Delta_x u_{\rm in}^\eps\big| - \mathcal{O}\Big(\frac{(\nu t)^2}{\eps^5}\Big) - \Big| u\Big[ \sum_{n=2}^M t^n f_n^\eps \Big] \Big|  \\
    &\approx \frac{t}{\eps^3} - \mathcal{O}\Big( \frac{t^2 M^3}{\eps^5} \Big) \, .
\end{align}
Having that $t \leq T_\eps \ll \eps^2 / M^3$, we can safely absorb the error and therefore we get the desired lower bound for the tangential velocity difference. Indeed, the velocity $u[f_0^\eps+tf_1^\eps]$ is exactly tangential and thus we have proved the first part of \eqref{eq:app_bound_u}.

\medskip \noindent $\diamond$ \textbf{Order $n=2$.}
By \eqref{eq:fn_recurrence}, $$f_2^\eps = \frac{1}{2} \eps^{-1} \big( \eps^{-1}L(f_1^\eps) - v \cdot \nabla_x f_1^\eps + \Gamma_{\rm sym}(f_0^\eps, f_1^\eps) \big).$$ Since ${\bf P} L = {\bf P} \Gamma_{\rm sym} = 0$, the macroscopic moments are  driven by $-\frac{1}{2\eps} {\bf P} (v \cdot \nabla_x f_1^\eps)$.

Because $f_1^\eps$ has even parity in $v$, multiplying by $v$ makes it odd. Thus, the density and temperature moments vanish $\rho[f_2^{\eps}] = \theta[f_2^{\eps}] = 0$. For the velocity moment, we have
\begin{align}
    u[f_2^\eps] = -\frac{1}{2\eps} \int_{\mathbb{R}^2} v (v \cdot \nabla_x f_1^\eps)\sqrt{\mu}(v) dv = -\frac{1}{2\eps} \nabla_x \cdot \mathcal{T}_1^\eps \, ,
\end{align}
where $$\mathcal{T}_1^\eps := \int_{\mathbb{R}^2} (v\otimes v) f_1^\eps \sqrt{\mu}(v) dv$$ is the stress tensor generated at the previous order. Recalling that $f_1^\eps = \eps^{-1} \big( -v \cdot \nabla_x f_0^\eps + \Gamma(f_0^\eps, f_0^\eps) \big)$, we can compute the contributions of the linear and nonlinear parts directly. 

For the linear part, since $f_0^\eps = u_{\rm in}^\eps \cdot v \sqrt{\mu}$, we are evaluating a fourth-order moment of the Gaussian, which gives us
\begin{align}
    \frac{1}{\eps} \int_{\mathbb{R}^2} v_j v_k (-v \cdot \nabla_x f_0^\eps) \sqrt{\mu}(v) dv &= -\frac{1}{\eps}\sum_{\ell,m}\partial_l u_{{\rm in},m}^\eps\int_{\mathbb{R}^2}    v_j v_k v_\ell v_m \mu \, dv\\
    &=-\frac{1}{\eps} \big( \partial_j u_{{\rm in}, k}^\eps + \partial_k u_{{\rm in}, j}^\eps  + \delta_{jk} \nabla \cdot u_{\rm in}^\eps)\big) \, .
\end{align}
Since $\nabla \cdot u_{\rm in}^\eps = 0$, the divergence of the tensor above yields exactly $-\eps^{-1} \Delta u_{\rm in}^\eps.$

For the nonlinear part, we must evaluate the stress tensor 
\begin{align}
\mathcal{B}_{jk}^\eps(u_{\rm in}^\eps) := \frac{1}{\eps} \int_{\mathbb{R}^2} v_j v_k \Gamma( u_{\rm in}^\eps \cdot v \sqrt{\mu}, u_{\rm in}^\eps \cdot v \sqrt{\mu}) \sqrt{\mu} dv.
\end{align}
The tensor $\mathcal{B}^\eps(u_{\rm in}^\eps)$ depends quadratically on $u_{\rm in}^\eps$ and is  identified with a symmetric $2\times 2$ matrix of the form 
\begin{align}
\begin{pmatrix}
a_1 (u_{\rm in,1}^\eps)^2+b_1 (u_{\rm in,2}^\eps)^2 & c\, u_{\rm in,1}^\eps u_{\rm in,2}^\eps\\
c \,u_{\rm in,1}^\eps u_{\rm in,2}^\eps & a_2 (u_{\rm in,1}^\eps)^2+b_2 (u_{\rm in,2}^\eps)^2
\end{pmatrix}
\end{align}
for suitable constants $a_i,b_i,c$. Then, since the collision operator is invariant by rotation and reflection, for any orthogonal matrix $Q\in O(2)$ we know that 
\begin{align}
    \mathcal{B}^\eps(Qu)=Q\mathcal{B}^\eps(u)Q^T.
\end{align}
Thus,  for a fixed $x$, we can choose a reference frame where $u_2=0$. In that case it is trivial to deduce that $\mathcal{B}^\eps(u^\eps_{\rm in})$ is a linear combination of $u_{\rm in}^\eps\otimes u_{\rm in}^\eps$ and $|u_{\rm in}^\eps|^2 \mathbb{I}$. This is clearly independent on the particular reference frame and therefore there exist two constants $\mathsf{c}_1, \mathsf{c}_2$ (depending only on the collisions) such that
\begin{equation}
\label{def:Buin}
    \mathcal{B}^\eps(u_{\rm in}^\eps) = \frac{1}{\eps} \Big( \mathsf{c}_1(u_{\rm in}^\eps \otimes u_{\rm in}^\eps) + \mathsf{c}_2 |u_{\rm in}^\eps|^2 \mathbb{I} \Big) \, .
\end{equation}
Moreover, by energy conservation we also know that $\mathrm{tr}(\mathcal{B}^\eps(u))=0$ for any $u$, meaning that 
$$\mathsf{c}_1+2\mathsf{c}_2=0.$$
For the hard-sphere collision kernel, we can guarantee that ${\sf c}_1> 0$ as we show in Appendix~\ref{sct:collision constant}, meaning that the contribution from this stress tensor is not trivial.
Taking the divergence, and exploiting again that $u_{\rm in}^\eps$ is divergence-free, we obtain
\begin{equation}
    \nabla \cdot (\mathcal{B}^\eps(u_{\rm in}^\eps)) = \frac{\mathsf{c}_1}{\eps} (u_{\rm in}^\eps \cdot \nabla) u_{\rm in}^\eps + \frac{\mathsf{c}_2}{\eps} \nabla |u_{\rm in}^\eps|^2 \, .
\end{equation}
Combining these components, the divergence of the stress tensor $\mathcal{T}_1^\eps$ is then given by 
\begin{equation}
    u[f_2^\eps]=-\frac{1}{2\eps}\nabla \cdot \mathcal{T}_1^\eps = \frac{1}{2\eps^2}\Big( \Delta u_{\rm in}^\eps - \mathsf{c}_1 (u_{\rm in}^\eps \cdot \nabla) u_{\rm in}^\eps - \mathsf{c}_2\nabla |u_{\rm in}^\eps|^2 \Big)\, .
\end{equation}
Here we know that  $\Delta u_{\rm in}^\eps$ is purely tangential. However, 
\begin{align}
(u_{\rm in}^\eps \cdot \nabla)u_{\rm in}^\eps = -\frac{1}{4\pi^2} \frac{x}{|x|_\eps^4}\, ,\,  \qquad \nabla |u_{\rm in}^\eps|^2 = \frac{x}{4\pi^2}  \frac{2|x|_\eps^2 - 4|x|^2}{|x|_\eps^6}
\end{align}
are both radial. Thus, collecting the radial contributions and using that $\mathsf{c}_2=-\mathsf{c}_1/2$ we have
\begin{equation}
    u_{\rm rad}[f_2^\eps](x) = \frac{1}{8\pi^2\eps^2} \left( \frac{\mathsf{c}_1}{|x|_\eps^4} - \mathsf{c}_2 \frac{2|x|_\eps^2 - 4|x|^2}{|x|_\eps^6} \right) |x| \, =\frac{\mathsf{c}_1K^2}{4\pi^2}\frac{|x|^2}{|x|_\eps^6}.
\end{equation}
Hence, there exists $a_0,b_0$  such that the coefficient above does not vanish for all $x\in A_{\eps}$. Consequently,
\begin{equation}
    |u_{\rm rad}[f_2^\eps](x)| \approx \frac{C_{\rm rad}}{\eps^5}  \, , \qquad \text{ for } x\in A_\eps,
\end{equation}
where $C_{\rm rad}>0$ is a constant depending on $\mathsf{c}_1$, and $K$. Since $u_{\rm rad}[f_0^\eps+tf_1^\eps+t^2f_2^\eps]=t^2u_{\rm rad}[f_2^\eps]$, we can argue as before and use \eqref{eq:macro_tail_derivative} with $N=2$ to conclude that 
\begin{equation}
    |u_{\rm rad}[f_{\rm app}^{\eps(M)}](x)| \approx \frac{t^2}{\eps^5} - \mathcal{O}\Big( \frac{t^3 M^{9/2}}{\eps^7} \Big) \, , \qquad \text{for } x\in A_\eps
\end{equation}
Since $t \leq T_\eps \ll \eps^2 / M^{9/2}$, the error is  dominated by the main term and the proof of \eqref{eq:app_bound_u} is over.

Furthermore, let us compute the error in the divergence-free condition. Since $\nabla_x\cdot \Delta_x u_{\rm in}^\eps=0$, we only need to compute the divergence of the radial components found above. Applying the divergence operator, and observing that each spatial derivative increases the singularity degree by $1$, for $x\in A_\eps$ we get
\begin{equation}
  \big  |\nabla \cdot u[f_2^\eps](x)\big| = \frac{1}{8\pi^2\eps^2} \left| -\mathsf{c}_1 \nabla \cdot \Big( \frac{x}{|x|_\eps^4} \Big) - \mathsf{c}_2 \nabla \cdot \Big( x\frac{2|x|_\eps^2 - 4|x|^2}{|x|_\eps^6} \Big) \right| \approx \eps^{-6} \, .
\end{equation}
Multiplying by $t^2$ and explicitly bounding the tail as before, we get
\begin{equation}
  \big  |\nabla \cdot u[f_{\rm app}^{\eps(M)}](x)\big| \approx \frac{t^2}{\eps^6} - \mathcal{O}\Big( \frac{t^3 M^{9/2}}{\eps^8} \Big) \, ,  \qquad \text{for } x\in A_\eps,
\end{equation}
thus proving \eqref{eq:app_bound_div} in the time-scale under consideration.

\medskip \noindent $\diamond$ \textbf{Order $n=3$.}
By \eqref{eq:fn_recurrence}, $f_3^\eps$ is given by
\begin{equation}
    f_3^\eps = \frac{1}{3} \bigg[ \eps^{-2}L(f_2^\eps) - \eps^{-1}v\cdot \nabla_x f_2^\eps + \eps^{-1} \big( \Gamma_{\rm sym}(f_0^\eps, f_2^\eps) + \Gamma(f_1^\eps, f_1^\eps) \big) \bigg] \, .
\end{equation}
We are interested in the macroscopic density $\rho[f_3^\eps]$ and temperature $\theta[f_3^\eps]$. These moments are driven by the transport of the second-order correction and one has that
\begin{align}
    \rho[f_3^\eps] = -\frac{1}{3\eps} \nabla_x \cdot \int_{\RR^2} v f_2^\eps \sqrt{\mu} dv = -\frac{1}{3\eps} \nabla_x \cdot u[f_2^\eps] \, , \\
    \theta[f_3^\eps] = -\frac{1}{3\eps} \nabla_x \cdot \int_{\RR^2} v \Big(\frac{|v|^2}{2} - 1\Big) f_2^\eps \sqrt{\mu} dv =: -\frac{1}{3\eps} \nabla_x \cdot q[f_2^\eps] \, ,
\end{align}
where $q[f_2^\eps]$ represents the heat flux vector generated at order 2. 

We computed $\nabla_x \cdot u[f_2^\eps]$ in the previous step and found it scales as $\eps^{-6}$, and therefore $\rho[f_3^\eps]\approx \eps^{-7}$. Analogously, one can compute the heat flux vector driving the temperature  appealing to higher-order moments of the Gaussian and introducing a new stress tensor arising from the nonlinearity. One can perform the detailed computations for this part to deduce that also $|\theta[f_3^\eps]|\approx \eps^{-7}$, but since clearly $|\theta[f_3^\eps]|\lesssim \eps^{-7}$ we only state that for $|x|\sim \eps$
\begin{equation}
    |\rho[f_3^\eps]| + |\theta[f_3^\eps]| \approx \eps^{-7} \, .
\end{equation}
Applying \eqref{eq:macro_tail_derivative} for $N=3$, we obtain
\begin{equation}
    |\rho[f_{\rm app}^{\eps(M)}]| + |\theta[f_{\rm app}^{\eps(M)}]| \approx \frac{t^3}{\eps^7} - \mathcal{O}\Big( \frac{t^4 M^6}{\eps^9} \Big) \, .
\end{equation}
Once again, within the required time-scale $t \leq T_\eps \ll \eps^2 / M^6$ we recover the desired  bound \eqref{eq:app_bound_rho} and thus  conclude the proof.
\end{proof}
\section{Correcting the approximate solution}\label{eq:correction}
 In this section we prove that the function~$f_{\rm app}^{\eps(M)}$ constructed in~(\ref{def:fappM})  is indeed an approximate solution to the rescaled Boltzmann equation~\eqref{eq:feps}. We therefore look for the solution to \eqref{eq:feps} under the form
\begin{align}
\label{def:g}
    f^\eps(t,x,v)= (f_{\rm app}^{\eps(M)}+g^\eps)(t,x,v) \, ,
\end{align}
where~$M$ will be chosen large enough at the end (of the order of~$|\ln (\eps)|$).   From a direct computation we   observe that $g^\eps$ solves
\begin{equation}
\label{eq:g}
  \left\{ \begin{aligned}
\de_t g^\eps+\eps^{-1}v\cdot \nabla_x g^\eps-\eps^{-2}Lg^\eps+\mathcal E^{\eps(M)}=\eps^{-1}\cL^\eps_{\rm app}(g^\eps)+\eps^{-1}\Gamma(g^\eps,g^\eps)\\
        g^\eps|_{t=0}=0 \, ,
\end{aligned}
        \right.
\end{equation}
where the error $\mathcal E^{\eps(M)}$ is defined as in~(\ref{def:ErrorM}) by 
\begin{equation}
\label{def:R}
    \mathcal E^{\eps(M)}\coloneqq\de_tf^{\eps(M)}_{{\rm app}}+\eps^{-1}\big(v\cdot \nabla_xf^{\eps(M)}_{{\rm app}}-\Gamma(f^{\eps(M)}_{{\rm app}},f^{\eps(M)}_{{\rm app}})\big)-\eps^{-2}Lf^{\eps(M)}_{{\rm app}}\,,
\end{equation}
and the linearization of $\Gamma$ at $f^{\eps(M)}_{\rm app}$ is given by 
\begin{align}\label{def:Lapp}
    \cL^\eps_{\rm app}(g):=\Gamma(f^{\eps(M)}_{\rm app},g)+\Gamma(g,f^{{\eps(M)}}_{\rm app})\,.
\end{align}
  Our goal is to prove that $g^\eps$ goes to zero as~$\eps$ goes to zero: this will require choosing~$M \sim |\ln(\eps)|$.  Let us indeed prove the following result.
\begin{proposition}\label{prop:solvegeps}
Let~$k>2$ be given. There is a constant~$C>1$ such that the following holds.   Choosing~$K$ large enough and~$\delta$ small enough,   there exists $\lambda >0$ such that setting $M={M(\eps)=\lceil\lambda|\ln (\eps)|\rceil}$ and~$T_\eps=\delta\eps^2/  M^6$, then there is a unique solution $g^\eps$ to~{\rm(\ref{eq:g})} on $[0,T_\eps]$ such that 
    \begin{align}
    \label{bd:nonlinear_prop}
   \forall \, 0\leq t \leq    T_\eps \, , \quad   \|g^\eps(t)\|_{{\mathcal X}^{1,k}}\ {+\eps\,\|\nabla_xg^\eps(t)\|_{{\mathcal X}^{1,k}} }\leq   {C\frac{M^\frac{11}2}{K\eps^3}t\left(  \frac{C_*  {M}^\frac32 t}{\eps^2}\right)^M}
   \leq \frac{\delta^{M+1}}{\eps \sqrt M} \, ,
    \end{align}
    where the last inequality holds for $\eps$ sufficiently small.    In particular, 
    \begin{align}
        \sup_{0\leq t\leq T_{\eps}}\left(
 \|g^\eps(t)\|_{\mathcal X^{1,k}}
 +\eps\|\nabla_x g^\eps(t)\|_{\mathcal X^{1,k}}
\right)
\longrightarrow 0,
\qquad \eps\to0 .
    \end{align}
    Consequently, for the full solution $
        f^\eps=f_{\rm app}^{\eps(M)}+g^\eps,$
 the macroscopic lower bounds obtained in Lemma~\ref{lem:macro_bounds} for
$f_{\rm app}^{\eps(M)}$ also hold for  $f^\eps$, possibly with a smaller
constant in \eqref{eq:app_bound_u}--\eqref{eq:app_bound_rho}.
\end{proposition}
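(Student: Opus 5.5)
We run a Duhamel fixed point for $g^\eps$ built on the linearized Boltzmann semigroup. Let $U^\eps(t)$ denote the semigroup generated by $-\eps^{-1}v\cdot\nabla_x+\eps^{-2}L$. We work in the space
\[
Y:=\Big\{g\in C([0,T_\eps];\mathcal X^{1,k})\;/\;\|g\|_{Y}:=\sup_{t\le T_\eps}\omega(t)^{-1}\big(\|g(t)\|_{\mathcal X^{1,k}}+\eps\|\nabla_xg(t)\|_{\mathcal X^{1,k}}\big)<\infty\Big\},
\]
with weight $\omega(t):=\frac{M^{11/2}}{K\eps^3}t\big(\frac{C_*M^{3/2}t}{\eps^2}\big)^M$. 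The solution is the fixed point of
\[
\Lambda(g)(t):=\int_0^tU^\eps(t-s)\Big[-\mathcal E^{\eps(M)}(s)+\eps^{-1}\mathcal L^\eps_{\rm app}(g(s))+\eps^{-1}\Gamma(g(s),g(s))\Big]\,ds .
\]
The structural fact we exploit is that $T_\eps\ll\eps^2$. On this time window we never use dissipation or spectral gap. We only need $U^\eps$ to be bounded on $\mathcal X^{1,k}$, uniformly in $\eps$, on times of order $\eps^2$. Rescaling time by $\eps^2$ and space by $\eps$ reduces this to the classical $\eps=1$ bound of Ukai/Guo type for $e^{t(-v\cdot\nabla_x+L)}$ on $L^{\infty,k}_v$-valued spaces. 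Here $x$-derivatives commute, and the Wiener norm is handled Fourier mode by mode. I will take this bound, together with the bilinear estimate $\|\Gamma(f,g)\|\lesssim\|\langle v\rangle^{-1}$-gain$\|$, from the appendix.

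The first key ingredient is the smoothing/loss of $\Gamma$ in $L^{\infty,k}$. For hard spheres, $\Gamma$ loses a factor $\langle v\rangle$. The standard Duhamel trick absorbs this through the collision frequency $\nu(v)\sim\langle v\rangle$ in $\eps^{-2}L=-\eps^{-2}\nu+\eps^{-2}K$. This gives
\[
\Big\|\int_0^t U^\eps(t-s)\eps^{-1}\Gamma(a,b)\,ds\Big\|\lesssim \eps\sup_s\|a\|\,\|b\|,
\]
since $\int_0^t\eps^{-2}\nu e^{-\nu(t-s)/\eps^2}ds\le1$. We can also simply pay $t/\eps\le \eps M^{-6}$ if we avoid the weight. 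The second ingredient is that the algebra property $\|fg\|_{W\cap H^1}\lesssim\|f\|\|g\|$ holds in 2D for $W_x\cap H^1_x$, so $\Gamma$ is bilinear bounded on $\mathcal X^{1,k}$.

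With these tools we bound the three pieces. For the source term, Lemma~\ref{lem:recurrence_bounds} and \eqref{eq:error_bound} give $\int_0^t\|\mathcal E^{\eps(M)}\|\lesssim \frac{M^{11/2}}{K\eps^3}\,t\,(C_*M^{3/2}t/\eps^2)^M\le \omega(t)$, since the sum over $j\ge M$ is geometric and dominated by its first term for $t\le T_\eps$. The same bound holds for $\eps\nabla_x$ applied to it, because $\eps\nabla_x$ acting on $\mathcal P_\eps(n)$ costs $O(n)$ near the core. For the linear term, Corollary~\ref{cor:bound on the approximation} gives $\|f_{\rm app}^{\eps(M)}\|_{\mathcal X^{1,k}}\lesssim (K\eps)^{-1}$, and likewise $\eps\|\nabla_x f_{\rm app}^{\eps(M)}\|\lesssim (K\eps)^{-1}$. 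Hence $\eps^{-1}\mathcal L^\eps_{\rm app}$ has size $\lesssim (K\eps^2)^{-1}$ times the loss $\langle v\rangle$. Integrated against a weight growing like $t^{M+1}$, this contributes $\lesssim \frac{t}{K\eps^2(M+1)}\|g\|_Y\,\omega(t)$, which is $\le\frac{\delta}{KM^7}\|g\|_Y\,\omega$. The extra factor $1/(M+1)$ comes from $\int_0^t s^{M+1}ds$. The derivative part $\eps\nabla_x\mathcal L(g)$ splits via Leibniz into $\Gamma(\eps\nabla f_{\rm app},g)+\Gamma(f_{\rm app},\eps\nabla g)$, both controlled by the same norm. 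The quadratic term is negligible because $\omega(T_\eps)\le\delta^{M+1}/(\eps\sqrt M)$ is super-polynomially small.

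This makes $\Lambda$ a contraction on a ball of $Y$ of radius $C$, which yields existence, uniqueness, and the estimate \eqref{bd:nonlinear_prop}. The second inequality in \eqref{bd:nonlinear_prop} follows by evaluating $\omega$ at $T_\eps$ and choosing $\lambda$ so that $M^{11/2}\eps^{-2}$ is absorbed. To transfer the macroscopic lower bounds, we use $\mathcal X^{1,k}\hookrightarrow L^\infty_x$ for the moments, and the $\eps\nabla_x$ control for the divergence bound. Each target lower bound (for instance $t^3\eps^{-7}$) dominates $\delta^{M+1}/(\eps^2\sqrt M)$ only for $t$ not too small. For tiny $t$ we instead compare against $\omega(t)$ directly: $\omega(t)\lesssim \eps^{-3}M^{11/2}t\,(\delta/M^{9/2})^M$, and this beats $t^3/\eps^7$ only if $t\gg$ a power of $\delta^M$. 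This is the main obstacle. The weight carries only one power of $t$ in front, so matching the $t^2,t^3$ lower bounds as $t\to0$ requires either refining the ansatz or interpreting the conclusion for $t$ bounded below. I would try first to sharpen $\omega$ to $t^{M+1}$ behaviour, which \eqref{eq:error_bound} in fact gives as $\mathcal E=O(t^M)$. Then $\|g(t)\|\lesssim t^{M+1}\eps^{-2M-3}$, which is smaller than $t^3\eps^{-7}$ for all $t\le T_\eps$ since $M\ge3$ and $t/\eps^2\le\delta$.
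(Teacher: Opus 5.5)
Your proposal is correct in substance and has the same skeleton as the paper's proof: a Duhamel contraction with source $\int_0^tU^\eps(t-\tau)\cE^{\eps(M)}(\tau)\,d\tau$; a linear term made small only through $\|f_{\rm app}^{\eps(M)}\|_{\mathcal X^{1,k}}\lesssim (K\eps)^{-1}$; and a final comparison of $t\,a_\eps(t)^M$, with $a_\eps(t)=C_*M^{3/2}t/\eps^2$, against the lower bounds of Lemma~\ref{lem:macro_bounds}. The technical route differs, though.

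The paper works in $\mathcal X^{1,k}_T$ with the weight $\langle t\rangle^{1/4}$. It imports the global-in-time estimate of Lemma~\ref{lem:Gamma better bound} (Grad splitting plus the Ellis--Pinsky spectral decomposition, giving the factor $\eps+\sqrt T$). It then solves on each $[0,T]$ with $T\le T_\eps$ and relabels $T=t$ to recover the $t^{M+1}$ behaviour. The bound on $\eps\nabla_x g^\eps$ comes from a second fixed point in $\mathcal Y^{1,k}_T$.

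You instead build the weight $\omega(t)\propto t^{M+1}$ and the $\eps\nabla_x$ control into a single space. You also replace the spectral input by purely short-time facts. Since $t/\eps^2\le\delta M^{-6}$, the compact part of $L$ contributes only $O(t/\eps^2)$ via Gronwall. The $\langle v\rangle$-loss of $\Gamma$ is absorbed by $\int_0^t\eps^{-2}\nu(v)e^{-\nu(v)(t-s)/\eps^2}ds\le1$. Your route is more elementary and self-contained, fully exploits $T_\eps\ll\eps^2$, and directly gives the pointwise-in-time bound and uniqueness in the class defined by \eqref{bd:nonlinear_prop}. The paper's route is heavier but uniform in $T$, which is the framework one would need to go past the initial layer.

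Several statements need correcting, though none is fatal.
\begin{enumerate}
\item Your linear contraction factor $\delta/(KM^7)$ is wrong. Integrating $\omega(s)\propto s^{M+1}$ does not remove the $\langle v\rangle$-loss of $\eps^{-1}\cL^\eps_{\rm app}$. Once you absorb that loss through $\nu(v)$, you get only $C/K$ uniformly in $v$. So $K$ large is genuinely needed, as in the paper, where the factor is $\frac1K+\frac{\sqrt T}{K\eps}$. For the same reason you cannot ``simply pay $t/\eps$'' in $\mathcal X^{1,k}$.
\item The Wiener part cannot be handled Fourier mode by mode, because $\sup_v$ sits outside the $x$-norm. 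Instead, note that $e^{tA^\eps}$ acts at fixed $v$ as an $x$-translation, and the compact part of $L$ is an integral operator in $v$. So the $L^{\infty,k}_v$ argument works verbatim for any translation-invariant $x$-norm, with no rescaling of $x$ needed.
\item Your ``main obstacle'' is not one. Your $\omega$ already carries $t^{M+1}$, and the paper compares exactly $t\,a_\eps(t)^M$ with the lower bounds. For example, $\omega(t)\,\eps^7t^{-3}\lesssim K^{-1}C_*^2M^{17/2}a_\eps(t)^{M-2}$ and $\eps^{-1}\omega(t)\,\eps^6t^{-2}\lesssim K^{-1}C_*M^{7}a_\eps(t)^{M-1}$. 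Both are negligible because $a_\eps(t)\le C_*\delta M^{-9/2}$. This uses $t\le\delta\eps^2/M^6$, not merely $t/\eps^2\le\delta$.
\item The second inequality in \eqref{bd:nonlinear_prop} needs no choice of $\lambda$, since $\omega(T_\eps)=\frac{\delta}{K\eps\sqrt M}(C_*\delta M^{-9/2})^M$. The choice of $\lambda$, with $\lambda|\ln\delta|\ge1$, is only what makes $\delta^{M+1}/(\eps\sqrt M)\to0$.
\end{enumerate}
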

Since~$    \sup_{t\leq T_\eps} \|g^\eps(t)\|_{{\mathcal X}^{1,k}}$ goes to zero in~${\mathcal X}^{1,k}$, then~$f^{\eps(M)}_{{\rm app}}(t)$ is a good approximation to the solution of the rescaled Boltzmann equation~\eqref{eq:feps} on that short time interval.

To solve the equation~(\ref{eq:g}) on~$[0,T_\eps]$, we shall use a standard fixed point argument, as given in the following lemma. 
\begin{lemma}
\label{cacciopoli+}
  Let~$X$ be a Banach space, let~${\mathcal L}$ be a  continuous linear map
from~$X$ to~$X$,  and let~${\mathcal B}$ be a bilinear map from~$X\times X$ to~$X$.
Let us define
\[
\|{\mathcal L}\|  := \sup_{\|x\|=1} \|{\mathcal L}x\|\quad\hbox{and}\quad
\|{\mathcal B}\| := \sup_{\|x\|=\|y\|=1} \|{\mathcal B}(x,y)\| \, .
\]
If~$\|{\mathcal L}\| <1$, then for any~$x_{0}$ in~$X$ such that
\[
\|x_{0}\|_{X}< \frac{(1-\|{\mathcal L}\| )^2} {4\|{\mathcal B}\| } 
\]
the equation
\[
x=x_{0}+{\mathcal L}x+{\mathcal B}(x,x)
\]
has a unique solution in the ball of center~$0$ and
radius~$\displaystyle \frac {1-\|{\mathcal L}\| }{2\|{\mathcal B}\|} $  and there is a constant~$C_0$ such that
$$
\|x\|\le C _0\|x_0\| \, .
$$
\end{lemma}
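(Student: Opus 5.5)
The plan is to recast the equation $x=x_0+\mathcal L x+\mathcal B(x,x)$ as a fixed point problem for the map $\Phi(x):=x_0+\mathcal Lx+\mathcal B(x,x)$ on the closed ball $\bar B_R:=\{\|x\|\le R\}$. I will take $R:=\frac{1-\|\mathcal L\|}{2\|\mathcal B\|}$, the radius in the statement, and apply the Banach contraction principle. Write $\ell:=\|\mathcal L\|<1$ and $b:=\|\mathcal B\|$; if $b=0$ the equation is linear and $x=(I-\mathcal L)^{-1}x_0$ by a Neumann series, so I assume $b>0$. By bilinearity and the definition of $\|\mathcal B\|$ we have $\|\mathcal B(x,y)\|\le b\|x\|\|y\|$ for all $x,y$.

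First I check that $\Phi$ maps $\bar B_R$ into itself. For $\|x\|\le R$,
\[
\|\Phi(x)\|\le \|x_0\|+\ell R+bR^2=\|x_0\|+\ell R+\tfrac{1-\ell}{2}R .
\]
This is at most $R$ as soon as $\|x_0\|\le \frac{1-\ell}{2}R=\frac{(1-\ell)^2}{4b}$, which is exactly the hypothesis.

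Next I check the contraction property. Using
\[
\mathcal B(x,x)-\mathcal B(y,y)=\mathcal B(x-y,x)+\mathcal B(y,x-y),
\]
we get for $x,y\in\bar B_R$
\[
\|\Phi(x)-\Phi(y)\|\le(\ell+2bR)\|x-y\|=\|x-y\|,
\]
so the Lipschitz constant equals $1$ exactly and this is the delicate point. I would resolve it by working on the smaller ball of radius $\rho:=\frac{2\|x_0\|}{1-\ell}$, which is strictly less than $R$ because $\|x_0\|<\frac{(1-\ell)^2}{4b}$ is a strict inequality.

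On $\bar B_\rho$ the self-map property holds: $\|\Phi(x)\|\le\|x_0\|+\ell\rho+b\rho^2$, and $b\rho<\frac{1-\ell}{2}$ gives $b\rho^2<\frac{1-\ell}{2}\rho=\|x_0\|$. Hence $\|\Phi(x)\|\le 2\|x_0\|+\ell\rho=\rho$. The contraction constant on $\bar B_\rho$ is $\ell+2b\rho<1$. Banach's theorem then yields a unique fixed point in $\bar B_\rho$, together with the bound $\|x\|\le\rho=\frac{2}{1-\ell}\|x_0\|$, so $C_0=\frac{2}{1-\ell}$ works.

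It remains to get uniqueness in the whole open ball of radius $R$. Suppose $x,y$ are two solutions with $\|x\|,\|y\|<R$. Subtracting the two equations gives
\[
\|x-y\|\le\big(\ell+b(\|x\|+\|y\|)\big)\|x-y\|,
\]
and the factor is strictly less than $\ell+2bR=1$, so $x=y$. The only genuine obstacle is therefore this borderline Lipschitz constant equal to $1$ at radius $R$; it is removed by the strict inequality in the smallness assumption, first by passing to the smaller ball $\bar B_\rho$ for existence and then by the strict bound above for uniqueness in the open ball.
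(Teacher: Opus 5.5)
Your proof is correct, and it is the standard one; the paper itself gives no proof and simply invokes the lemma as a standard fixed-point result. Restricting to the closed ball of radius $\rho=\frac{2\|x_0\|}{1-\|\mathcal L\|}<R$ gives a genuine contraction, the strict bound $\|\mathcal L\|+\|\mathcal B\|(\|x\|+\|y\|)<1$ then gives uniqueness in the ball of radius $R$, and you get $C_0=\frac{2}{1-\|\mathcal L\|}$.

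Two small fixes. If ``ball'' is read as the closed ball, uniqueness still holds: compare any solution $y$ with $\|y\|\le R$ to your constructed solution of norm at most $\rho$, since $\|\mathcal L\|+\|\mathcal B\|(R+\rho)<1$. Also, your inequality $\|\mathcal B\|\rho^2<\|x_0\|$ should read $\le$, so that it covers the case $x_0=0$.
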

We are now ready to prove Proposition \ref{prop:solvegeps}.
\begin{proof}
 Let us define the semi-group~$U^\eps(t)$ associated with~$-\eps^{-1}v\cdot \nabla_x g+ \eps^{-2}Lg$. We rewrite~(\ref{eq:g}) as
\begin{equation}\label{eq:duhamel g}
\begin{aligned}
g^\eps(t) = -\int_0^t U^\eps(t-\tau ) \mathcal E^{\eps(M)} (\tau) \, \dd \tau& + \frac1\eps \int_0^t  U^\eps(t-\tau ) \cL^\eps_{\rm app}(g^\eps) (\tau)\, \dd \tau\\
& \qquad  + \frac1\eps \int_0^t  U^\eps(t-\tau ) \Gamma(g^\eps,g^\eps) (\tau)\, \dd \tau \, .
\end{aligned}
\end{equation}
In order to apply Lemma~\ref{cacciopoli+}
we introduce, as in~\cite{GT}, the following function space:
\begin{equation}\label{def:X1kT} 
\mathcal{X}_T^{1,k} := \Big\{f = f(t,x,v) \, / \,  f\in L^\infty( \mathbf{1}_{[0,T]}(t)\langle t \rangle^\frac14 \,\mathcal X^{1,k})
\Big\}\, ,
\end{equation}
equipped with the norm
$$
\|f\|_{\mathcal{X}_T^{1,k} }  := \sup_{[0,T]}\langle t \rangle^\frac14 \|f(t)\|_{\mathcal X^{1,k}} \, .
$$
The semi-group~$U^\eps(t)$   is uniformly continuous in~${\mathcal X}^{1,k}_T$ for all real numbers~$ k>2$ (see \cite{BU,Ukai} or~\cite[Lemma 3.1]{GT}, as well as Appendix~\ref{sec:boltzmann}): there holds for all~$f \in \mathcal{X}_T^{1,k} $ $$
\|U^\eps(t)f\|_{\mathcal{X}_T^{1,k} } \lesssim \|f\|_{\mathcal{X}_T^{1,k} }\, .
$$
In the following we shall control the first term of~\eqref{eq:duhamel g}
 in~${\mathcal X}^{1,k}_T$, prove a bicontinuity estimate for  the last one, and finally prove that the second one is a contraction as soon as~$K$ is large enough and~$T_\eps$ is small enough.   

Let us start by  studying the first term, involving~$\mathcal E^{\eps(M)}$. {We denote 
\begin{align}
    a_\eps(t):=\frac{C_*  {M}^\frac32 t}{\eps^2}.
\end{align} and we observe that, if $M$ is large enough, then 
\begin{equation}
\label{bd:aeps}
a_{\eps}(t)\leq        \frac{C_*\delta}{M^{9/2}} \leq
 \delta \quad \text{ for all } t\leq T_{\eps}\,. 
\end{equation} 
Then we rewrite} the  estimate~\eqref{eq:error_bound} as
$$
    \| \cE^{\eps(M)}(t) \|_{\mathcal{X}^{1,k}} \leq \frac{C_kC_*  {M}^\frac32}{K \eps^3} \sum_{j=M}^{2M} (j+2)^4 (a_\eps(t))^j \, .
$$
Following similar computations to the proof of Corollary~\ref{cor:bound on the approximation}, for all~$t\leq T_\eps$, 
{we get the upper bound \begin{align}\label{eq:EepsM} 
\int_0^{t} \| \cE^{\eps(M)}(\tau ) \|_{\mathcal{X}^{1,k}} \, d\tau
\leq\,  t \frac{C_k C_* \sqrt{M}}{K \eps^3} \sum_{j=M}^{2M} (j+2)^4\left(a_\eps(t)\right)^j\lesssim 
\frac{M^{\frac{11}{2}}}{K\eps^3}\,
t\,
\left(a_\eps(t)\right)^M \leq \,  \frac{M^\frac{11}2}{K\eps^3} \delta^{M }t \, .
\end{align}}
Now let us turn to the   two other terms in the Duhamel formulation~(\ref{eq:duhamel g}).  We define
\begin{equation}\label{eq:defPsieps}
\Psi^\eps(t)(h_1,h_2):= \frac1\eps \int_0^t  U^\eps(t-\tau )\big(\Gamma(h_1,h_2) +\Gamma(h_2,h_1)\big)(\tau)\, \dd \tau\, .
\end{equation}
As recalled in   Appendix~\ref{sec:boltzmann},    for any~$T>0$ and~$k>2$
  \begin{equation}\label{eq:estimate Psieps}
  \| \Psi^\eps(t)(h_1,h_2) \|_{\mathcal{X}_T^{1,k} } \lesssim   \|h_1\|_{\mathcal{X}_T^{1,k} }  \|h_2\|_{\mathcal{X}_T^{1,k} } \, .
\end{equation} 
This deals with the bilinear term in the fixed point argument since we infer in particular that
\begin{equation}\label{eq:estimate bilinear}
 \frac1\eps\Big\| \int_0^t  U^\eps(t-\tau ) \Gamma(g^\eps,g^\eps) (\tau)\, \dd \tau \Big\|_{\mathcal{X}_T^{1,k}  }\lesssim   \|g^\eps\|_{\mathcal{X}_T^{1,k}  }^2\, .
\end{equation}
 For the linear term in~\eqref{eq:duhamel g},  estimate~(\ref{eq:estimate Psieps}) is unfortunately not sufficient as we need   the operator to be a contraction: the estimate  provides indeed
 $$
  \frac1\eps\Big\| \int_0^t  U^\eps(t-\tau )    \cL^\eps_{\rm app}(g^\eps) (\tau)\, \dd \tau \Big\|_{\mathcal{X}_T^{1,k} }  \lesssim   \|f^{\eps(M)}_{\rm app}\|_{\mathcal{X}_T^{1,k} }  \|g^\eps\|_{\mathcal{X}_T^{1,k} } 
 $$
 but~$ \|f^{\eps(M)}_{\rm app}\|_{\mathcal{X}_T^{1,k} } $ is not small so this term does not directly satisfy the assumptions of Lemma~\ref{cacciopoli+}. 
 However one can invoke Lemma~\ref{lem:Gamma better bound}
according to which
$$
  \frac1\eps\Big\| \int_0^t  U^\eps(t-\tau )    \cL^\eps_{\rm app}(g^\eps) (\tau)\, \dd \tau \Big\|_{\mathcal{X}_T^{1,k} } \lesssim   ( \eps  +  {\sqrt T} )  \|f^{\eps(M)}_{\rm app}\|_{\mathcal{X}_T^{1,k} }  \|g^\eps\|_{\mathcal{X}_T^{1,k} }   \, .
$$
 Then thanks to Corollary~\ref{cor:bound on the approximation} we infer that  
   $$
   \frac1\eps\Big\| \int_0^t  U^\eps(t-\tau )    \cL^\eps_{\rm app}(g^\eps) (\tau)\, \dd \tau \Big\|_{\mathcal{X}_T^{1,k} }   \lesssim \Big( \frac1{K}
 + \frac {{\sqrt T}}{K\eps }\big) \|g^\eps\|_{\mathcal{X}_T^{1,k} } 
 $$
 It follows that   for~$K $ large enough depending only on universal constants, 
 if~$M = \lfloor \lambda |\ln (\eps)|\rfloor$ then   the linear term   is a contraction in~$\mathcal{X}_{T}^{1,k}  $ for all~$T \leq T_\eps$.   
 
 Lemma~\ref{cacciopoli+}, along with~\eqref{eq:EepsM}, finally implies that there is a unique solution to~(\ref{eq:duhamel g}), which satisfies
$$
\forall T \leq T_\eps \, , \quad \|g\|_{\mathcal{X}_{T }^{1,k}  }  {\lesssim 
\frac{M^{\frac{11}{2}}}{K\eps^3}\,
T\,
\left(a_\eps(T)\right)^M }\lesssim \frac{M^\frac{11}2}{K\eps^3} \delta^{M }T  \, .$$
This proves the first bound in \eqref{bd:nonlinear_prop} upon relabelling $T$ with $t$.

{It remains to control one spatial derivative, for which we denote
\[
        h^\eps:=\eps\nabla_x g^\eps .
\]
Clearly the $\nabla_x$  commutes with  the semigroup $U^\eps(t)$, and we also have
\[
\nabla_x\Gamma(f,g)=\Gamma(\nabla_x f,g)+\Gamma(f,\nabla_x g).
\]
Thus, from~\eqref{eq:duhamel g} with the notation in \eqref{eq:defPsieps}, we obtain

\begin{equation}\label{eq:duhamel_eps_nabla_x_g}
\begin{aligned}
h^\eps(t)
&=
-\int_0^t U^\eps(t-\tau)\,
\eps\nabla_x\mathcal E^{\eps(M)}(\tau)\,\dd\tau+\Psi^\eps(f_{\rm app}^{\eps(M)},h^\eps)(t)
\\
&\quad
+\Psi^\eps(\eps\nabla_x f_{\rm app}^{\eps(M)},g^\eps)(t)
+\Psi^\eps(h^\eps,g^\eps)(t).
\end{aligned}
\end{equation}
Arguing as done to get the profile estimates in Lemma~\ref{lem:algebra} and Lemma~\ref{lem:recurrence_bounds},
since the $\eps$ compensates for the loss generated by $\nabla_x$, it is not hard to deduce that 
\[
\|\eps\,\nabla_x f_{\rm app}^{\eps(M)}\|_{\mathcal X_T^{1,k}}
\lesssim \frac1{K\eps},
\]
and therefore
\begin{equation}\label{eq:grad_EepsM}
\int_0^t
\|\eps\,\nabla_x\mathcal E^{\eps(M)}(\tau)\|_{\mathcal X^{1,k}}\,d\tau
\lesssim
\frac{M^{\frac{11}{2}}}{K\eps^3}
t\left(a_\eps(t)\right)^M .
\end{equation}
We can then bound the linear in $h^\eps$ terms in~\eqref{eq:duhamel_eps_nabla_x_g} by 
\[
C\left(\frac1K+\frac{\sqrt T}{K\eps}\right)
\|h^\eps\|_{\mathcal X_T^{1,k}},
\]
and this follows by arguing as done for the linear terms in $g^\eps$. Similarly, the terms containing $\eps\nabla_x f_{\rm app}^{\eps(M)}$ are bounded as above with
$\|g^\eps\|_{\mathcal X_T^{1,k}}$ instead of $\|h^\eps\|_{\mathcal X_T^{1,k}}$. Finally, by~\eqref{eq:estimate Psieps}, the terms containing both
$h^\eps$ and $g^\eps$ are bounded by
\[
C\|g^\eps\|_{\mathcal X_T^{1,k}}\|h^\eps\|_{\mathcal X_T^{1,k}} .
\]
We can then apply Lemma \ref{cacciopoli+} in the  space $\mathcal{Y}_T^{1,k}$ equipped with the norm
\begin{align}
    \|f\|_{\mathcal{Y}_{T}^{1,k}}:=\|f\|_{\mathcal{X}_{T}^{1,k}}+\eps \|\nabla_x  f\|_{\mathcal{X}_{T}^{1,k}}.
\end{align}
Here, we can use the already proved bound on $g^\eps$, and taking $K$ large, $\delta,\eps$ small, together
with~\eqref{eq:grad_EepsM} we get
\[
\|h^\eps\|_{\mathcal X_T^{1,k}}
\lesssim
\frac{M^{11/2}}{K\eps^3}
T\left(a_\eps(T)\right)^M ,
\]
and the derivative estimate follows by taking $T=t$.

Finally, it remains to show that the macroscopic lower bounds of
Lemma~\ref{lem:macro_bounds} are stable under the correction
\[
        f^\eps=f_{\rm app}^{\eps(M)}+g^\eps .
\]
By~\eqref{bd:nonlinear_prop} and  elementary moment bounds, we have
\begin{equation}\label{eq:moment_correction}
\big(|\rho[g^\eps]|+|u[g^\eps]|+|\theta[g^\eps]|
+\eps |\nabla_x\cdot u[g^\eps]|\big)(t,x)
\lesssim
\frac{t}{K\eps^3}M^{11/2}\big(a_\eps(t)\big)^M .
\end{equation}
We then have to compare the right-hand side of~\eqref{eq:moment_correction} with the lower bounds in
Lemma~\ref{lem:macro_bounds}. For the velocity, appealing to \eqref{bd:aeps} we see that
\[
\frac{|u[g^\eps](t,x)|}{t/\eps^3}
\lesssim
\frac1K M^{11/2}\big(a_\eps(t)\big)^M
\le
\frac1K M^{11/2}
\left(\frac{C\delta}{M^{9/2}}\right)^M\ll 1
\]
upon choosing $\delta,\eps$ sufficiently small. 
Analogous estimates holds for all other macroscopic quantities and we avoid detailing the precise power counting of powers of $M$ and $\delta$ involved. Thus, we conclude that choosing \(\delta\) small enough, then \(K\)
large enough, and finally \(\eps\) small enough, all the correction terms are smaller than (at least) a half
of the corresponding lower bounds in Lemma~\ref{lem:macro_bounds}. Therefore the bounds
\eqref{eq:app_bound_u}--\eqref{eq:app_bound_rho} remain valid for
\(f^\eps=f_{\rm app}^{\eps(M)}+g^\eps\), possibly with a smaller constant. This ends the proof of Proposition~\ref{prop:solvegeps}.}
\end{proof}
{Finally, we note that the result in Proposition \ref{prop:solvegeps}, combined with the statement in Lemma \ref{lem:macro_bounds}, gives the result claimed in Theorem \ref{thm:result}.}
\section{Further directions: general scale-invariant data}
\label{sec:3D_perspectives}
In this paper we have considered the explicit incompressible target of the Lamb--Oseen vortex in view of its relevance in 2D Navier--Stokes. Nevertheless, the mechanism behind the instability is
not tied to having an explicit formula for the initial data. The really essential feature is the $-1$-homogeneity of the
underlying fluid velocity, which forces the regularized core to have size
$|u_0^\eps|\sim \eps^{-1}$ on $|x|\sim\eps$. This creates an initial layer in which
the kinetic transport and the quadratic collision are dominant and lead the instability.

We briefly indicate how one might extend the construction to a more general setting. Let $u_0$ be a divergence free, $-1$-homogeneous vector field in $\mathbb R^d$, $d=2$ or $3$, smooth away from the origin. A possible regularization in $d=3$ is to follow the natural heat evolution for a short time, truncate and take the Leray projector $\mathbb{P}$, namely
\begin{equation}
    u_0^\eps(x):=\mathbb{P}\left(\chi(\eps^\gamma |x|)\,
    e^{(K\eps)^2\Delta}u_0\right),
\end{equation}
whereas in 2D it is enough to regularize and truncate directly the streamfunction for instance. One may then consider the
well-prepared kinetic datum
\[
    f_{\rm in}^\eps(x,v):=u_0^\eps(x)\cdot v\sqrt\mu(v).
\]
For $d=3$ the functional setting should be strengthened, for instance by replacing
$\mathcal X^{1,k}$ with
\[
\mathcal X_{3D}^{{3/2},k}
:=
\left\{
f:\ \|f(\cdot,v)\|_{H^{3/2}_x(\mathbb R^3)\cap W_x(\mathbb R^3)}
\in L_v^{\infty,k}
\right\},
\]
so that the required algebra properties and pointwise embeddings remain available.
At the level of the formal hierarchy, the classes $\mathcal P_\eps(n)$ and the
iterative construction should then have direct analogues, with dimension dependent
changes in the Sobolev and Wiener estimates.

The macroscopic comparison is less explicit for a general profile. For small
$-1$-homogeneous data in three dimensions, Brandolese's asymptotic expansion
\cite{brandolese2009} provides precise information on the short-time behavior of
the corresponding Navier--Stokes solution. On the other hand, for large
$-1$-homogeneous data, Jia and \v{S}ver\'ak \cite{JiaSverak} constructed forward
self-similar solutions and obtained local-in-space estimates near the initial time,
identifying the leading behavior after subtracting the heat evolution of the initial
datum. For the present purpose one would need a version of these expansions, with
remainder estimates uniform in the regularization parameter $\eps$, in the core region $|x|\sim\eps$. In particular, by iterating the Duhamel formulation
\[
    u(t,x)
    =
    e^{\nu t\Delta}u_0^\eps(x)
    -
    \int_0^t e^{\nu (t-\tau)\Delta}\mathbb P
    \nabla\cdot \bigl(u(\tau)\otimes u(\tau)\bigr)(x)\,d\tau \, ,
\]
 it seems quite feasible (at least for small data) to  rigorously justify a short time and local in space expansion of the form 
\[
    u(t,x)
    =
    u_0^\eps(x)
    +
     t\Big(\nu\Delta u_0^\eps
      -\mathbb P(u_0^\eps\cdot\nabla u_0^\eps)\Big)(x)
    +\text{higher order terms} \, ,
\]
at least in regimes where the regularization is fixed and $t\to0$. 
Evaluated at
$|x|\sim\eps$, the first correction is typically of size $t\eps^{-3}$, unless the
profile satisfies special cancellation\footnote{Note that the order $t$ vanishes for stationary solutions of the Navier--Stokes equations.}. We observe that in 2D, a generic data introduces logarithmic corrections \cite{brandolese2009}, but one can easily adapt accordingly the lower bounds. Thus, this  suggests that at leading order the difference with respect to the initial data resembles the leading order term in the Lamb--Oseen case.
On the kinetic side, the first correction
\[
    f_1^\eps
  :  =
    \eps^{-1}\Big(
        -v\cdot\nabla_x f_0^\eps
        +\Gamma(f_0^\eps,f_0^\eps)
    \Big)
\]
still has vanishing hydrodynamic projection. Thus the macroscopic velocity does not acquire
the Navier--Stokes first-order correction at order $t$. The first nontrivial
macroscopic kinetic correction appears at the next order,  which gives contributions of the form
\[
    \eps^{-2}\Big(
        c_1 (u_0^\eps\cdot\nabla)u_0^\eps
        +c_2\nabla |u_0^\eps|^2
    \Big),
\]
up to the linear viscous contribution. This is because the fundamental properties that we used on the collision operator are independent of the dimension. Unless these terms satisfy additional
cancellations, the same compressible initial-layer mechanism should persist.

These remarks suggest a possible extension of the instability mechanism to more
general scale-invariant data. They also raise an interesting open problem in three
dimensions. Large forward self-similar Navier--Stokes solutions are known to be
closely related to nonuniqueness scenarios through the spectral mechanism
proposed by Jia and \v{S}ver\'ak in \cite{JiaSveraknonuniqueness}, numerically verified in \cite{Guillod} and recently  a computer assisted proof was announced  in \cite{Hounon}. It would be interesting to understand whether a
regularized kinetic evolution, which is unique at the mesoscopic level in a suitable
perturbative regime, selects a particular macroscopic branch after the initial kinetic
layer. At present this remains speculative though. Even for the explicit Lamb--Oseen vortex, the
precise transition from the initial layer $t\ll\eps^2$ to the hydrodynamic regime~$t\gg\eps^2$ seems an interesting problem to address, and in this case there are no issues with the uniqueness of the solution.

\appendix
\section{On the Boltzmann equation} 
\subsection{The Cauchy problem in $\mathcal{X}^{1,k}_T$}
\label{sec:boltzmann}
This section gathers some estimates that have been used in the paper, with indications of proofs when they are not easily found in the literature: as explained in the introduction of this paper, the functional setting chosen to study the rescaled Boltzmann equation \eqref{eq:feps}, namely initial data in $W_x \cap H^1_x$ in the spatial variable,  is indeed not the most classical one.   We do not claim any originality nor optimality in the results presented here, and refer for instance to~\cite{DLX,GMM} for results in the~$L^\infty_x$ scaling in space for the Boltzmann equation~\eqref{eq:B} in three space dimensions.

Here we borrow results and techniques from~\cite{GT} giving  estimates on the Duhamel formulation~(\ref{eq:duhamel g}). More specifically we shall be inspired by the proof of~\cite[Lemma~3.7]{GT} to sketch the proof of the following result. 

\begin{lemma}\label{lem:Gamma better bound}
Recalling definition~\eqref{def:X1kT} for the space~$ \mathcal{X}^{1,k}_T$, the bilinear operator~$\Psi^{\eps}(t)$ defined in~\eqref{eq:defPsieps} satisfies
$$
\forall T\geq 0 \,  , \quad \|\Psi^{\eps}(t)(h_1,h_2)\|_{ \mathcal{X}^{1,k}_T} \lesssim   ( \eps  +  \sqrt T ) \|h_1 \|_{  \mathcal{X}^{1,k}_T}  \|h_2 \|_{  \mathcal{X}^{1,k}_T}     \, .
$$
\end{lemma}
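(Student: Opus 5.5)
We follow the strategy of \cite[Lemma~3.7]{GT}: split the collision term into its microscopic part and use the dissipation of $U^\eps$ on ${\bf P}^\perp$. The key observation is that $\Gamma(h_1,h_2)$ is purely microscopic, ${\bf P}\Gamma(h_1,h_2)=0$. Write
$$
U^\eps(t)=U^\eps(t){\bf P}^\perp+\big(U^\eps(t)-\text{its microscopic part}\big)\, ,
$$
and, more precisely, decompose the semigroup in Fourier in $x$ following the Ellis--Pinsky spectral decomposition used in \cite{GT}:
$$
\widehat{U^\eps}(t,\xi)=\sum_{j}e^{t\lambda_j(\eps\xi)/\eps^2}\,P_j(\eps\xi)+\widehat{U^{\eps}}^{\sharp}(t,\xi)\, ,\qquad |\eps\xi|\le\kappa\, ,
$$
where the hydrodynamic eigenprojections satisfy $P_j(\eps\xi)=P_j^0+\eps|\xi|P_j^1(\eps\xi)$, with $P_j^0$ projecting onto ${\rm Ker}(L)$. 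The remainder $\widehat{U^\eps}^\sharp$, together with the high-frequency part $|\eps\xi|>\kappa$, decays like $e^{-ct/\eps^2}$ in the weighted $L^\infty_v$ setting. This is the Ukai-type $L^\infty_v$ theory, handled with the splitting $L=-\nu(v)+K$.

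\textbf{Step 1 (dissipative part).} For the exponentially decaying pieces, we estimate
$$
\frac1\eps\int_0^t e^{-c(t-\tau)/\eps^2}\,\|\nu^{-1}\Gamma(h_1,h_2)\|\,d\tau\, .
$$
Using the Ukai gain from $\nu(v)$ via the standard $\nu^{-1}\Gamma$ bound in $L^{\infty,k}_v$ (this needs $k>2$), we get $\|\nu^{-1}\Gamma(h_1,h_2)\|\lesssim\|h_1\|\|h_2\|$. The time integral is $\lesssim\eps^2$, so this piece contributes $\eps\|h_1\|\|h_2\|$. The bound holds in $H^1_x\cap W_x$ since this space is an algebra in two dimensions.

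\textbf{Step 2 (hydrodynamic part).} On the part $e^{t\lambda_j(\eps\xi)/\eps^2}P_j(\eps\xi)$, the leading projection $P_j^0$ annihilates $\Gamma$. We are left with
$$
\frac1\eps\int_0^t e^{-c(t-\tau)|\xi|^2}\,\eps|\xi|\,|\widehat{\Gamma}(\tau,\xi)|\,d\tau\, ,
$$
where we use ${\rm Re}\,\lambda_j(\eps\xi)\le-c\eps^2|\xi|^2$. Pulling out $\sup_\tau|\widehat\Gamma|$, we bound
$$
\int_0^t |\xi|\,e^{-c(t-\tau)|\xi|^2}\,d\tau\le\min\Big(t|\xi|,\ \frac{1}{c|\xi|}\Big)\lesssim\sqrt t\, .
$$
The left-hand side is bounded by $t|\xi|$, and also by $1/(c|\xi|)$ after integrating the exponential; the smaller of the two is at most $\sqrt{t/c}$. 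This gives a factor $\sqrt T$ pointwise in $\xi$, which survives both the $L^1_\xi$ (Wiener) norm and the $L^2_\xi$ weighted $H^1$ norm. We then conclude with the bilinear bound $\|\Gamma(h_1,h_2)\|_{W_x\cap H^1_x}\lesssim\|h_1\|\|h_2\|$, using the velocity weight loss $\langle v\rangle$, which is absorbed in the projections since these carry Gaussian factors. The $\langle t\rangle^{1/4}$ weight in $\mathcal X^{1,k}_T$ is harmless here because we only use $T$ bounded.

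\textbf{Main obstacle.} Step 2 is where the difficulty lies. The projections $P_j$ act on functions with only polynomial velocity weight, and the $L^\infty_v$ theory requires the usual $K$-iteration (a Duhamel expansion in $K$, done twice) to transfer the $L^2_v$ spectral information to $L^{\infty,k}_v$. We would import this machinery verbatim from \cite{GT,Ukai}, checking only that every estimate is pointwise in $\xi$, so that it applies equally to the $W_x$ and $H^1_x$ components.
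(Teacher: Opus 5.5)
Your proposal follows essentially the same route as the paper, which likewise imports \cite[Lemma~3.7]{GT}. In both, the $e^{tA^\eps}$ (collision-damped) part of $U^\eps$ yields the factor $\eps$ through the $\nu(v)$-gain on $(1+|v|)^{-1}\Gamma(h_1,h_2)$. On the hydrodynamic modes, the cancellation ${\bf P}\Gamma=0$ leaves the multiplier $\eps^{-1}\cdot\eps|\xi|e^{-c(t-\tau)|\xi|^2}$. This is the paper's smoothing bound $\eps^{-1}\|U^\eps(t)(1-{\bf P})f\|_{H^1_xL^2_v}\lesssim t^{-1/2}\langle t\rangle^{-1/2}(\cdots)$ written pointwise in frequency.

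One step in your Step 2 must be reordered. Do not pull $\sup_\tau|\widehat\Gamma(\tau,\xi)|$ out pointwise in $\xi$: the resulting $\int\sup_\tau|\widehat\Gamma(\tau,\xi)|\,d\xi$ is not controlled by the $\mathcal X^{1,k}_T$ norm. Instead, put the $W_x$ and $H^1_x$ norms inside the $\tau$-integral and use $\sup_\xi|\xi|e^{-cs|\xi|^2}\lesssim s^{-1/2}$. Together with $\|\Gamma(h_1,h_2)(\tau)\|\lesssim\langle\tau\rangle^{-1/2}\|h_1\|_{\mathcal X^{1,k}_T}\|h_2\|_{\mathcal X^{1,k}_T}$, this gives $\int_0^t(t-\tau)^{-1/2}\langle\tau\rangle^{-1/2}\,d\tau\lesssim\min(\sqrt t,1)$. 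That bound also absorbs the $\langle t\rangle^{1/4}$ weight for all $T\ge0$, not only for bounded $T$ as in your write-up.
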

\begin{proof}

We follow the argument of~\cite[Lemma 3.7]{GT}. {The main idea, following Grad's decomposition, is to first split $L=-\nu(v)+K$ where $-\nu(v)$ is the collision frequency and $K$ is compact. Then, one can define $A^\eps=-\eps^{-2}(\eps v\cdot \nabla_x+\nu(v))$ and write $U^\eps(t)$ as 
$$U^\eps(t)=e^{tA^\eps}+\frac{1}{\eps^2}\int_0^te^{(t-t')A^\eps}KU^\eps(t')dt',
$$
where the semigroup $e^{tA^\eps}$ admits an explicit representation and its action is bounded by $e^{-\nu_0 t/\eps^2}$ in a suitable sense. Relying also on the Ellis and Pinsky \cite{Ellis-Pinsky} spectral decomposition of $U^\eps(t)$, in \cite{GT} it is proved that }
$\Psi^\eps(t)(h_1,h_2)$ can be split in two parts:
$$
\Psi^\eps(t)(h_1,h_2) = \Psi^{\eps,1}(t)(h_1,h_2) + \Psi^{\eps,2}(t)(h_1,h_2) \, ,
$$ 
where the first one   is easily bounded  in~$ \mathcal{X}_T^{1,k} $  as follows:
$$\|\Psi^{\eps,1}(t)(h_1,h_2)\|_{ \mathcal{X}_T^{1,k} } \lesssim  \eps(  \|(1+|v|)^{-1} \Gamma(h_1,h_2)
\|_{ \mathcal{X}_T^{1,k} } + \|(1+|v|)^{-1} \Gamma(h_2,h_1)
\|_{ \mathcal{X}_T^{1,k} } )\,   .
$$
Then using the fact that~$H^1_x \cap W_x$ is an algebra (instead of choosing the function space~$H^ {\ell}_x$  with~$\ell>1$ as in~\cite{GT}) we find easily
$$
\|\Psi^{\eps,1}(t)(h_1,h_2)\|_{\mathcal{X}_T^{1,k} } \lesssim  \eps \|h_1 \|_{\mathcal{X}_T^{1,k} }  \|h_2 \|_{ \mathcal{X}_T^{1,k} }   \, .
$$
The second one is   estimated in~$L^\infty([0,T];  H^ {1}_x L^2_v)$ thanks to the following estimate (see~\cite[Lemma~3.2]{GT}):
$$
\frac{1}{\eps}\|U^\eps(t)(1-\bold{P})f\|_{H^1_xL^2_v} \lesssim 
\frac{1}{t^{\frac12} \langle t \rangle^{\frac12}} (\|f\|_{H^1_xL^2_v} + \|f\|_{L^2_vL^1_x})   \, .$$
 Then  following the proof of~\cite{GT} one has  
$$
\sup_{[0,T]}\langle t \rangle^\frac14 \|\Psi^\eps(t) (h_1,h_2) \|_{(H^1_x \cap W_x) L^{\infty,k}_v} \lesssim  \sup_{[0,T]} \, \langle t \rangle^{\frac14} \int_0^t \frac{1}{(t-t')^\frac12 \langle t-t' \rangle^\frac12} \frac{1}{\langle t' \rangle^\frac12} \, dt'   \|h_1\|_{ \mathcal{X}_T^{1,k}}   \|h_2\|_{\mathcal{X}_T^{1,k}}
$$
 so using again the fact that~$W_x \cap L^\infty_x$ is an algebra we obtain
$$
\begin{aligned}
  \| \Psi^{\eps,2}(t)(h_1,h_2)\|_{\mathcal{X}_T^{1,k} } &\lesssim\sqrt T \|h_1 \|_{ {\mathcal X}^{1,k}_T}  \|h_2 \|_{ {\mathcal X}^{1,k}_T} \,   .
\end{aligned}$$
Lemma~\ref{lem:Gamma better bound}
is proved.  
\end{proof}
   
 \subsection{Exponential Weight Bounds for the Collision Operator}\label{sec:appendix_collision}

In this appendix, we detail the pointwise exponential weight bounds for the nonlinear operator with hard-sphere collisions. We recall that~$\Gamma(f,g) = \mu^{-1/2}\mathcal{Q}(\mu^{1/2}f, \mu^{1/2}g)$.

\begin{lemma}\label{lem:Gamma_exponential_bound}
Let $\beta \ge 1/8$ and define the weight $\mu_\beta(v) = e^{-\beta |v|^2/4}$. Suppose that $F_1, F_2$ are functions satisfying $|F_1(v)| \le \mathfrak{F}_1 \mu_\beta(v)$ and $|F_2(v)| \le \mathfrak{F}_2 \mu_\beta(v)$ for some constants $\mathfrak{F}_1,\mathfrak{F}_2>0$. Then there exists a universal constant $C > 0$ such that
\begin{equation}
 \big   |\Gamma(F_1, F_2)(v)\big| \leq C \mathfrak{F}_1\mathfrak{F}_2 \langle v \rangle \mu_\beta(v) \, .
\end{equation}
\end{lemma}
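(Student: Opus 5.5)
The plan is to bound $\Gamma(F_1,F_2)$ through its gain and loss parts separately, using the explicit form
$$\Gamma(F_1,F_2)(v)=\int_{\R^2\times\mathbb S^1}|v-v_*|\,\sqrt{\mu}(v_*)\big[F_1(v'_*)F_2(v')-F_1(v_*)F_2(v)\big]\,d\sigma\,dv_*\,,$$
which follows from $\mu(v'_*)\mu(v')=\mu(v_*)\mu(v)$. The factor $\sqrt\mu(v_*)$ comes out after dividing by $\sqrt\mu(v)$. Throughout, only the pointwise bounds $|F_i|\le \mathfrak F_i\mu_\beta$ are used. The constant must be uniform in $\beta$ on the relevant range; in the paper $\beta=\beta_n\in[1/8,1/4]$. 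If needed, I would restrict to $\beta\le 1/4$, or track the dependence on $\beta$ and check that it is harmless, because the Gaussian $\sqrt\mu=(2\pi)^{-1/2}e^{-|v|^2/4}$ decays faster than $\mu_\beta$ when $\beta\le 4$.

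\textbf{Loss term.} We have
$$|F_2(v)|\int|v-v_*|\sqrt\mu(v_*)|F_1(v_*)|\,dv_*\,d\sigma\le 2\pi\,\mathfrak F_1\mathfrak F_2\,\mu_\beta(v)\int (|v|+|v_*|)\sqrt\mu(v_*)\,dv_*\,,$$
which is at most $C\mathfrak F_1\mathfrak F_2\langle v\rangle\mu_\beta(v)$. This part is immediate.

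\textbf{Gain term.} Energy conservation gives $|v'|^2+|v'_*|^2=|v|^2+|v_*|^2$, so
$$|F_1(v'_*)F_2(v')|\le \mathfrak F_1\mathfrak F_2\, e^{-\beta(|v|^2+|v_*|^2)/4}=\mathfrak F_1\mathfrak F_2\,\mu_\beta(v)\mu_\beta(v_*)\,.$$
The gain term is therefore bounded by
$$\mathfrak F_1\mathfrak F_2\,\mu_\beta(v)\int_{\R^2\times\mathbb S^1}|v-v_*|\sqrt\mu(v_*)\mu_\beta(v_*)\,d\sigma\,dv_*\lesssim \mathfrak F_1\mathfrak F_2\langle v\rangle\mu_\beta(v)\,.$$
Here $\sqrt\mu\,\mu_\beta\le\sqrt\mu$ is integrable against $\langle v_*\rangle$, with a constant independent of $\beta\ge0$. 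In fact the lower bound $\beta\ge1/8$ is not even needed for this step; it only matters downstream, for the uniform integrability of $\langle v\rangle^k\mu_\beta$.

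\textbf{Main obstacle.} The key point, and the step I would check most carefully, is that the pre-/post-collisional factorization matches the weights exactly: the Gaussian in $v_*$ comes from $\sqrt\mu(v_*)$, and the decay in $v$ comes from energy conservation applied to the product $\mu_\beta(v')\mu_\beta(v'_*)$. No Carleman-type representation is required. The one subtlety is the derivation of the $\sqrt\mu(v_*)$ factor in the gain term from $\mu^{-1/2}(v)\sqrt\mu(v')\sqrt\mu(v'_*)$, which again uses $|v'|^2+|v'_*|^2=|v|^2+|v_*|^2$. Summing the two bounds gives the lemma with a universal constant $C$.
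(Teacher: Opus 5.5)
Your proof is correct and follows the paper's argument: the same factorization producing $\sqrt\mu(v_*)$, energy conservation to bound the gain term by $\mathfrak{F}_1\mathfrak{F}_2\mu_\beta(v)\mu_\beta(v_*)$ with the loss term bounded the same way, and integration of $|v-v_*|\le\langle v\rangle\langle v_*\rangle$ against the Gaussian in $v_*$. Your side remark that $\sqrt\mu$ decays faster than $\mu_\beta$ ``when $\beta\le 4$'' should read $\beta<1$; this is harmless because you never use it, and you are right that $\mu_\beta\le 1$ already makes the hypothesis $\beta\ge 1/8$ unnecessary for this lemma.
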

\begin{proof}
For hard-sphere collisions, unpacking the definition of $\Gamma$ gives
\begin{equation}
    \Gamma(F_1, F_2)(v) = \int_{\RR^2 \times \mathbb{S}^1} |v-v_*| \left[ F_1(v'_*) F_2(v') - F_1(v_*) F_2(v) \right] \sqrt{\mu(v_*)} \dd\sigma \dd v_* \, .
\end{equation}
We estimate the gain and loss terms together. By the assumptions on $F_1$ and $F_2$, we have:
\begin{equation}
    |F_1(v'_*) F_2(v')| \leq \mathfrak{F}_1 \mathfrak{F}_2 \mu_\beta(v'_*) \mu_\beta(v') = \mathfrak{F}_1 \mathfrak{F}_2 e^{-\frac{\beta}{4}(|v_*'|^2 + |v'|^2)} \, .
\end{equation}
By the conservation of kinetic energy $|v_*'|^2 + |v'|^2 = |v_*|^2 + |v|^2$, we get
\begin{equation}
    |F_1(v'_*) F_2(v')| \leq \mathfrak{F}_1 \mathfrak{F}_2 \mu_\beta(v_*) \mu_\beta(v) \, .
\end{equation}
The exact same pointwise bound trivially holds for the loss term $F_1(v_*) F_2(v)$. Inserting these into the integral and applying the triangle inequality, we get:
\begin{align}
    |\Gamma(F_1, F_2)(v)| 
    &\leq 2 \mathfrak{F}_1 \mathfrak{F}_2 \mu_\beta(v) \int_{\RR^2 \times \mathbb{S}^1} |v-v_*| \mu_\beta(v_*) \sqrt{\mu(v_*)} \dd\sigma \dd v_* \, .
\end{align}
Using the elementary inequality $|v-v_*| \le \langle v \rangle \langle v_* \rangle$, since $\beta\geq 1/8$ we get
\begin{align}
    |\Gamma(F_1, F_2)(v)| 
    &\leq 2 \mathfrak{F}_1 \mathfrak{F}_2 \langle v \rangle \mu_\beta(v) \int_{\RR^2 \times \mathbb{S}^1} \langle v_*\rangle e^{-9|v_*|^2/32} \dd\sigma \dd v_* \, ,
\end{align}
which yields the desired bound after integrating above.
\end{proof}

\subsection{On the collision constant ${\sf c}_1$}\label{sct:collision constant}
Here we verify that the constant ${\sf c}_1$ appearing in \eqref{def:Buin} does not
vanish for the hard-sphere collision operator. Below $C\neq 0$ is a constant that can change from line to line. Evaluating the tensor $\mathcal{B}^\eps(u)$ in \eqref{def:Buin} at  $u=e_1$, we see that
\begin{equation}\label{eq:c1_traceless_test}
    \mathsf c_1
    =C
    \eps(\mathcal B^\eps_{11}(e_1)-\mathcal B^\eps_{22}(e_1))
    =C
    \int_{\mathbb R^2}\underbrace{(v_1^2-v_2^2)}_{:=\phi(v)}\,
    Q(v_1\mu,v_1\mu)(v)\,dv .
\end{equation}
 By the weak formulation of the Boltzmann
collision operator and by symmetrization in \((v,v_*)\), we have
\[
\mathsf c_1
=
C\int |v-v_*|\mu(v)\mu(v_*)v_1v_{*,1}
\bigl[\phi(v')+\phi(v_*')-\phi(v)-\phi(v_*)\bigr]
\,d\sigma\,dv_*\,dv ,
\]
for some $C\neq0$.
Setting
\[
p=\frac{v+v_*}{2},\qquad w=v-v_*,
\]
we have
\[
v_1v_{*,1}=p_1^2-\frac14w_1^2.
\]
and, since \(p\) is conserved by collisions, 
\[
v=p+\frac w2,\qquad v_*=p-\frac w2,\qquad
v'=p+\frac{w'}2,\qquad v_*'=p-\frac{w'}2,
\]
where \(w'=v'-v_*'=|w|\sigma\). Moreover, note that for any $a$ we get
\[
\phi(p+a)+\phi(p-a)=2\phi(p)+2\phi(a).
\]
Therefore
\[
\phi(v')+\phi(v_*')-\phi(v)-\phi(v_*)
=
\frac12\bigl(\phi(w')-\phi(w)\bigr),
\]
and, up to changing $C\neq0$,
\[
\mathsf c_1
=
C\int |w|\tilde{\mu}(p)\sqrt{\mu}(w)
\left(p_1^2-\frac14w_1^2\right)
\bigl(\phi(w')-\phi(w)\bigr)
\,d\sigma\,dp\,dw ,
\] 
where $\tilde{\mu}(p)=e^{-|p|^2}/(\pi\sqrt{2\pi})$.
Since $w'=|w|\sigma$, we know that
\[
\int_{\mathbb S^1}\phi(w')\,d\sigma=0.
\]
Therefore
\[
\int_{\mathbb S^1}(\phi(w')-\phi(w))\,d\sigma
=
-|\mathbb S^1|\phi(w).
\]
The contribution of the $p_1^2$ term vanishes by  symmetries in $w$.
Consequently,
\[
\mathsf c_1
=
C\int_{\mathbb R^2}
|w|\sqrt{\mu}(w)w_1^2(w_1^2-w_2^2)\,dw,
\]
for another constant $C\neq0$. Passing to polar coordinates gives
\[
|\mathsf c_1|
=
|C|
\left(\int_0^\infty r^6\sqrt{\mu}(r)\,dr\right)
\left(\int_0^{2\pi}
\cos^2\theta(\cos^2\theta-\sin^2\theta)\,d\theta\right)>0
\]
as desired.

{\bf Acknowledgements.}  $ $ MD was supported by the Swiss National Science Foundation (SNF Ambizione grant PZ00P2\_223294). IG was  supported by the BOURGEONS project, grant ANR-23-CE40-0014-01 of the French National Research Agency (ANR).

\end{document}

%% file: macros.tex
\def\dd{{\rm d}}

\def\eps{\varepsilon}
\def\epsilon{\varepsilon}
\def\e{{\rm e}} 
\def\de{{\partial}}

\def\RR {\mathbb{R}}
\def\R {\mathbb{R}} 

\newcommand{\cE}{\mathcal{E}}

\newcommand{\cL}{\mathcal{L}}

\def\virgp{\raise 2pt\hbox{,}}